\xpatchcmd{\proof}{\itshape}{\normalfont\bfseries}{}{}
\newtheoremstyle{repeat}{}{}{\itshape}{}{\bfseries}{.}{.5em}{#3, repeated}
\newtheorem{theorem}{Theorem}[section]
\newtheorem{proposition}[theorem]{Proposition}
\newtheorem{lemma}[theorem]{Lemma}
\newtheorem{corollary}[theorem]{Corollary}
\newtheorem{fact}[theorem]{Fact}
\theoremstyle{definition}
\newtheorem{definition}[theorem]{Definition}
\newtheorem{remark}[theorem]{Remark}
\newtheorem{convention}[theorem]{Convention}
\newtheorem{example}[theorem]{Example}
\theoremstyle{repeat}
\newtheorem*{repeated-theorem}{Repeat}
\newcommand{\E}{\mathcal{E}}
\newcommand{\F}{\mathcal{F}}
\newcommand{\B}{\mathcal{B}}
\newcommand{\C}{\mathcal{C}}
\newcommand{\D}{\mathcal{D}}
\renewcommand{\L}{\mathcal{L}}
\newcommand{\g}{{\operatorname{g}}}
\newcommand{\sfo}{{\operatorname{sfo}}}
\newcommand{\fo}{{\operatorname{fo}}}
\renewcommand{\b}{{\operatorname{b}}}
\newcommand{\SStr}[1][\Sigma]{#1 \text{--} \mathbf{Str}} 
\newcommand{\TMod}[1][T]{#1 \text{--} \mathbf{Mod}} 
\newcommand{\Set}{\mathbf{Set}}
\newcommand{\SetFO}{\mathbf{Set}^\fo}
\newcommand{\SetSFO}{\mathbf{Set}^\sfo}
\newcommand{\SetB}{\mathbf{Set}^\b}
\newcommand{\SynG}{\mathbf{Syn}^\g}
\newcommand{\SynFO}{\mathbf{Syn}^\fo}
\newcommand{\SynSFO}{\mathbf{Syn}^\sfo}
\newcommand{\SynB}{\mathbf{Syn}^\b}
\newcommand{\SynConsB}{\mathbf{SynCons}^\b}
\newcommand{\Sh}{\mathbf{Sh}}
\newcommand{\Geom}{\mathbf{Geom}}
\newcommand{\Heyt}{\mathbf{Heyt}}
\newcommand{\SubHeyt}{\mathbf{SubHeyt}}
\newcommand{\FlatCon}{\mathbf{FlatCon}}
\newcommand{\Topos}{\mathbf{Topos}}
\newcommand{\Open}{\mathbf{Open}}
\newcommand{\SubOpen}{\mathbf{SubOpen}}
\renewcommand{\Im}{\operatorname{Im}}
\newcommand{\Sub}{\operatorname{Sub}}
\renewcommand{\phi}{\varphi}
\newcommand{\op}{{\textup{op}}}
\title{Classifying toposes for non-geometric theories}
\date{\today}
\author{Mark Kamsma}
\thanks{The author is supported by the EPSRC grant EP/X018997/1.}
\email[Mark Kamsma]{mark@markkamsma.nl}
\urladdr[Mark Kamsma]{https://markkamsma.nl}
\address[Mark Kamsma]{School of Mathematical Sciences, Queen Mary University of London, London, E1 4NS, UK}
\begin{document}

\begin{abstract}
The classifying topos of a geometric theory is a topos such that geometric morphisms into it correspond to models of that theory. We study classifying toposes for different infinitary logics: first-order, sub-first-order (i.e.\ geometric logic plus implication) and classical. For the first two, the corresponding notion of classifying topos is given by restricting the use of geometric morphisms to open and sub-open geometric morphisms respectively. For the last one, we restrict ourselves to Boolean toposes instead. Butz and Johnstone proved that the first-order classifying topos of an infinitary first-order theory exists precisely when that theory does not have too many infinitary first-order formulas, up to intuitionistically provable equivalence. We prove similar statements for the sub-first-order and Boolean case. Along the way we obtain completeness results of infinitary sub-first-order logic and infinitary classical logic with respect to (Boolean) toposes.
\end{abstract}

\maketitle

\tableofcontents

\section{Introduction}
\label{sec:introduction}
A classifying topos for a geometric theory $T$ is a (Grothendieck) topos $\Set[T]$ such that geometric morphisms $f: \E \to \Set[T]$ correspond to models of $T$ internal to $\E$. The correspondence is given by having a generic model $G_T$ in $\Set[T]$ and then the model in $\E$ associated to $f: \E \to \Set[T]$ is $f^*(G_T)$. This unveils the reason for restricting ourselves to geometric logic: that is exactly the fragment of logic that is preserved by the inverse image part $f^*$ of geometric morphisms $f$.

Some geometric morphisms preserve more than just geometric logic, and by restricting the definition of a classifying topos to those geometric morphisms we get a notion of classifying topos for a different fragment of logic. For example, open geometric morphisms are exactly those whose inverse image part preserves full (infinitary) first-order logic ($\L_{\infty, \omega}$ to be precise). Butz and Johnstone \cite{butz_classifying_1998} then defined the \emph{first-order classifying topos} $\SetFO[T]$ for a theory $T$ in infinitary first-order logic to be such that open geometric morphisms $f: \E \to \SetFO[T]$ correspond to models of $T$ in $\E$. Once again, $f: \E \to \SetFO[T]$ corresponds to $f^*(G_T)$, where $G_T$ is the generic model of $T$ in $\SetFO[T]$.

It is well-known that for geometric theories the classifying topos always exists. It turns out that this is not the case for first-order theories. The problem is that there can simply be too many different infinitary first-order formulas, up to provable equivalence in the intuitionistc proof system. By ``too many'' we mean a proper class. We will see in more detail why this is an issue (\thref{classifying-toposes-counterexamples}). The intuition is that the subobjects of the generic model in a first-order classifying topos correspond to equivalence classes of infinitary first-order formulas, and since toposes are well-powered we have that there can only be a set of infinitary first-order formulas up to provable equivalence (modulo the theory $T$). Butz and Johnstone proved that this is in fact the only obstruction to the existence of a first-order classifying topos. So a theory $T$ in infinitary first-order logic has a first-order classifying topos if and only if there is only a set of infinitary first-order formulas up to provable equivalence modulo $T$.

In this paper we build on the work of Butz and Johnstone to prove similar theorems for different fragments of logic: a fragment that we call infinitary sub-first-order logic and infinitary classical first-order logic. Infinitary sub-first-order logic is obtained by adding implication to geometric logic, and this is exactly the fragment that is preserved by the inverse image parts of sub-open geometric morphisms. The notion of sub-first-order classifying topos is then obtained by restricting to sub-open geometric morphisms. The notion of a Boolean classifying topos is obtained by restricting the toposes to Boolean ones, which automatically makes all the geometric morphisms involved open. The existence of these kinds of classifying topos again depends on not having too many formulas, in the relevant logic and modulo the relevant deduction-system.

All these different classifying toposes can be constructed in analogous ways. First we construct a syntactic category, roughly whose objects will be formulas in the relevant logic and arrows will be formulas in the relevant logic that provably (in the relevant deduction-system, modulo the theory) encode the graph of a function. We wish to consider the topos of sheaves on such a syntactic category, for which it needs to be small. To achieve this, we restrict ourselves to formulas in $\L_{\kappa, \omega}$ for some $\kappa$. If there is only a set of formulas, up to provable equivalence, then we can take this $\kappa$ big enough such that the relevant kind of classifying topos is the topos of sheaves on the syntactic category. Even if there are too many formulas, we still have use for this construction in proving completeness theorems for the various fragments of logic.

The proofs and construction for the sub-first-order classifying topos closely follow the strategy of Butz and Johnstone. The strategy for the Boolean classifying topos is different, even though the result looks very similar. This is mainly because we do not restrict the type of geometric morphisms in this case, but the type of topos that we consider. So the main challenge here is to prove that the topos of sheaves on the relevant syntactic category is Boolean.

\textbf{Convention.} We only consider Grothendieck toposes in this paper. So whenever we say ``topos'', a Grothendieck topos is meant.

\textbf{Main results.} We leave the precise statements of the main results to the text, where the appropriate definitions have been made. Here we sum up the essence of the main results.
\begin{itemize}
\item \thref{intuitionistic-completeness-theorems} (intuitionistic completeness): let $T$ be an infinitary sub-first-order theory, then if an infinitary sub-first-order sequent is valid in every model of $T$ in every topos then it is deducible from $T$ in the infinitary sub-first-order deduction-system. A similar statement is true for infinitary first-order logic and geometric logic (which are included in \thref{intuitionistic-completeness-theorems} for completeness' sake), but those are not new in this paper.
\item \thref{classical-completeness-theorem} (classical completeness): let $T$ be an infinitary first-order theory, then if an infinitary first-order sequent is valid in every model of $T$ in every Boolean topos then it is deducible from $T$ in the infinitary classical deduction-system.
\item \thref{sub-first-order-classifying-topos-existence} (existence of sub-first-order classifying toposes): let $T$ be an infinitary sub-first-order theory, then the sub-first-order classifying topos $\SetSFO[T]$ exists iff there is only a set of infinitary sub-first-order formulas, up to provable equivalence in the infinitary sub-first-order deduction-system modulo $T$.
\item \thref{topos-as-sub-first-order-classifying-topos}: every topos is the sub-first-order classifying topos of some infinitary sub-first-order theory $T$.
\item \thref{boolean-classifying-topos-existence} (existence of Boolean classifying toposes): let $T$ be an infinitary first-order theory, then the first-order classifying topos $\SetB[T]$ exists iff there is only a set of infinitary first-order formulas, up to provable equivalence in the infinitary classical deduction-system modulo $T$.
\item \thref{boolean-topos-as-boolean-classifying-topos}: every Boolean topos is the Boolean classifying topos of some geometric theory $T$.
\item \thref{omega-categorical}: this is a characterisation of those finitary first-order theories that have a coherent Boolean classifying topos. Much of this was already known from \cite{blass_boolean_1983}, but we would like to point out one interesting new consequence. Let $T$ be a finitary first-order theory (i.e.\ a first-order theory in the usual sense) in a countable language. If $T$ is $\omega$-categorical then every infinitary first-order formula is provably equivalent to a finitary first-order formula, modulo $T$ in the infinitary classical deduction-system.
\item In Section \ref{sec:relating-the-various-classifying-toposes} we relate the various notions of classifying topos by showing how to construct one from another (\thref{relating-classifying-toposes}). We also characterise when the classifying topos of a geometric theory is Boolean in \thref{for-geometric-theories}, yielding an infinitary version of the main result in \cite{blass_boolean_1983}.
\end{itemize}

\textbf{Acknowledgements.} This paper is based on the author's Master's thesis \cite{kamsma_classifying_2018}, and as such the author would like to thank the supervisor Jaap van Oosten for all his input and feedback at the time. The author would also like to thank Ming Ng for helpful discussions.
\section{The different logics}
\label{sec:the-different-logics}
We will often restrict ourselves to things of size $< \kappa$ for some cardinal $\kappa$. We will often allow for $\kappa = \infty$, which means that we remove any bounds on size.
\begin{definition}
\thlabel{first-order-formulas}
As usual, a (multi-sorted) signature $\Sigma$ (with equality) will consist of a set of sorts, relation symbols, constant symbols and function symbols. These can then be used to form terms and atomic formulas as usual. For a cardinal $\kappa$ or $\kappa = \infty$ we recall that the class of first-order $\L_{\kappa, \omega}$-formulas is the smallest class that is closed under:
\begin{enumerate}[label=(\arabic*)]
\item atomic formulas,
\item finitary conjunctions and disjunctions (including $\top$ and $\bot$),
\item implication $\to$,
\item disjunctions of size $< \kappa$ with finitely many free variables,
\item conjunctions of size $< \kappa$ with finitely many free variables,
\item existential quantification (over finite strings of variables),
\item universal quantification (over finite strings of variables).
\end{enumerate}
We will call a formula in this class a \emph{$\kappa$-first-order formula}.

The smallest class that is closed under (1), (2), (4) and (6) will be called the class of \emph{$\kappa$-geometric formulas}. The smallest class that is closed under (1), (2), (3), (4) and (6) will be called the class of \emph{$\kappa$-sub-first-order formulas}.
\end{definition}
The above definition is mostly standard (see e.g.\ \cite[Definition D1.1.3]{johnstone_sketches_2002_2}), just the notion of ``sub-first-order'' is new.
\begin{convention}
\thlabel{infty-renaming}
The different logics from \thref{first-order-formulas} will have similarly named concepts related to them (such as formulas, theories and deduction-systems). For each of these we adapt the following naming convention. We will just say ``geometric'' instead of ``$\infty$-geometric'' (e.g.\ ``geometric formula'' instead of ``$\infty$-geometric formula''). However, for ``(sub-)first-order'' this can be confusing as those traditionally refer to finitary formulas, so we will then explicitly say ``infinitary (sub-)first-order'' instead of ``$\infty$-(sub-)first-order'' (e.g.\ ``infintary first-order formula'' instead of ``$\infty$-first-order formula'').
\end{convention}
\begin{definition}
\thlabel{context}
By a \emph{context} we mean a string of variables, and its corresponding string of sorts will also be called a \emph{sort}. By a \emph{formula in context} we mean a pair $\phi(x)$, where $x$ is a string of variables and $\phi$ a formula with its free variables among those in $x$. Given a list $t$ of terms, whose sort matches that of $x$, we write $\phi[t/x]$ for the formula where each term in the list $t$ is substituted for the corresponding free variable in $x$ in $\phi$.
\end{definition}
We recall the basics of categorical semantics and establish terminology and notation along the way.
\begin{definition}
\thlabel{regular-category}
A category $\C$ is called \emph{regular} if it is finitely complete, has coequalisers of kernel pairs and regular epimorphisms are stable under pullback.
\end{definition}
We have taken the definition from \cite{butz_regular_1998}. There is a different treatment in \cite[Section A1.3]{johnstone_sketches_2002_1}. The important part is that for any $f: X \to Y$ in a regular category we can define a functor $f^*: \Sub(Y) \to \Sub(X)$ by pulling back along $f$. Furthermore, this functor has a left adjoint $\exists_f: \Sub(X) \to \Sub(Y)$, arising from the (regular epi, mono)-factorisation system in a regular category.
\begin{definition}
\thlabel{geometric-heyting-boolean-category}
Let $\kappa$ be a cardinal or $\infty$. A well-powered regular category $\C$ is called:
\begin{itemize}
\item \emph{$\kappa$-geometric} if it is regular and subobject posets have joins of size $<\kappa$, which are stable under pullback;
\item \emph{$\kappa$-sub-Heyting} if it is $\kappa$-geometric where the subobject posets are Heyting algebras and for any arrow $f: X \to Y$ in $\C$ the operation $f^*: \Sub(Y) \to\Sub(X)$ is a Heyting algebra homomorphism;
\item \emph{$\kappa$-Heyting} if it is $\kappa$-sub-Heyting, the subobject posets have meets of size $< \kappa$, which are stable under pullback, and for any arrow $f: X \to Y$ in $\C$ the functor $f^*: \Sub(Y) \to \Sub(X)$ has a right adjoint $\forall_f: \Sub(X) \to \Sub(Y)$;
\item \emph{$\kappa$-Boolean} if it is $\kappa$-Heyting and the subobject posets are Boolean algebras.
\end{itemize}
\end{definition}
Each kind of category from \thref{geometric-heyting-boolean-category} allows for interpretation of the similarly named logic. We assume the reader is familiar with categorical semantics (see e.g.\ \cite[Section D1.2]{johnstone_sketches_2002_2}) and we merely establish notation.
\begin{definition}
\thlabel{categorical-semantics}
Let $\kappa$ be a cardinal or $\infty$, let $\C$ be a $\kappa$-geometric (resp.\ $\kappa$-(sub-)Heyting) category and let $\Sigma$ be a signature. Let $M$ be a \emph{$\Sigma$-structure} in $\C$. For a sort $X$ we write $X^M$ for its interpretation in $M$ (recall we also use the term sort for a, possibly empty, string of sorts). A $\kappa$-geometric (resp.\ $\kappa$-(sub-)first-order) formula in context $\phi(x)$ will then be interpreted as a subobject $\{x : \phi(x)\}^M$ of $X^M$, where $X$ is the sort of $x$.
\end{definition}
\begin{definition}
\thlabel{geometric-heyting-boolean-functor}
Let $\kappa$ be a cardinal or $\infty$.
\begin{itemize}
\item A functor $F: \C \to \D$ between $\kappa$-geometric categories is called \emph{$\kappa$-geometric} if it preserves finite limits, regular epimorphisms and joins of size $< \kappa$.
\item A functor $F: \C \to \D$ between $\kappa$-sub-Heyting categories is called \emph{$\kappa$-sub-Heyting} if is $\kappa$-geometric and preserves Heyting implication on the subobject posets.
\item A functor $F: \C \to \D$ between $\kappa$-Heyting categories is called \emph{$\kappa$-Heyting} if is $\kappa$-geometric, preserves meets of size $< \kappa$ and commutes with the $\forall_{(-)}$ operation.
\end{itemize}
We write $\Geom_\kappa(\C, \D)$ (resp.\ $\SubHeyt_\kappa(\C, \D)$ or $\Heyt_\kappa(\C, \D)$) for the category of $\kappa$-geometric (resp.\ $\kappa$-sub-Heyting or $\kappa$-Heyting) functors between $\C$ and $\D$, with natural transformations as arrows.
\end{definition}
\begin{example}
\thlabel{topos-infty-heyting}
Any topos $\E$ is an $\infty$-Heyting category and it is $\infty$-Boolean iff $\E$ is a Boolean topos. The inverse image part of a geometric morphism between toposes is a geometric functor. Geometric functors whose inverse image part is $\infty$-(sub-)-Heyting will be considered in \thref{kinds-of-geometric-morphisms}.
\end{example}
\begin{remark}
\thlabel{geometric-heyting-boolean-functor-remarks}
We make a few remarks about \thref{geometric-heyting-boolean-functor}.
\begin{enumerate}[label=(\roman*)]
\item Being a $\kappa$-geometric functor implies that the functor $F$ commutes with the $\exists_{(-)}$ operation. That is, for any $f: X \to Y$ in $\C$ and $A \leq X$ we have $F(\exists_f(A)) = \exists_{F(f)}(F(A))$.
\item Any $\kappa$-Heyting functor is $\kappa$-sub-Heyting, because the implication operation can be built from the $\forall_{(-)}$ operation as follows. Let $A, B \leq X$ and denote by $a$ a monomorphism representing $A$. Then $A \to B = \forall_a(A \wedge B)$.
\item By a functor $F: \C \to \D$ that ``commutes with the $\forall_{(-)}$ operation'' we mean, like in point (i), that for all $f: X \to Y$ in $\C$ and $A \leq X$ we have $F(\forall_f(A)) = \forall_{F(f)}(F(A))$.
\end{enumerate}
\end{remark}
\begin{fact}[{\cite[Lemma D1.2.13]{johnstone_sketches_2002_2}}]
\thlabel{functor-preserves-formulas}
Let $\kappa$ be a cardinal or $\infty$ and let $\Sigma$ be a signature. Suppose that $F: \C \to \D$ is a $\kappa$-geometric (resp.\ $\kappa$-(sub-)Heyting) functor between $\kappa$-geometric (resp.\ $\kappa$-(sub-)Heyting) categories. Then given a $\Sigma$-structure $M$ in $\C$ we naturally find a $\Sigma$-structure $F(M)$ in $\D$ and $F$ preserves interpretations of $\kappa$-geometric (resp.\ $\kappa$-(sub-)first-order) formulas. That is, for any such formula in context $\phi(x)$ we have
\[
F(\{x : \phi(x)\}^M) = \{x : \phi(x)\}^{F(M)}.
\]
\end{fact}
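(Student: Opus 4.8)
The plan is a structural induction on $\phi$, preceded by fixing the structure $F(M)$ and a lemma on terms. Because $F$ preserves finite limits it preserves monomorphisms, so $F(M)$ is defined by $X^{F(M)} = F(X^M)$ on sorts, $g^{F(M)} = F(g^M)$ on function and constant symbols, and $R^{F(M)} = F(R^M)$ on relation symbols, the last being a subobject of $F(X^M) = X^{F(M)}$ since $F$ sends the defining monomorphism $R^M \to X^M$ to a monomorphism. A preliminary induction on terms then shows $F(t^M) = t^{F(M)}$ for every term in context: a variable is interpreted by a product projection and a function application by a composite $g^M \circ \langle s_1^M, \dots, s_m^M \rangle$, all of which $F$ preserves since it preserves finite products and commutes with the $g^M$ by construction.

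With terms settled I would run the main induction, reading off each clause of \thref{first-order-formulas} against the matching clause of \thref{geometric-heyting-boolean-functor}. For atomic formulas, $\{x : R(t_1, \dots, t_n)\}^M$ is the pullback of $R^M$ along $\langle t_1^M, \dots, t_n^M \rangle$ and $\{x : s = t\}^M$ is an equaliser; both are finite limits, hence preserved by the term lemma together with finite-limit preservation. Finitary $\wedge$ and $\top$ are finite meets (pullbacks), finitary $\vee$ and $\bot$ are finite joins, and disjunctions of size $< \kappa$ are joins of size $< \kappa$, all preserved by any $\kappa$-geometric functor; existential quantification is handled by \thref{geometric-heyting-boolean-functor-remarks}(i) via $\{x : \exists y\, \phi\}^M = \exists_\pi(\{xy : \phi\}^M)$ for the appropriate projection $\pi$. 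This closes the geometric case. For the sub-first-order case I add the clause for implication, which is preserved because a $\kappa$-sub-Heyting functor preserves Heyting implication on subobject posets. For the first-order case I further add conjunctions of size $< \kappa$ (preservation of $< \kappa$ meets) and universal quantification (commutation with $\forall_{(-)}$, \thref{geometric-heyting-boolean-functor-remarks}(iii)).

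The only point demanding real care, and the place I expect to spend effort, is the bookkeeping of contexts. A formula whose free variables form a proper subset of $x$ is interpreted in context $x$ by pulling back its interpretation in a smaller context along a projection, and the quantifier and substitution steps silently reindex contexts. I would forestall this by stating and proving the preservation equation uniformly over all contexts at once, so that every reindexing is a pullback along a projection and is therefore automatically preserved by $F$; the required compatibility of $\exists_\pi$ and $\forall_\pi$ with such pullbacks is precisely the pullback-stability (Beck--Chevalley) already demanded in \thref{geometric-heyting-boolean-category}. Beyond this organisational subtlety the argument is a routine case-by-case check, each connective being matched to exactly one preservation property of $F$, so no single case presents a genuine difficulty.
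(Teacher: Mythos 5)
The paper gives no proof of this statement at all: it is imported as a \emph{Fact}, citing Lemma D1.2.13 of Johnstone's \emph{Sketches of an Elephant}, and the proof there is exactly the argument you outline (define $F(M)$ by applying $F$ to the interpreting data, prove a term lemma, then induct on the structure of $\phi$, matching each connective of \thref{first-order-formulas} to the corresponding preservation property in \thref{geometric-heyting-boolean-functor}, with context reindexing handled by pullback-stability). Your proposal is correct and essentially coincides with the proof of the cited result, so there is nothing to add.
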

\begin{proposition}
\thlabel{kappa-geometric-from-boolean-is-kappa-heyting}
Let $\kappa$ be a cardinal or $\infty$. Any $\kappa$-geometric functor $F: \C \to \D$ from a $\kappa$-Boolean category $\C$ to a $\kappa$-Heyting category $\D$ is a $\kappa$-Heyting functor.
\end{proposition}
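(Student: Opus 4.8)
The plan is to verify the two conditions that upgrade the given $\kappa$-geometric functor $F$ to a $\kappa$-Heyting one: that $F$ preserves meets of size $<\kappa$, and that it commutes with the $\forall_{(-)}$ operation. By \thref{geometric-heyting-boolean-functor-remarks}(i) we already know $F$ commutes with $\exists_{(-)}$, and as a $\kappa$-geometric functor it preserves finite limits (hence finite meets of subobjects and the top subobject), the bottom subobject, and joins of size $<\kappa$ (in particular all finite joins). So the work is to produce the missing meets and universal quantifiers, and the whole strategy is to transport the Boolean De Morgan identities available in $\C$ across $F$.

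The crucial preliminary step is to show that $F$ preserves complements. Fix $A \leq X$ in $\C$; since $\C$ is $\kappa$-Boolean, $A$ has a complement $\neg A$ characterised by $A \wedge \neg A = \bot$ and $A \vee \neg A = \top$. Applying $F$ and using that it preserves finite meets, the bottom and top subobjects, and binary joins, I obtain $F(A) \wedge F(\neg A) = \bot$ and $F(A) \vee F(\neg A) = \top$ in $\Sub(F(X))$. Thus $F(\neg A)$ is a complement of $F(A)$ in the Heyting algebra $\Sub(F(X))$; since a Heyting algebra is distributive, complements are unique and agree with the pseudocomplement, so $F(\neg A) = \neg F(A)$, where $\neg$ now denotes the negation of $\D$.

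With complement-preservation in hand I would exploit the Boolean identities in $\C$. For meets, $\bigwedge_i A_i = \neg \bigvee_i \neg A_i$ holds in $\C$; applying $F$ and using complement-preservation together with preservation of joins of size $<\kappa$ rewrites the right-hand side as $\neg \bigvee_i \neg F(A_i)$ in $\D$. It then remains to check that this element is genuinely the meet $\bigwedge_i F(A_i)$ in $\D$. Each $F(A_i)$ is complemented, so $\neg \bigvee_i \neg F(A_i)$ is a lower bound; and if $C$ is any lower bound of the $F(A_i)$, then $C \wedge \neg F(A_i) = \bot$ for every $i$, whence $C \wedge \bigvee_i \neg F(A_i) = \bot$ by stability of joins under pullback (i.e.\ distributivity in $\D$), giving $C \leq \neg \bigvee_i \neg F(A_i)$. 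For the universal quantifier I would start from $\forall_f A = \neg \exists_f \neg A$ in $\C$; applying $F$, and using that it commutes with $\exists_{(-)}$ and preserves complements, gives $\neg \exists_{F(f)} \neg F(A)$. I would then verify, by the analogous adjunction bookkeeping now also invoking that each pullback functor $f^*$ is a Heyting algebra homomorphism in the $\kappa$-sub-Heyting category $\D$, that $\neg \exists_{F(f)} \neg F(A) = \forall_{F(f)} F(A)$, the complementedness of $F(A)$ being exactly what is needed to convert $g^*B \wedge \neg F(A) = \bot$ back into $g^*B \leq F(A)$.

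The main obstacle is precisely that $\D$ is only assumed $\kappa$-Heyting, not $\kappa$-Boolean, so the De Morgan rewritings valid in $\C$ do not transfer verbatim. The point that makes everything go through is that all subobjects in the image of $F$ are complemented in $\D$: on complemented elements the pseudocomplement behaves like a genuine Boolean complement, and this, combined with distributivity (joins stable under pullback) and the fact that the pullback functors $f^*$ of $\D$ are Heyting homomorphisms, is exactly enough to identify $\neg \bigvee_i \neg F(A_i)$ with the meet and $\neg \exists_{F(f)} \neg F(A)$ with the universal quantifier. I expect these two identifications to be the only nonroutine parts; once they are established, $F$ satisfies all the defining clauses of a $\kappa$-Heyting functor.
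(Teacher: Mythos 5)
Your proposal is correct and takes essentially the same route as the paper's own proof: both arguments first establish that $F$ preserves complements (via preservation of finite meets, joins, top and bottom) and then transport the Boolean identities $\bigwedge_i A_i = \neg \bigvee_i \neg A_i$ and $\forall_f A = \neg \exists_f \neg A$ from $\C$ along $F$. The only difference is bookkeeping: the paper sandwiches each preservation claim between a trivial inequality and a general Heyting-algebra inequality, whereas you verify directly that the transported expressions satisfy the universal properties of the meet and of $\forall_{F(f)}$ in $\D$, using complementedness of the image subobjects together with distributivity and Frobenius; both verifications go through.
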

\begin{proof}
Let $A \leq X$ be a subobject in $\C$. Then $F(A) \vee F(\neg A) = F(A \vee \neg A) = F(X)$ and $F(A) \wedge F(\neg A) = F(A \wedge \neg A) = F(0) = 0$, so we have $F(\neg A) = \neg F(A)$.

We first check that $F$ preserves meets of size $< \kappa$. Let $\{A_i\}_{i \in I}$ be a set of subobjects of $X$ in $\C$ with $|I| < \kappa$. Note that $F(\bigwedge_{i \in I} A_i) \leq F(A_i)$ for all $i \in I$ and so $F(\bigwedge_{i \in I} A_i) \leq \bigwedge_{i \in I} F(A_i)$. For the other direction we use that $F$ preserves complements and joins:
\[
\bigwedge_{i \in I} F(A_i) \leq
\neg \bigvee_{i \in I} \neg F(A_i) =
F \left( \neg \bigvee_{i \in I} \neg A_i \right) =
F \left( \bigwedge_{i \in I} A_i \right),
\]
where the first inequality is generally true in any Heyting algebra.

We are left to check that for any arrow $f: X \to Y$ in $\C$ the functor $F$ preserves $\forall_f$. So let $A \leq X$ and note that
\[
F(f)^*(F(\forall_f(A))) = F(f^*(\forall_f(A))) \leq F(A),
\]
where the equality is because $F$ preserves pullbacks and the inequality is $F$ applied to the co-unit of $f^* \dashv \forall_f$. So $F(\forall_f(A)) \leq \forall_{F(f)}(F(A))$. For the other direction we use that $F$ preserves complements and images:
\[
\forall_{F(f)}(F(A)) \leq
\neg \exists_{F(f)} \neg F(A) =
F(\neg \exists_f(\neg A)) =
F(\forall_f(A)),
\]
where the first inequality is a general fact for adjoints between Heyting algebras.
\end{proof}
Suppose that $M$ and $N$ are $\Sigma$-structures in some $\kappa$-geometric category $\C$ and that for each sort $X$ of $\Sigma$ we have an arrow $h_X: X^M \to X^N$ such that whenever $X = S_1, \ldots, S_n$ is a list of sorts we have $h_X = h_{S_1} \times \ldots \times h_{S_n}$. We recall from \cite[Lemma D1.2.9]{johnstone_sketches_2002_2} that $h: M \to N$ is a \emph{homomorphism} if for all $\kappa$-geometric formulas $\phi(x)$ in context we have
\[
\{x : \phi(x)\}^M \leq h_X^*(\{x : \phi(x)\}^N).
\]
Following \cite[Section 2]{butz_classifying_1998} we define stronger notions of morphisms between structures, depending on the fragment of logic they preserve.
\begin{definition}
\thlabel{elementary-morphisms}
Let $\kappa$ be a cardinal or $\infty$, let $\C$ be a $\kappa$-sub-Heyting (resp.\ $\kappa$-Heyting) category and let $\Sigma$ be a signature. We call a homomorphism $h: M \to N$ of $\Sigma$-structures in $\C$ a \emph{$\kappa$-sub-elementary morphism} (resp.\ \emph{$\kappa$-elementary morphism}) if for all $\kappa$-sub-first-order (resp.\ $\kappa$-first-order) formulas in context $\phi(x)$ we have
\[
\{ x : \phi(x) \}^M \leq h_X^*(\{ x : \phi(x) \}^N).
\]
We write $\SStr(\C)$ (resp.\ $\SStr(\C)_{\kappa \to}$ or $\SStr(\C)_{\kappa}$) for the category of all $\Sigma$-structures in $\C$ with homomorphisms (resp.\ $\kappa$-sub-elementary or $\kappa$-elementary) embeddings between them.
\end{definition}
Note that $h$ is a homomorphism iff the inequality in the above definition holds for all atomic formulas. There is thus no real dependence on $\kappa$, which is why there is no subscript in $\SStr(\C)$. However, the notion of $\kappa$-(sub-)elementary morphism does depend on $\kappa$, and so the subscript matters: \cite[Example 2.2]{butz_classifying_1998} gives an example of an $\omega$-elementary morphism that is not $\omega_1$-elementary. We also refer to \cite[Example 2.1]{butz_classifying_1998} to note that an $\infty$-elementary morphism need not be injective, which explains why they are not named ``$\infty$-elementary embeddings''.
\begin{definition}
\thlabel{sequents-and-theories}
Let $\kappa$ be a cardinal or $\infty$. A \emph{$\kappa$-geometric} (resp.\ \emph{$\kappa$-(sub-)first-order}) \emph{sequent} is an expression of the form $\phi(x) \vdash_x \psi(x)$, where $x$ is a context and $\phi(x)$ and $\psi(x)$ $\kappa$-geometric (resp.\ $\kappa$-(sub-)first-order) are formulas in context $x$. A \emph{$\kappa$-geometric} (resp.\ \emph{$\kappa$-(sub-)first-order}) \emph{theory} is a set of $\kappa$-geometric (resp.\ $\kappa$-(sub-)first-order) sequents.

We say that a sequent $\phi(x) \vdash_x \psi(x)$ is \emph{valid} in a structure $M$ if $\{x : \phi(x)\}^M \leq \{x : \psi(x)\}^M$. We say that $M$ is a \emph{model} of a theory $T$ if every sequent in $T$ is valid in $M$. For an appropriate category $\C$ we write $\TMod(\C)$ (resp.\ $\TMod(\C)_{\kappa \to}$ or $\TMod(\C)_\kappa$) for the full subcategory of $\SStr(\C)$ (resp.\ $\SStr(\C)_{\kappa \to}$ or $\SStr(\C)_{\kappa}$) whose objects are models of $T$.
\end{definition}
\begin{proposition}
\thlabel{functor-preserves-models}
Let $\kappa$ be a cardinal or $\infty$ and let $F: \C \to \D$ be a functor.
\begin{itemize}
\item If $F$ is $\kappa$-geometric then for any $\kappa$-geometric theory $T$ it induces a functor $F: \TMod(\C) \to \TMod(\D)$.
\item If $F$ is $\kappa$-sub-Heyting then for any $\kappa$-sub-first-order theory $T$ it induces a functor $F: \TMod(\C)_{\kappa \to} \to \TMod(\D)_{\kappa \to}$.
\item If $F$ is $\kappa$-Heyting then for any $\kappa$-first-order theory $T$ it induces a functor $F: \TMod(\C)_\kappa \to \TMod(\D)_\kappa$.
\end{itemize}
\end{proposition}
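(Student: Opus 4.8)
The plan is to treat all three cases uniformly, since they differ only in which class of formulas we feed into \thref{functor-preserves-formulas}. The essential ingredients are already available. \thref{functor-preserves-formulas} tells us that a functor of the relevant kind sends a $\Sigma$-structure $M$ in $\C$ to a $\Sigma$-structure $F(M)$ in $\D$ and that it preserves interpretations of formulas in the relevant fragment, i.e.\ $F(\{x:\phi(x)\}^M) = \{x:\phi(x)\}^{F(M)}$. Two further properties of $F$ that I would use throughout are that $F$ induces a monotone map $\Sub(X) \to \Sub(F(X))$ (because $F$ preserves monomorphisms and composition, so $A \leq B$ implies $F(A) \leq F(B)$), and that $F$ commutes with pullback of subobjects, i.e.\ $F(f^*(A)) = F(f)^*(F(A))$, which holds because $F$ preserves finite limits.

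First I would check that $F$ sends models to models. Let $T$ be a theory in the relevant fragment and let $M$ be a model in $\C$. For any sequent $\phi(x) \vdash_x \psi(x)$ in $T$ we have $\{x:\phi(x)\}^M \leq \{x:\psi(x)\}^M$ in $\Sub(X^M)$. Applying the monotone map induced by $F$ and then \thref{functor-preserves-formulas} gives
\[
\{x:\phi(x)\}^{F(M)} = F(\{x:\phi(x)\}^M) \leq F(\{x:\psi(x)\}^M) = \{x:\psi(x)\}^{F(M)},
\]
so the sequent is valid in $F(M)$ and hence $F(M)$ is a model of $T$.

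Next I would check that $F$ sends a morphism $h : M \to N$ of the relevant kind to a morphism of the same kind. I define the components of $F(h)$ by $F(h)_X := F(h_X)$; compatibility with products, namely $F(h_X) = F(h_{S_1}) \times \cdots \times F(h_{S_n})$ for $X = S_1, \ldots, S_n$, follows since $F$ preserves finite products. To verify the defining inequality, recall that $h$ being a homomorphism (resp.\ $\kappa$-sub-elementary, resp.\ $\kappa$-elementary) means $\{x:\phi(x)\}^M \leq h_X^*(\{x:\phi(x)\}^N)$ for every atomic (resp.\ $\kappa$-sub-first-order, resp.\ $\kappa$-first-order) formula $\phi(x)$. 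Applying $F$, and using monotonicity, \thref{functor-preserves-formulas}, and the commutation of $F$ with pullback, I obtain
\[
\{x:\phi(x)\}^{F(M)} = F(\{x:\phi(x)\}^M) \leq F(h_X^*(\{x:\phi(x)\}^N)) = F(h)_X^*(\{x:\phi(x)\}^{F(N)}),
\]
which is exactly the condition making $F(h)$ a morphism of the required kind. For the homomorphism case it suffices to verify this for atomic formulas, as remarked after \thref{elementary-morphisms}.

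Finally, functoriality of $F$ on these categories of models is routine: $F(\operatorname{id}_M) = \operatorname{id}_{F(M)}$ and $F(h' \circ h) = F(h') \circ F(h)$ both hold componentwise because $F$ is a functor. I do not expect any genuine obstacle; the whole argument is a direct bookkeeping exercise on top of \thref{functor-preserves-formulas}, and the only point requiring a little care is ensuring that in each of the three cases the functor $F$ is of exactly the type needed for \thref{functor-preserves-formulas} to apply to the formula fragment appearing in the definition of the corresponding category of models.
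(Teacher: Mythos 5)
Your proof is correct and takes exactly the approach the paper intends: the paper's own proof is the one-line remark that the result follows from writing out definitions using \thref{functor-preserves-formulas}, and your argument is precisely that bookkeeping spelled out (monotonicity of $F$ on subobjects, commutation with pullbacks from finite-limit preservation, and preservation of formula interpretations). No gaps; nothing further is needed.
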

\begin{proof}
This is just writing out definitions, using \thref{functor-preserves-formulas}.
\end{proof}

Each of the different logics allows for a sequent-style deduction-system with rules for each of its allowed connectives. This is also where yet another logic appears: classical infinitary first-order logic. So far the distinction between the different logic has been in their expressive power, but their intended deductive power is intuitionistic (as that is the internal logic of toposes). Classical infinitary first-order logic then distinguishes itself by having greater deductive power.
\begin{definition}
\thlabel{deduction-systems}
Let $\kappa$ be a cardinal or $\infty$. Referring to the deduction rules outlined in \cite[Definition D1.3.1]{johnstone_sketches_2002_2}, we define the follows deduction-systems.
\begin{itemize}
\item The \emph{$\kappa$-geometric deduction-system} contains the structural rules (a), equality rules (b), rules for finitary conjunction (c) and for infinitary disjunction for disjunctions of size $< \kappa$ (h), rules for existential quantification (f) and the infinitary distributive axiom and Frobenius axiom (i).
\item The \emph{$\kappa$-sub-first-order deduction-system} expands the $\kappa$-geometric deduction-system by also allowing the rules for implication (e).
\item The \emph{$\kappa$-first-order deduction-system} expands the $\kappa$-sub-first-order deduction-system by also allowing the rules for infinitary conjunction for conjunctions of size $< \kappa$ (h) and rules for universal quantification (g).
\item The \emph{$\kappa$-classical deduction-system} expands the $\kappa$-first-order deduction-system by allowing the law of excluded middle (or equivalently: double negation elimination), see e.g.\ \cite[Remark D1.3.3]{johnstone_sketches_2002_2}.
\end{itemize}
\end{definition}
\begin{fact}[Soundness, {\cite[Proposition D1.3.2]{johnstone_sketches_2002_2}}]
\thlabel{soundness}
Let $\kappa$ be a cardinal or $\infty$. The $\kappa$-geometric (resp.\ $\kappa$-(sub-)first-order or $\kappa$-classical) deduction-system is sound for $\kappa$-geometric (resp.\ $\kappa$-(sub-)Heyting or $\kappa$-Boolean) categories. That is, if $M$ is a structure in such a category that satisfies a set of sequents $T$ and the sequent $\sigma$ is deducible from $T$ using the relevant deduction-system then $\sigma$ is valid in $M$.
\end{fact}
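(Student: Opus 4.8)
The plan is to proceed by induction on the length of a derivation of $\sigma$ from $T$ in the relevant deduction-system, showing at each step that validity in $M$ is preserved. The essential point is that every rule of the deduction-system corresponds to a structural property of the subobject posets that is guaranteed by the defining axioms of the relevant class of category in \thref{geometric-heyting-boolean-category}, so the whole argument reduces to matching rules against those properties.

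Before running the induction I would establish the substitution lemma: for a formula in context $\phi(y)$ and a list of terms $t$ of matching sort interpreted by the arrow $t^M$, the interpretation satisfies
\[
\{x : \phi[t/y]\}^M = (t^M)^*(\{y : \phi(y)\}^M).
\]
This is itself proved by induction on the structure of $\phi$, using that the pullback functor $(t^M)^*$ preserves finite limits, joins of size $< \kappa$, meets of size $< \kappa$ and Heyting implication, and commutes with the adjoints $\exists_{(-)}$ and $\forall_{(-)}$ — which is exactly what the stability-under-pullback and Heyting-homomorphism clauses of \thref{geometric-heyting-boolean-category} supply. This lemma is what makes the rules involving substitution (cut, the equality rules, and the quantifier rules) sound.

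For the induction itself the base case splits in two: sequents lying in $T$ are valid in $M$ by the hypothesis that $M$ is a model of $T$, while the logical axioms (reflexivity of $\vdash$, the equality axioms, the distributive and Frobenius axioms, and, in the classical case, excluded middle) are valid in any structure in the corresponding category directly from its defining properties. For the inductive step I would go rule by rule: the structural rules reduce to reflexivity and transitivity of $\leq$ together with the substitution lemma; the rules for $\wedge$, $\top$, $\bigvee$ and $\bot$ are the universal properties of finite meets and of joins of size $< \kappa$; the rules for $\exists$ use the adjunction $\exists_f \dashv f^*$; the rules for $\to$ use that the subobject posets are Heyting algebras and that $f^*$ is a Heyting homomorphism (for the sub-first-order fragment); the rules for $\bigwedge$ and $\forall$ use meets of size $< \kappa$ and the adjunction $f^* \dashv \forall_f$ (for the first-order fragment); and excluded middle uses that the subobject posets are Boolean algebras (for the classical fragment).

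I expect the main obstacle to be precisely the interaction of substitution with quantification, the place where a naive soundness proof breaks. The soundness of the existential and universal rules requires that $\exists_{(-)}$ and $\forall_{(-)}$ commute with pullback (a Beck--Chevalley condition), and the soundness of the geometric existential rule additionally requires Frobenius reciprocity; these are exactly the stability clauses and the Frobenius axiom invoked in \thref{geometric-heyting-boolean-category} and \thref{deduction-systems}. Once the substitution lemma is in hand and these conditions are applied at the right moments, the remaining verifications are routine, and indeed the statement is \cite[Proposition D1.3.2]{johnstone_sketches_2002_2}.
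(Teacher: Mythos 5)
Your proposal is correct and matches the paper's treatment: the paper states this as a Fact citing \cite[Proposition D1.3.2]{johnstone_sketches_2002_2} and remarks only that the proof is by induction on the proof tree, which is exactly the standard argument you outline (substitution lemma plus rule-by-rule matching of deduction rules to the categorical structure of \thref{geometric-heyting-boolean-category}). Your observation about why disjunctions and conjunctions must be restricted to size $< \kappa$ is also precisely the point the paper makes in the discussion immediately following the statement.
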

The proof of \thref{soundness} is by induction on the proof tree. This is why we restricted our deduction-systems to disjunctions (and conjunctions) of size $< \kappa$, because otherwise the relevant induction step cannot be executed in a $\kappa$-geometric (or $\kappa$-Heyting / $\kappa$-Boolean) category. We will later see that this is not a genuine restriction on the strength of the deduction-system in \thref{kappa-deduction-is-infinitary}.
\section{Definitions of classifying toposes}
\label{sec:definitions-of-classifying-toposes}
The definition below is not the original definition \cite[Definition 1.1]{johnstone_open_1980}. We use equivalent characterisations in logical terms that are directly useful to us (see \cite[Lemma 3.1 and Theorem 3.2]{johnstone_open_1980}).
\begin{definition}
\thlabel{kinds-of-geometric-morphisms}
For a geometric morphism $f: \E \to \F$ we use the standard notation of $f^*: \F \to \E$ for its inverse image part (we will have no use for the direct image parts in this paper). We call such a geometric morphism:
\begin{itemize}
\item \emph{sub-open} if it preserves implication on the subobject posets,
\item \emph{open} if it commutes with the $\forall_{(-)}$ operation.
\end{itemize}
We write $\Topos(\E, \F)$ for the category of geometric morphisms from $\E$ to $\F$, with as arrows natural transformations between the inverse image parts. We write $\SubOpen(\E, \F)$ and $\Open(\E, \F)$ for the full subcategories of sub-open and open geometric morphisms respectively.
\end{definition}
It follows that every open geometric morphism preserves infinitary meets of subobjects as well (see e.g.\ \cite[Theorem IX.6.3]{maclane_sheaves_1994}).
\begin{fact}
\thlabel{open-sub-open-geometric-morphism-is-heyting-sub-heyting}
Let $f: \E \to \F$ be a (sub-)open geometric morphism. Then its inverse image part $f^*$ is $\infty$-(sub-)Heyting.
\end{fact}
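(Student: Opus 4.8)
The plan is to treat this statement as a direct unwinding of \thref{geometric-heyting-boolean-functor}: for each of the two cases I would simply verify that $f^*$ satisfies the defining clauses of an $\infty$-sub-Heyting (resp.\ $\infty$-Heyting) functor. The one input I would quote from outside this bookkeeping is that $f^*$ is already an $\infty$-geometric functor, as recorded in \thref{topos-infty-heyting}; this takes care of preservation of finite limits, regular epimorphisms and arbitrary joins in both cases. Note also that $\E$ and $\F$ are toposes and hence $\infty$-Heyting categories, so the very notion of an $\infty$-(sub-)Heyting functor makes sense for $f^*$.

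For the sub-open case there is then almost nothing left. By \thref{geometric-heyting-boolean-functor} an $\infty$-geometric functor between $\infty$-sub-Heyting categories is $\infty$-sub-Heyting as soon as it preserves Heyting implication on the subobject posets. But that is exactly what the definition of a sub-open geometric morphism in \thref{kinds-of-geometric-morphisms} asserts about $f^*$, so the required condition matches the hypothesis verbatim and the sub-open case is complete.

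For the open case I would check the two remaining clauses of the definition of an $\infty$-Heyting functor, namely preservation of arbitrary meets of subobjects (reading ``meets of size $< \kappa$'' with $\kappa = \infty$ as all meets) and commutation with the $\forall_{(-)}$ operation. The latter is immediate, since by \thref{kinds-of-geometric-morphisms} an open geometric morphism is by definition one whose inverse image commutes with $\forall_{(-)}$. For the former I would invoke the fact recorded immediately before the statement that an open geometric morphism preserves infinitary meets of subobjects. Together with $\infty$-geometricity these give that $f^*$ is $\infty$-Heyting.

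Consequently I expect no genuine obstacle here: both cases reduce to matching the definition of a (sub-)open morphism against the definition of a (sub-)Heyting functor. The only ingredient that is not purely formal is the preservation of infinitary meets in the open case, which is why it is pulled in as an external fact rather than derived in place; everything else follows by comparing definitions.
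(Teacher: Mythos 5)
Your proposal is correct and matches the paper's intent exactly: the paper states this as a fact without proof precisely because, with the logical characterisations adopted in \thref{kinds-of-geometric-morphisms}, both cases reduce to matching those definitions against \thref{geometric-heyting-boolean-functor}, with the preservation of infinitary meets by open morphisms supplied by the remark (citing Mac Lane--Moerdijk, Theorem IX.6.3) placed immediately before the statement. You identified and used all three ingredients ($\infty$-geometricity of $f^*$, the defining clause of (sub-)openness, and the external fact about infinitary meets) in the same way the paper does.
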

\begin{fact}[{\cite[Corollary 3.5]{johnstone_open_1980}}]
\thlabel{boolean-iff-every-map-into-is-open}
The following are equivalent for a topos $\E$:
\begin{enumerate}[label=(\roman*)]
\item $\E$ is Boolean,
\item every geometric morphism into $\E$ is open,
\item every geometric morphism into $\E$ is sub-open.
\end{enumerate}
\end{fact}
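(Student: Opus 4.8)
The plan is to prove the cycle (i) $\Rightarrow$ (ii) $\Rightarrow$ (iii) $\Rightarrow$ (i), of which the first two implications are short consequences of results already in hand and the last is the substantive one. Throughout I use that the inverse image part $f^*$ of a geometric morphism $f : \F \to \E$ is an $\infty$-geometric functor and that a topos is $\infty$-Boolean exactly when it is Boolean (\thref{topos-infty-heyting}).

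For (i) $\Rightarrow$ (ii), let $f : \F \to \E$ be any geometric morphism. Then $f^* : \E \to \F$ is $\infty$-geometric, its domain $\E$ is $\infty$-Boolean by hypothesis, and its codomain $\F$ is $\infty$-Heyting since every topos is. Hence \thref{kappa-geometric-from-boolean-is-kappa-heyting} with $\kappa = \infty$ applies and shows $f^*$ is $\infty$-Heyting; in particular $f^*$ commutes with $\forall_{(-)}$, which is precisely the definition of $f$ being open (\thref{kinds-of-geometric-morphisms}). For (ii) $\Rightarrow$ (iii), if $f$ is open then $f^*$ is $\infty$-Heyting (\thref{open-sub-open-geometric-morphism-is-heyting-sub-heyting}), hence $\infty$-sub-Heyting by \thref{geometric-heyting-boolean-functor-remarks}(ii), i.e.\ it preserves implication on subobject posets, which is exactly the definition of $f$ being sub-open.

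The main step is (iii) $\Rightarrow$ (i), and here the key idea is to find one geometric morphism into $\E$ that detects complementation. I would invoke Barr's covering theorem to obtain a surjective geometric morphism $p : \G \to \E$ with $\G$ a Boolean topos. By hypothesis (iii), $p$ is sub-open, so $p^*$ preserves negation: $p^*(\neg A) = \neg\, p^* A$ for every subobject $A \leq X$ in $\E$. Since $\G$ is Boolean, $\neg\, p^* A$ is a genuine complement of $p^* A$, so $p^* A \vee \neg\, p^* A = \top_{p^* X}$; substituting and using that $p^*$ preserves finite joins and the top subobject yields $p^*(A \vee \neg A) = \top_{p^* X} = p^*(\top_X)$. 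Finally, a geometric surjection has faithful, and hence conservative, inverse image, so the monomorphism $A \vee \neg A \hookrightarrow X$, being sent by $p^*$ to an isomorphism, is itself an isomorphism; that is, $A \vee \neg A = \top_X$. As $A \wedge \neg A = \bot_X$ holds in any Heyting algebra, $\neg A$ is a complement of $A$. Every subobject of $\E$ is therefore complemented, so $\E$ is Boolean.

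The only genuine obstacle I anticipate is isolating a single geometric morphism into $\E$ witnessing the failure of Booleanness. Points are unavailable, since a Grothendieck topos need not have enough of them, and the double-negation subtopos inclusion, although it has Boolean domain, is not a surjection and so its inverse image is not conservative and cannot reflect the equalities above. Barr's covering theorem is exactly the point-free substitute: it supplies a surjective (hence conservative) cover by a Boolean topos, which is what makes the reflection argument run. The remaining checks — that $f^*$ always satisfies $f^*(\neg A) \leq \neg f^* A$ so that sub-openness is really about the reverse inequality, that $p^*$ preserves $\vee$ and $\top_X$, and that a faithful inverse image is conservative (via faithful functors reflecting monos and epis together with mono $+$ epi $=$ iso in a topos) — are routine.
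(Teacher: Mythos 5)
The paper does not actually prove this statement: it is imported as a Fact from \cite[Corollary 3.5]{johnstone_open_1980}, with only the remark that \thref{kappa-geometric-from-boolean-is-kappa-heyting} gives an alternative, logic-based proof of (i) $\Rightarrow$ (ii). Your proposal is therefore necessarily a different route, and it is correct. Your (i) $\Rightarrow$ (ii) is precisely the alternative proof the paper alludes to, and (ii) $\Rightarrow$ (iii) follows, as you say, from \thref{open-sub-open-geometric-morphism-is-heyting-sub-heyting} together with \thref{geometric-heyting-boolean-functor-remarks}(ii). The substantive leg (iii) $\Rightarrow$ (i) via Barr's theorem is sound: the Barr cover $p: \G \to \E$ with $\G$ Boolean is sub-open by hypothesis, so $p^*$ preserves negation (it preserves $\to$ and, being geometric, $\bot$), giving $p^*(A \vee \neg A) = p^*A \vee \neg p^*A = \top$; since $p$ is a surjection, $p^*$ is faithful, hence reflects monos and epis, and balancedness of toposes then makes $p^*$ conservative, so $A \vee \neg A = \top_X$ in $\E$. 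Your side remark is also on target: the $\neg\neg$-subtopos inclusion cannot play the role of $p$ — it is sub-open with Boolean domain (indeed the paper uses \cite[Corollary 1.8]{johnstone_open_1980} to this effect in \thref{boolean-topos-with-syntactic-model}) but is not surjective unless it is an equivalence, so its inverse image reflects nothing. The trade-off relative to Johnstone's original treatment is that you buy the hard direction with Barr's covering theorem, a nontrivial external input but one this paper already invokes (for the localic refinement of \thref{classical-completeness-theorem}), whereas \cite{johnstone_open_1980} derives the corollary internally from its theory of open maps and the Boolean core.
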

We note that \thref{kappa-geometric-from-boolean-is-kappa-heyting} is an alternative, logic-based, proof of (i) $\Rightarrow$ (ii) in the above fact.
\begin{definition}
\thlabel{classifying-toposes-def}
We define four different notions of classifying topos.
\begin{itemize}
\item For a geometric theory $T$, the \emph{classifying topos} $\Set[T]$ is a topos such that for every topos $\E$ there is an equivalence
\[
\Topos(\E, \Set[T]) \simeq \TMod(\E).
\]
\item For an infinitary sub-first-order theory $T$, the \emph{sub-first-order classifying topos} $\SetSFO[T]$, if it exists, is a topos such that for every topos $\E$ there is an equivalence
\[
\SubOpen(\E, \SetSFO[T]) \simeq \TMod(\E)_{\infty \to}.
\]
\item For an infinitary first-order theory $T$, the \emph{first-order classifying topos} $\SetFO[T]$, if it exists, is a topos such that for every topos $\E$ there is an equivalence
\[
\SubOpen(\E, \SetFO[T]) \simeq \TMod(\E)_\infty.
\]
\item For an infinitary first-order theory $T$, the \emph{Boolean classifying topos} $\SetB[T]$, if it exists, is a Boolean topos such that for every Boolean topos $\E$ there is an equivalence
\[
\Topos(\E, \SetB[T]) \simeq \TMod(\E)_\infty.
\]
\end{itemize}
Furthermore, in each case we require there to be a model $G_T$ of $T$ in the appropriate classifying topos, called the \emph{generic model}, such that the equivalence is given by sending a (sub-open / open) geometric morphism $f$ to the model $f^*(G_T)$ and a natural transformation $\eta: f \to g$ is sent to the homomorphism $f^*(G_T) \to g^*(G_T)$ whose component at sort $X$ is $\eta_{X^{G_T}}: X^{f^*(G_T)} \to X^{g^*(G_T)}$.
\end{definition}
\begin{remark}
\thlabel{classifying-toposes-def-remarks}
We make some remarks about \thref{classifying-toposes-def}.
\begin{enumerate}[label=(\roman*)]
\item The definition of a classifying topos for geometric theories is standard (see e.g.,\ \cite[Proposition D3.1.12]{johnstone_sketches_2002_2} or \cite[Theorem 2.1.8]{caramello_theories_2017}). The definition of a first-order classifying topos is that from \cite{butz_classifying_1998}.
\item For geometric theories we know that the classifying topos always exists. For each of the other kinds this may not be the case, as emphasised in the definition, see \thref{classifying-toposes-counterexamples}.
\item In the definition of the Boolean classifying topos we consider geometric morphisms. Since geometric morphisms into a Boolean topos are always open (\thref{boolean-iff-every-map-into-is-open}), we can equivalently define the Boolean classifying topos $\SetB[T]$ as one such that for every Boolean topos we have:
\[
\Open(\E, \SetB[T]) \simeq \TMod(\E)_\infty.
\]
\item We speak about ``the'' (Boolean or (sub-)first-order) classifying topos because it straightforwardly follows from the definition that such a topos is unique up to equivalence (if it exists).
\item The requirement at the end of the definition is equivalent to requiring that the equivalences are natural in a 2-categorical sense, see \cite[Remark 2.1.7(a)]{caramello_theories_2017}.
\end{enumerate}
\end{remark}
As pointed out in \cite{butz_classifying_1998}, the problem with the existence of a first-order classifying topos is that there can be too many infinitary first-order formulas (up to equivalence). The same kind of problem arises for sub-first-order and Boolean classifying toposes. We expand their example to these cases.
\begin{fact}[\cite{de_jongh_class_1980}]
\thlabel{proper-class-of-intuitionistic-formulas}
There is a proper class of propositional infinitary sub-first-order formulas on two propositional variables, such that no two of these formulas are equivalent modulo the infinitary first-order deduction-system.
\end{fact}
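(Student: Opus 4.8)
The plan is to deduce non-equivalence from semantics. By soundness (\thref{soundness}) with $\kappa = \infty$, if two formulas are interderivable in the infinitary first-order deduction-system, then they receive equal interpretations in every model in every $\infty$-Heyting category. In particular, for a poset $\mathbb{P}$ the presheaf topos $[\mathbb{P}, \Set]$ of $\Set$-valued functors is a topos, hence an $\infty$-Heyting category (\thref{topos-infty-heyting}), and a propositional structure there is exactly a Kripke valuation: an assignment of an up-set of $\mathbb{P}$ to each propositional variable, with the interpretation of a formula computed by the usual Kripke forcing clauses (infinitary disjunction being interpreted as union of up-sets). Consequently, two propositional formulas $\phi, \psi$ on $p, q$ fail to be equivalent modulo the infinitary first-order deduction-system as soon as there is a single Kripke model in which their forcing differs at some world. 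It therefore suffices to exhibit a proper-class-indexed family of propositional infinitary sub-first-order formulas on $p, q$ together with, for each pair, a separating Kripke model.

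Next I would construct the family by transfinite recursion, following de Jongh \cite{de_jongh_class_1980}. One sets $\phi_0 = \bot$, defines $\phi_{\alpha+1}$ from $\phi_\alpha$ and the two variables $p, q$ by a fixed intuitionistic ``ladder'' operation built from $\wedge$, $\vee$ and $\to$, and at limit stages takes the infinitary disjunction $\phi_\lambda = \bigvee_{\alpha < \lambda} \phi_\alpha$; the last step is legitimate precisely because infinitary disjunction is allowed in sub-first-order logic (clause (4) of \thref{first-order-formulas}). The aim is to make $(\phi_\alpha)_{\alpha \in \mathrm{Ord}}$ a strictly increasing chain in the entailment preorder, so that the $\phi_\alpha$ are automatically pairwise inequivalent and, being indexed by all ordinals, form a proper class.

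The verification step is to produce, for each pair $\alpha < \beta$, a Kripke model whose root forces $\phi_\alpha$ but not $\phi_\beta$: concretely a well-founded $p,q$-coloured frame whose height is tuned to $\beta$, so that the ladder formulas detect ordinal depth along a branch and the limit disjunctions detect ``having reached a limit level''. By the reduction of the first paragraph this separates $\phi_\alpha$ from $\phi_\beta$ in the infinitary first-order deduction-system, and in fact shows they are not even interderivable.

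The main obstacle is the combinatorial core: choosing the successor operation and the accompanying models so that the chain provably never stabilises at any ordinal, so that each successor and each limit disjunction is strictly stronger than all earlier stages. This is exactly where two propositional variables are essential: on a single variable the analogous construction yields only set-many formulas up to equivalence (the one-generator free Heyting algebra, the Rieger--Nishimura lattice, is countable, and its infinitary completions remain a set), so the chain is forced to stabilise; the richer free Heyting algebra on two generators is what makes the strict growth possible at every ordinal. The semantic reduction and the limit-stage bookkeeping are routine by comparison, and the weight of the argument lies in de Jongh's construction, which we import.
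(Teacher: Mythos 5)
This statement is imported by the paper as a \textbf{Fact}, with no proof beyond the citation to \cite{de_jongh_class_1980}, and your proposal ultimately rests on the same citation: the Kripke/presheaf soundness reduction you set up is correct (and is indeed the right way to pass from propositional intuitionistic inequivalence to inequivalence modulo the full infinitary first-order deduction-system), but the entire combinatorial core --- the strictly increasing, proper-class-length chain of formulas --- is deferred to de Jongh, exactly as the paper defers it. So your approach is essentially the paper's; just note that your aside about one propositional variable (that any such chain must stabilise there, via the Rieger--Nishimura lattice and its ``infinitary completions'') is an extra claim that neither the paper nor your argument substantiates, so it should not be asserted as fact.
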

\begin{fact}[{\cite{gaifman_infinite_1964, hales_non-existence_1964}, with a simpler proof in \cite{solovay_new_1966}}]
\thlabel{proper-class-of-classical-formulas}
There is a proper class of propositional infinitary first-order formulas on a countable infinity of propositional variables, such that no two of these formulas are equivalent modulo the infinitary classical deduction-system.
\end{fact}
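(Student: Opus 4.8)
The plan is to first recast the statement in terms of complete Boolean algebras and then invoke the Gaifman--Hales theorem in Solovay's form. Fix the propositional variables $p_0, p_1, \ldots$ and let $L$ be the Lindenbaum--Tarski algebra whose elements are the equivalence classes $[\phi]$ of propositional infinitary first-order formulas in these variables under provable equivalence in the infinitary classical deduction-system, ordered by $[\phi] \leq [\psi]$ iff $\phi \vdash \psi$ is deducible. I would first verify that $L$ is a (possibly proper-class-sized) complete Boolean algebra: the finite connectives give the lattice structure, the law of excluded middle makes it Boolean, and any family $\{[\phi_i]\}_{i \in I}$ has join $[\bigvee_{i \in I} \phi_i]$ and meet $[\bigwedge_{i \in I} \phi_i]$ by the infinitary disjunction and conjunction rules (this is exactly where the absence of a size bound, $\kappa = \infty$, is used). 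A proper class of pairwise inequivalent formulas is then the same thing as $L$ failing to be a set, so it suffices to prove the latter.

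Next I would identify $L$ as the free complete Boolean algebra on the generators $[p_0], [p_1], \ldots$. Given any complete Boolean algebra $C$ and any sequence $(c_n)_{n \in \omega}$ in $C$, interpreting each formula by the evident recursion --- sending $p_n$ to $c_n$, the propositional connectives to the corresponding Boolean operations, and $\bigvee, \bigwedge$ to suprema and infima --- assigns a value in $C$ to every formula; soundness of the infinitary classical deduction-system (\thref{soundness}), valid since a complete Boolean algebra is an $\infty$-Boolean category, guarantees that provably equivalent formulas receive equal values, so this descends to a complete Boolean homomorphism $L \to C$ with $[p_n] \mapsto c_n$, unique because the $[p_n]$ generate $L$ under the complete Boolean operations. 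Hence $L$ is a set if and only if the free complete Boolean algebra on countably many generators exists.

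The heart of the matter is that this free algebra is in fact a proper class. A first attempt already gives a clean lower bound: with $C = \mathcal{P}(2^\omega)$ and $[p_n] \mapsto \{f \in 2^\omega : f(n) = 1\}$, the $2^{2^{\aleph_0}}$ classes $[\bigvee_{A \in \mathcal{A}}(\bigwedge_{n \in A} p_n \wedge \bigwedge_{n \notin A} \neg p_n)]$, indexed by $\mathcal{A} \subseteq \mathcal{P}(\omega)$, map to the pairwise distinct sets $\{f_A : A \in \mathcal{A}\}$ (where $f_A$ is the characteristic function of $A$), so $|L| \geq 2^{2^{\aleph_0}}$. The obstacle is that atomic algebras cannot be pushed past this: a countable set of generators separates at most $2^{\aleph_0}$ points of any set $X$, so the complete subalgebra it generates inside $\mathcal{P}(X)$ --- equivalently the image of the homomorphism $L \to \mathcal{P}(X)$ --- has size at most $2^{2^{\aleph_0}}$. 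The genuine content, which I would cite from Gaifman--Hales and Solovay rather than reprove, is to replace power sets by non-atomic complete Boolean algebras coming from forcing (regular-open algebras of $2^\kappa$ and related constructions), where for each cardinal $\kappa$ a suitable assignment of the generators makes the image of $L \to C$ have cardinality at least $\kappa$; the universal property then forces $|L| \geq \kappa$ for all $\kappa$, so $L$ is a proper class. This set-theoretic step is the single hard part, everything before it being soft Lindenbaum--Tarski bookkeeping.
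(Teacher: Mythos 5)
Your proposal is correct and takes essentially the paper's approach: the paper gives no proof of this Fact beyond citing Gaifman--Hales and Solovay, and the hard set-theoretic core you explicitly defer to those references (complete Boolean algebras completely generated by countably many elements of arbitrarily large cardinality) is exactly what the paper defers as well. The Lindenbaum--Tarski bookkeeping you spell out, identifying the class of propositional formulas modulo infinitary classical provability with the would-be free complete Boolean algebra on countably many generators, is the standard translation left implicit by the paper's citation, and your soundness and image-is-a-complete-subalgebra steps are sound.
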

\begin{example}
\thlabel{classifying-toposes-counterexamples}
We give examples of theories for which the sub-first-order, first-order and Boolean classifying toposes cannot exist.
\begin{enumerate}[label=(\roman*)]
\item Let $T$ be the empty theory in the signature that consists of two propositional variables. Suppose, for a contradiction, that $\SetSFO[T]$ exists. Let $\kappa$ be a cardinal and let $H_\kappa$ be a complete Heyting algebra on two generators containing at least $\kappa$ many inequivalent (in the infinitary first-order deduction-system) sub-first-order formulas. Such a Heyting algebra exists by \thref{proper-class-of-intuitionistic-formulas}. Considering $H_\kappa$ as a locale, we obtain the localic topos $\Sh(H_\kappa)$. In this topos we have that $\Sub(1) \cong H_\kappa$, so it contains $H_\kappa$ as a model of $T$. That means there must be a sub-open-geometric morphism $f_\kappa: \Sh(H_\kappa) \to \SetSFO[T]$ such that $f_\kappa^*(G_T) = H_\kappa$, and in particular we find a function $f_\kappa^*: \Sub_{\SetSFO[T]}(1) \to H_\kappa$ that preserves interpretations of infinitary sub-first-order propositional formulas on our two propositional variables. By our choice of $H_\kappa$, the image of $f_\kappa^*: \Sub_{\SetSFO[T]}(1) \to H_\kappa$ must at least have cardinality $\kappa$, so we have that $\Sub_{\SetSFO[T]}(1)$ must have at least cardinality $\kappa$. As $\kappa$ was arbitrary, we find that $\Sub_{\SetSFO[T]}(1)$ is a proper class, which it cannot be.
\item The case for $\SetFO[T]$ is analogous to point (i), and is exactly the example considered in \cite{butz_classifying_1998}.
\item Take $T$ to be the empty theory in the signature that consists of a countable infinity of propositional variables. Then $\SetB[T]$ does not exist. The proof of this is analogous to point (i), only we use \thref{proper-class-of-classical-formulas} instead of \thref{proper-class-of-intuitionistic-formulas}. Two more details should be noted. Firstly, we can take $H_\kappa$ to be a complete Boolean algebra instead, ensuring that $\Sh(H_\kappa)$ is Boolean. Secondly, the geometric morphisms involved are open (see e.g.\ \thref{classifying-toposes-def-remarks}(iii)), so their inverse image parts preserve infinitary first-order formulas.
\end{enumerate}
\end{example}
We finish this section by recalling a simplified version of the continuous version of Diaconecu's theorem, which will be essential in constructing the various kinds of classifying toposes (see e.g.\ \cite[Corollary VII.7.4]{maclane_sheaves_1994}).
\begin{fact}[Continuous version of Diaconescu's theorem]
\thlabel{continuous-diaconescu}
Let $\C$ be a small category with finite limits and let $J$ be a Grothendieck topology on $\C$. Then there is an equivalence of categories
\[
\Topos(\E, \Sh(\C, J)) \simeq \FlatCon((\C, J), \E).
\]
Here $\FlatCon((\C, J), \E)$ is the category of finite limit preserving functors $\C \to \E$ that are $J$-continuous (i.e.\ sending covering sieves to epimorphic families).

Furthermore, if $J$ is subcanonical (i.e.\ representable presheaves are sheaves) then this correspondence is given by sending a geometric morphism $f$ to $f^* y$, where $y: \C \to \Sh(\C, J)$ is the Yoneda embedding.
\end{fact}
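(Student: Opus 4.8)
The plan is to deduce this from the ordinary (non-continuous) Diaconescu theorem for the presheaf topos $[\C^\op, \Set]$, cutting down to the subtopos $\Sh(\C, J)$. First I would recall that geometric morphisms $\E \to [\C^\op, \Set]$ correspond naturally to flat functors $\C \to \E$, the correspondence sending $f$ to the composite $f^* y$ of its inverse image with the Yoneda embedding $y \colon \C \to [\C^\op, \Set]$ and having as inverse the left Kan extension $\operatorname{Lan}_y$ along $y$. Since $\C$ has finite limits, flatness of a functor out of $\C$ is equivalent to preservation of finite limits, so this already gives $\Topos(\E, [\C^\op, \Set]) \simeq \Flat(\C, \E)$ with $\Flat(\C, \E)$ identified with the finite-limit-preserving functors.

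Next I would use that $\Sh(\C, J)$ sits inside $[\C^\op, \Set]$ as a subtopos, via the geometric morphism whose inverse image is sheafification $a$ and whose direct image is the inclusion $i$. A general property of subtoposes is that a geometric morphism $f \colon \E \to [\C^\op, \Set]$ factors (uniquely up to canonical isomorphism) through $\Sh(\C, J)$ precisely when $f^*$ inverts every morphism that $a$ inverts, equivalently when $f^*$ carries each covering-sieve inclusion $S \rightarrowtail y(c)$ to an isomorphism. Restricting the correspondence from the previous paragraph to such $f$ will therefore yield the equivalence $\Topos(\E, \Sh(\C, J)) \simeq \FlatCon((\C, J), \E)$, once I verify that this factorization condition on $f$ matches exactly the $J$-continuity of $M = f^* y$.

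The crux is thus to show that $f^*$ inverts every covering-sieve inclusion if and only if $M = f^* y$ is $J$-continuous. Here I would unwind $f^* = \operatorname{Lan}_y M$: for a sieve $S$ on an object $c$, the presheaf $S$ is the colimit in $[\C^\op, \Set]$ of the representables $y(c')$ indexed by the arrows $(c' \to c)$ lying in $S$, and since $f^*$ is cocontinuous it sends the inclusion $S \rightarrowtail y(c)$ to the canonical comparison map $\operatorname{colim}_{(c' \to c) \in S} M(c') \to M(c)$. As $f^*$ is left exact this comparison is monic, so $f^*$ inverts the inclusion exactly when the comparison is moreover epimorphic, which says precisely that the family $\{M(c') \to M(c)\}$ is epimorphic, i.e.\ that $M$ is $J$-continuous. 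I expect this identification---pinning down $f^*(S)$ as the relevant colimit and matching the inversion condition with the epimorphic-family condition for all covers---to be the main technical point, as it is where the defining property of sheafification (localizing precisely at the covering sieves) enters.

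Finally, for the subcanonical addendum I would observe that when $J$ is subcanonical every representable presheaf is already a sheaf, so $a y \cong y$ and the Yoneda embedding corestricts to $y \colon \C \to \Sh(\C, J)$. Writing the subtopos inclusion as the geometric morphism with inverse image $a$, a geometric morphism $g \colon \E \to \Sh(\C, J)$ composes with it to the morphism $f$ into $[\C^\op, \Set]$ having $f^* = g^* a$; hence $f^* y = g^*(a y) = g^* y$, so under the restricted correspondence $g$ is sent to $g^*$ applied to the corestricted Yoneda embedding, as claimed.
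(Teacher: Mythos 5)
The paper offers no proof of this statement at all: it is recorded as a \textbf{Fact}, with a pointer to \cite[Corollary VII.7.4]{maclane_sheaves_1994}. Your argument is correct and is essentially the proof given in that cited reference --- ordinary Diaconescu's theorem for the presheaf topos $[\C^\op,\Set]$, followed by the factorization criterion for the subtopos $\Sh(\C,J)$, with the crux (that $f^*$ inverts the covering-sieve inclusions $S \rightarrowtail y(c)$ if and only if $f^*y$ sends covers to epimorphic families) handled exactly as there, via writing $S$ as a colimit of representables and using left exactness plus balancedness of the topos.
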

\section{Syntactic categories}
\label{sec:syntactic-categories}
The construction of syntactic categories is standard, see e.g.\ \cite[Section D1.4]{johnstone_sketches_2002_2}. We recall the construction here, mainly to establish our notation and terminology. We first give the general recipe below and then make it precise in \thref{syntactic-categories} for the various logics that we consider.

Fix a theory $T$, a class $F$ of formulas and a deduction-system $S$. A syntactic category for $T$ has as objects equivalence classes $[\phi(x)]$ of formulas in context, where the formulas are taken from $F$. Two such formulas $\phi(x)$ and $\phi'(x')$ are considered to be equivalent if $x$ and $x'$ are of the same sort and $\phi = \phi'[x/x']$. In defining arrows between $[\phi(x)]$ and $[\psi(y)]$ we can then assume that $x$ and $y$ have no variables in common. An arrow $[\phi(x)] \to [\psi(y)]$ is then a $T$-provable equivalence class $[\theta(x, y)]$ of formulas (from $F$) that are $T$-provably functional from $\phi(x)$ to $\psi(y)$, where provability is taken with respect to $S$.
\begin{definition}
\thlabel{syntactic-categories}
Let $\kappa$ be a cardinal or $\infty$. Following the recipe above, we define the following \emph{syntactic categories}.
\begin{itemize}
\item For a $\kappa$-geometric theory $T$, we let $\SynG_\kappa(T)$ be the syntactic category of $\kappa$-geometric formulas with respect to the $\kappa$-geometric deduction-system.
\item For a $\kappa$-sub-first-order theory $T$, we let $\SynSFO_\kappa(T)$ be the syntactic category of $\kappa$-sub-first-order formulas in with respect to the $\kappa$-sub-first-order deduction-system.
\item For a $\kappa$-first-order theory $T$, we let $\SynFO_\kappa(T)$ be the syntactic category of $\kappa$-first-order formulas in with respect to the $\kappa$-first-order deduction-system.
\item For a $\kappa$-first-order theory $T$, we let $\SynB_\kappa(T)$ be the syntactic category of $\kappa$-first-order formulas in with respect to the $\kappa$-classical deduction-system.
\end{itemize}
\end{definition}
\begin{proposition}
\thlabel{types-of-syntactic-categories}
Let $\kappa$ be a cardinal.
\begin{enumerate}[label=(\roman*)]
\item For any $\kappa$-geometric theory $T$ the category $\SynG_\kappa(T)$ is $\kappa$-geometric.
\item For any $\kappa$-sub-first-order theory $T$ the category $\SynSFO_\kappa(T)$ is $\kappa$-sub-Heyting.
\item For any $\kappa$-first-order theory $T$ the category $\SynFO_\kappa(T)$ is $\kappa$-Heyting.
\item For any $\kappa$-first-order theory $T$ the category $\SynB_\kappa(T)$ is $\kappa$-Boolean.
\end{enumerate}
\end{proposition}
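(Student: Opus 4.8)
The plan is to verify each structural requirement directly at the level of formulas, using the standard dictionary between logical connectives and categorical operations. The first step is to describe the subobject posets: in each of these syntactic categories a subobject of an object $[\phi(x)]$ is represented by a formula $\psi(x)$ from the relevant fragment with $\psi(x) \vdash_x \phi(x)$ provable in the relevant deduction-system, two such formulas representing the same subobject exactly when they are provably equivalent, and the subobject order being provable entailment. Under this identification the connectives act pointwise on $\Sub([\phi(x)])$: finitary meets are given by $\wedge$, joins of size $< \kappa$ by $\bigvee$, top and bottom by $\top$ and $\bot$, and, where the fragment allows it, Heyting implication by $\to$ and meets of size $< \kappa$ by $\bigwedge$. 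The remaining work is to exhibit the ambient categorical structure and check that these operations have the required stability and adjunction properties.

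For the finite-limit structure one takes the terminal object to be $[\top]$ in the empty context, the product $[\phi(x)] \times [\psi(y)]$ in disjoint contexts to be $[\phi(x) \wedge \psi(y)]$, and the equalizer of two parallel arrows to be the subobject cut out by the formula asserting that the two functional formulas take the same value. Writing an arrow $f \colon [\phi(x)] \to [\psi(y)]$ as a provably functional formula $\theta(x,y)$, pullback along $f$ is computed by substitution followed by existential quantification, $f^*[\chi(y)] = [\exists y\,(\theta(x,y) \wedge \chi(y))]$, and its left adjoint, the image, is $\exists_f[\alpha(x)] = [\exists x\,(\theta(x,y) \wedge \alpha(x))]$; the regular epimorphisms are the arrows whose image is the whole codomain. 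With these formulas in hand it is routine to identify coequalisers of kernel pairs and to check that regular epimorphisms are stable under pullback, the latter using the Frobenius axiom from \thref{deduction-systems}. This establishes that each syntactic category is regular.

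The $\kappa$-geometric structure of all four categories (part (i), and the geometric core of (ii)--(iv)) follows once the joins of size $< \kappa$ are shown to be stable under pullback: since $f^*(\bigvee_i \chi_i) = [\exists y\,(\theta \wedge \bigvee_i \chi_i)]$, this reduces to distributing $\wedge$ over $\bigvee$, which is the infinitary distributive axiom, together with commuting $\exists$ past $\bigvee$. For (ii) I would additionally check that $\to$ computes the Heyting implication in each subobject poset and that $f^*$ is a Heyting homomorphism; the latter is where the rules for implication (e) are genuinely used, and it is the one point not covered verbatim by the standard treatment. For (iii) the meets of size $< \kappa$ are checked to be stable under pullback just as the joins were, and the right adjoint to $f^*$ is exhibited as $\forall_f[\alpha(x)] = [\psi(y) \wedge \forall x\,(\theta(x,y) \to \alpha(x))]$, built from universal quantification and implication via the rules (g). For (iv), since every instance of excluded middle $\top \vdash_x \psi(x) \vee \neg\psi(x)$ is provable in the classical deduction-system, each subobject $[\psi(x)] \leq [\phi(x)]$ has complement $[\phi(x) \wedge \neg\psi(x)]$, so the subobject posets are Boolean and $\SynB_\kappa(T)$ is $\kappa$-Boolean.

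The main obstacle is not any single hard step but the uniform bookkeeping: one must confirm that every operation is well-defined on provable-equivalence classes, and that the stability-under-pullback and adjunction identities hold modulo provable equivalence rather than on the nose. Concretely, the crux is that each categorical adjunction, namely $\exists_f \dashv f^* \dashv \forall_f$ and the Heyting adjunction defining $\to$, corresponds exactly to an admissible rule of the relevant deduction-system, so the verification amounts to matching each universal property against the corresponding rule from \thref{deduction-systems}. The genuinely new case is the $\kappa$-sub-Heyting claim (ii): I expect the real care to go into confirming that adjoining only the implication rules (e) to the $\kappa$-geometric deduction-system already makes $\to$ a Heyting implication preserved by every $f^*$, without invoking the universal-quantifier rules that the full first-order argument relies on.
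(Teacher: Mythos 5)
Your outline is the standard verification that the paper itself does not spell out: its proof of \thref{types-of-syntactic-categories} is a citation (to \cite[Lemma D1.4.10]{johnstone_sketches_2002_2} and the author's thesis), and what those references do is exactly the formula-level bookkeeping you describe. On the connective/adjunction side your proposal is correct, including the one genuinely non-standard point you single out: the sub-Heyting case (ii) does go through with only the geometric rules plus the implication rules (e), since both the Heyting adjunction in $\Sub([\phi(x)])$ and the preservation of $\to$ by $f^*$ (the latter via functionality of $\theta$, Frobenius and the equality rules) never require universal quantification. One small precision issue: the operations are not quite ``pointwise'' --- inside $\Sub([\phi(x)])$ the implication of $[\psi_1]$ and $[\psi_2]$ is $[\phi \wedge (\psi_1 \to \psi_2)]$, and negation is $[\phi \wedge \neg\psi]$, since the unguarded formulas need not entail $\phi$; your formulas for $\forall_f$ and for complements do include this guard, so this is only a slip in the opening paragraph.

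There is, however, one genuine omission, and it is precisely the point the paper's proof flags: well-poweredness. In this paper a $\kappa$-geometric category is \emph{by definition} (\thref{geometric-heyting-boolean-category}) a \emph{well-powered} regular category with the stated join structure, and your proposal never checks this. This matters because it is the only place where the hypothesis ``$\kappa$ is a cardinal'' (as opposed to $\kappa = \infty$) is used --- your argument as written would equally ``prove'' the statement for $\kappa = \infty$, which is false: by \thref{proper-class-of-intuitionistic-formulas} (cf.\ \thref{classifying-toposes-counterexamples}) the subobject lattice of the terminal object of $\SynSFO_\infty(T)$ can be a proper class for $T$ the empty theory on two propositional variables. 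The fix is short and sits inside your own first paragraph: since subobjects of $[\phi(x)]$ are provable-equivalence classes of formulas of the fragment in the context $x$, and for a cardinal $\kappa$ and a set-sized signature there is only a set of $\kappa$-formulas in any fixed context, each subobject poset is a set. Adding that sentence (and noting where the cardinal hypothesis enters) closes the gap.
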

\begin{proof}
Straightforward, see for example \cite[Lemma D1.4.10]{johnstone_sketches_2002_2}. For a detailed approach that is more closely matching the terminology here, see \cite[Chapters 5 and 7]{kamsma_classifying_2018}. For well-poweredness it is important that we assumed $\kappa$ to be a cardinal, and not $\infty$.
\end{proof}
Let $\kappa$ be a cardinal. For $T$ and $\C$ a theory and its corresponding syntactic category as in \thref{syntactic-categories} there is a canonical structure $U^\kappa_T$ in $\C$ by interpreting a sort $X$ as $[\top(x)]$, where $x$ is a variable that sort. Function symbols $f: X \to Y$ and relation symbols $R(x)$ are then naturally interpreted as $[f(x) = y]$ and $[R(x)]$ respectively. For any formula $\phi(x)$ in the appropriate logic we then have
\[
\{ x : \phi(x) \}^{U^\kappa_T} = [\phi(x)].
\]
In particular, $U^\kappa_T$ is a model of $T$.
\begin{definition}
\thlabel{universal-syntactic-model}
We call the model $U^\kappa_T$ the \emph{universal syntactic model} of $T$.
\end{definition}
\begin{proposition}
\thlabel{sequents-in-universal-syntactic-model}
Let $\kappa$ be a cardinal and let $T$ be a theory in one of the four logics considered in \thref{syntactic-categories}. Then a sequent $\sigma$ is valid in $U^\kappa_T$, as a model in the appropriate syntactic category, if and only if $\sigma$ can be deduced from $T$ in the appropriate deduction-system.
\end{proposition}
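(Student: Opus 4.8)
The plan is to reduce the statement to the standard description of the subobject order in the syntactic category. Write the sequent as $\sigma = (\phi(x) \vdash_x \psi(x))$, with $\phi(x)$ and $\psi(x)$ formulas of the relevant logic in a common context $x$ of sort $X$. By \thref{sequents-and-theories}, $\sigma$ is valid in $U^\kappa_T$ exactly when $\{x : \phi(x)\}^{U^\kappa_T} \leq \{x : \psi(x)\}^{U^\kappa_T}$ holds in $\Sub(X^{U^\kappa_T})$. The defining property of the universal syntactic model gives $\{x : \phi(x)\}^{U^\kappa_T} = [\phi(x)]$ and $X^{U^\kappa_T} = [\top(x)]$, so this condition becomes $[\phi(x)] \leq [\psi(x)]$ in $\Sub([\top(x)])$.

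It therefore suffices to show that $[\phi(x)] \leq [\psi(x)]$ in $\Sub([\top(x)])$ if and only if $\phi(x) \vdash_x \psi(x)$ is deducible from $T$ in the appropriate deduction-system. First I would recall the explicit form of the subobjects of $[\top(x)]$ from the construction in \thref{syntactic-categories}: the canonical subobject attached to a formula $\phi(x)$ is represented by the monomorphism $[\phi(x)] \hookrightarrow [\top(x)]$ coming from the provably functional formula $[\phi(x) \wedge (x = x')]$, and for two such canonical subobjects one has $[\phi(x)] \leq [\psi(x)]$ precisely when $\phi(x) \vdash_x \psi(x)$ is $T$-provable. This is part of the standard syntactic-category machinery used in \thref{types-of-syntactic-categories}, so rather than reprove it I would simply cite the references there (\cite[Lemma D1.4.10]{johnstone_sketches_2002_2}, \cite[Chapters 5 and 7]{kamsma_classifying_2018}), noting that the description is uniform over all four logics since objects, arrows and hence the subobject order are defined in the same way relative to the corresponding deduction-system. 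Chaining the two equivalences proves the proposition.

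The only point requiring care --- and the nearest thing to an obstacle --- is that this biconditional secretly packages two statements of different character. The implication ``provable $\Rightarrow$ valid'' is nothing but soundness (\thref{soundness}) applied to $U^\kappa_T$, which is a model of $T$ in a category of the right type by \thref{types-of-syntactic-categories}. The reverse implication ``valid $\Rightarrow$ provable'' is the substantive content: it relies on the faithfulness of the subobject order, i.e.\ that provably inequivalent formulas give genuinely distinct subobjects and that the order does not identify more than $T$-provable entailment does. I would make sure the cited description of $\Sub([\top(x)])$ records exactly this ``only if'' direction, since it is what prevents the syntactic category from collapsing and is precisely what makes $U^\kappa_T$ a complete model for $T$.
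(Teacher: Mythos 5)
Your proof is correct and is essentially the paper's own argument: the paper's proof is literally just ``By construction,'' and your write-up faithfully unpacks what that means, namely that validity of $\phi(x) \vdash_x \psi(x)$ in $U^\kappa_T$ reduces to $[\phi(x)] \leq [\psi(x)]$ in $\Sub([\top(x)])$, which by the standard description of subobjects in syntactic categories (the references cited at \thref{types-of-syntactic-categories}) coincides with $T$-provable entailment in the relevant deduction-system.
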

\begin{proof}
By construction.
\end{proof}
\begin{proposition}
\thlabel{functors-as-models}
Let $\kappa$ be a cardinal.
\begin{enumerate}[label=(\roman*)]
\item For any $\kappa$-geometric theory $T$ and $\kappa$-geometric category $\C$ there is an equivalence $\Geom_\kappa(\SynG_\kappa(T), \C) \simeq \TMod(\C)$.
\item For any $\kappa$-sub-first-order theory $T$ and $\kappa$-sub-Heyting category $\C$ there is an equivalence $\SubHeyt_\kappa(\SynSFO_\kappa(T), \C) \simeq \TMod(\C)_{\kappa \to}$.
\item For any $\kappa$-first-order theory $T$ and $\kappa$-Heyting category $\C$ there is an equivalence $\Heyt_\kappa(\SynFO_\kappa(T), \C) \simeq \TMod(\C)_\kappa$.
\item For any $\kappa$-first-order theory $T$ and $\kappa$-Boolean category $\C$ there is an equivalence $\Heyt_\kappa(\SynB_\kappa(T), \C) \simeq \TMod(\C)_\kappa$.
\end{enumerate}
Each of these equivalences is given by sending a functor $F$ to the model $F(U^\kappa_T)$, and a natural transformation $\eta: F \to G$ is sent to the homomorphism $F(U^\kappa_T) \to G(U^\kappa_T)$ whose component at sort $X$ is $\eta_{X^{U^\kappa_T}}: X^{F(U^\kappa_T)} \to X^{G(U^\kappa_T)}$.
\end{proposition}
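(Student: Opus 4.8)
The plan is to show that the assignment $\Phi \colon F \mapsto F(U^\kappa_T)$ described in the statement is an equivalence, by checking it is well defined and then fully faithful and essentially surjective. I would treat the four cases in parallel, writing $\Syn$ for whichever of $\SynG_\kappa(T)$, $\SynSFO_\kappa(T)$, $\SynFO_\kappa(T)$, $\SynB_\kappa(T)$ is relevant, writing $\F$ for the corresponding category of structure-preserving functors into $\C$, and setting $U := U^\kappa_T$, indicating only where the cases genuinely diverge. That $\Phi$ lands in the correct category of models is immediate from \thref{functor-preserves-models}. For the action on morphisms, recall from \thref{functor-preserves-formulas} together with the defining property of the universal syntactic model that $F([\phi(x)]) = \{x : \phi(x)\}^{F(U)}$ for every object $[\phi(x)]$ of $\Syn$. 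Hence, given a natural transformation $\eta \colon F \to G$, writing $h_X := \eta_{X^U} \colon X^{F(U)} \to X^{G(U)}$ for its component at the sort object $[\top(x)]$, the naturality square at the inclusion $[\phi(x)] \hookrightarrow [\top(x)]$ forces $h_X \circ F(\mathrm{incl})$ to factor through $G([\phi(x)]) \hookrightarrow X^{G(U)}$, which is exactly the inequality $\{x:\phi\}^{F(U)} \le h_X^\ast \{x:\phi\}^{G(U)}$. So the components at the sorts assemble into a morphism of structures of the required kind (a homomorphism, a $\kappa$-sub-elementary, or a $\kappa$-elementary morphism, according to the fragment in which the objects of $\Syn$ live).

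For essential surjectivity I would construct, for each model $M$, a functor $F_M \colon \Syn \to \C$ sending $[\phi(x)]$ to a representing object of the subobject $\{x:\phi(x)\}^M$, and sending an arrow $[\theta(x,y)]$ to the morphism whose graph is $\{(x,y):\theta(x,y)\}^M$. Well-definedness is where $M \models T$ and soundness (\thref{soundness}) enter: $T$-provably equivalent formulas receive equal interpretations, and the provable functionality axioms for $\theta$ translate, in the regular category $\C$, into the statement that $\{(x,y):\theta\}^M$ is a total, single-valued relation and hence the graph of a unique arrow $\{x:\phi\}^M \to \{y:\psi\}^M$. Checking that $F_M$ preserves the relevant categorical structure --- finite limits, regular epimorphisms, $<\kappa$ joins, and in the (sub-)Heyting cases implication, $<\kappa$ meets and $\forall_{(-)}$ --- amounts to matching each categorical operation in $\Syn$ to the logical connective that defines it and invoking that $M$ interprets that connective correctly; this is the bulk of the work but is entirely parallel to \thref{types-of-syntactic-categories}. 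Since $F_M([\top(x)]) = X^M$ and $F_M$ reproduces the interpretations of the relation and function symbols, one obtains $F_M(U) \cong M$, so $M$ lies in the essential image.

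Full faithfulness is then a formal consequence of the fact that every object $[\phi(x)]$ of $\Syn$ is a subobject of a sort $[\top(x)]$ and that the functors preserve this inclusion: naturality forces any transformation $F \to G$ to be determined by its components at the sorts, giving faithfulness, while conversely a family $h = (h_X)_X$ underlying a morphism of structures extends to a natural transformation exactly when each $h_X$ restricts along the relevant monomorphisms, i.e.\ exactly when $\{x:\phi\}^{F(U)} \le h_X^\ast \{x:\phi\}^{G(U)}$ for every object $[\phi(x)]$. The real content is an \emph{alignment}: the fragment of logic used to form the objects of $\Syn$ is precisely the fragment preserved by the chosen morphisms of structures, which is why case (i) uses ordinary homomorphisms, case (ii) $\kappa$-sub-elementary morphisms, and cases (iii) and (iv) $\kappa$-elementary morphisms.

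I expect the genuinely delicate points, as opposed to the routine structure-checking, to be exactly these morphism alignments and the Boolean case (iv). For (iv) the objects of $\SynB_\kappa(T)$ are formed using the classical deduction-system, so for $F_M$ to be well defined one needs $M$ to validate classically $T$-provable sequents; this is guaranteed because $\C$ is $\kappa$-Boolean and the classical deduction-system is sound for such categories (\thref{soundness}). Finally, one should note that although the objects of $\SynB_\kappa(T)$ carry a $\kappa$-Boolean structure, we only ask the functors to be $\kappa$-Heyting; by \thref{kappa-geometric-from-boolean-is-kappa-heyting} any $\kappa$-geometric functor out of the $\kappa$-Boolean category $\SynB_\kappa(T)$ is automatically $\kappa$-Heyting, so there is no discrepancy between the functors appearing in (iv) and what the construction of $F_M$ produces.
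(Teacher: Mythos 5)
Your proposal is correct and follows essentially the same route as the paper's proof: the equivalence $F \mapsto F(U^\kappa_T)$, the inverse construction $M \mapsto F_M$ with $F_M([\phi(x)]) = \{x : \phi(x)\}^M$ and arrows given by graphs, and the identification of natural transformations with homomorphisms resp.\ $\kappa$-(sub-)elementary morphisms via the inequality $\{x:\phi\}^{F(U^\kappa_T)} \le h_X^*(\{x:\phi\}^{G(U^\kappa_T)})$. Your additional remarks on case (iv) (soundness of the classical system for $\kappa$-Boolean categories, and \thref{kappa-geometric-from-boolean-is-kappa-heyting} reconciling Heyting functors with the Boolean domain) are sound elaborations of points the paper leaves implicit.
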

\begin{proof}
Clearly $F(U^\kappa_T)$ is a model of $T$ again. Conversely, for a model $M$ of $T$ in $\C$ we can define a functor $F$ from the relevant syntactic category into $\C$ by setting $F([\phi(x)]) = \{x : \phi(x)\}^M$ and for an arrow $[\theta(x, y)]$ in the syntactic category we let $F([\theta(x, y)])$ be the arrow in $\C$ corresponding to the graph $\{x,y : \theta(x,y)\}^M$. It remains to check that natural transformations of functors on the left-hand side correspond to homomorphisms or $\kappa$-(sub-)elementary morphisms. Indeed, a natural transformation from $F$ to $G$ corresponds to commuting diagrams like below (where $\phi(x)$ ranges over formulas in the relevant logic):
\[
\begin{tikzcd}
\{x : \phi(x)\}^{F(U^\kappa_T)} \arrow[rr] \arrow[d, hook] &  & \{x : \phi(x)\}^{G(U^\kappa_T)} \arrow[d, hook] \\
X^{F(U^\kappa_T)} \arrow[rr, "h_X"']                       &  & X^{G(U^\kappa_T)}                              
\end{tikzcd}
\]
Which in turn corresponds to the condition $\{x : \phi(x)\}^{F(U^\kappa_T)} \leq h_X^*(\{x : \phi(x)\}^{G(U^\kappa_T)})$.
\end{proof}
\begin{corollary}
\thlabel{geometric-functors-as-boolean-models}
The equivalence in \thref{functors-as-models}(iv) can be simplified as follows. For any $\kappa$-first-order theory $T$ and $\kappa$-Boolean category $\C$ there is an equivalence $\Geom_\kappa(\SynB_\kappa(T), \C) \simeq \TMod(\C)_\kappa$.
\end{corollary}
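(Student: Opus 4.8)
The plan is to recognise that the claimed equivalence is essentially immediate from \thref{functors-as-models}(iv), once we observe that the two functor categories $\Geom_\kappa(\SynB_\kappa(T), \C)$ and $\Heyt_\kappa(\SynB_\kappa(T), \C)$ actually coincide. All the work lies in justifying this coincidence, which rests squarely on \thref{kappa-geometric-from-boolean-is-kappa-heyting}.

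First I would note that $\SynB_\kappa(T)$ is $\kappa$-Boolean by \thref{types-of-syntactic-categories}(iv), and that the target $\C$ is $\kappa$-Boolean by hypothesis, hence in particular $\kappa$-Heyting. Then \thref{kappa-geometric-from-boolean-is-kappa-heyting} applies with domain $\SynB_\kappa(T)$ and codomain $\C$: every $\kappa$-geometric functor $\SynB_\kappa(T) \to \C$ is automatically $\kappa$-Heyting. The reverse inclusion holds by definition, since every $\kappa$-Heyting functor is in particular $\kappa$-geometric. Thus the two categories have exactly the same objects.

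Next I would observe that both $\Geom_\kappa(\SynB_\kappa(T), \C)$ and $\Heyt_\kappa(\SynB_\kappa(T), \C)$ take natural transformations as their morphisms, so once the objects agree the hom-sets agree as well, with no further condition to verify. Hence the two categories are literally equal, and composing the identity with the equivalence of \thref{functors-as-models}(iv) yields the desired equivalence $\Geom_\kappa(\SynB_\kappa(T), \C) \simeq \TMod(\C)_\kappa$.

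There is no substantial obstacle here; the only point requiring care is checking that the hypotheses of \thref{kappa-geometric-from-boolean-is-kappa-heyting} are met, namely that the domain $\SynB_\kappa(T)$ is genuinely $\kappa$-Boolean (which relies on $\kappa$ being a cardinal rather than $\infty$, as flagged in the proof of \thref{types-of-syntactic-categories}) and that the codomain is at least $\kappa$-Heyting.
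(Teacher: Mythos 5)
Your proposal is correct and is essentially identical to the paper's own proof: both reduce the claim to the equality $\Geom_\kappa(\SynB_\kappa(T), \C) = \Heyt_\kappa(\SynB_\kappa(T), \C)$, obtained by applying \thref{kappa-geometric-from-boolean-is-kappa-heyting} with the $\kappa$-Boolean domain $\SynB_\kappa(T)$ and then invoking \thref{functors-as-models}(iv). Your additional remarks (the definitional reverse inclusion, the agreement of morphisms as natural transformations, and the need for $\kappa$ to be a genuine cardinal) merely spell out details the paper leaves implicit.
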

\begin{proof}
This follows from \thref{kappa-geometric-from-boolean-is-kappa-heyting} because $\SynB_\kappa(T)$ is $\kappa$-Boolean, so we have $\Geom_\kappa(\SynB_\kappa(T), \C) = \Heyt_\kappa(\SynB_\kappa(T), \C)$.
\end{proof}
\begin{corollary}
\thlabel{boolean-classifying-topos-iff-homomorphisms}
A Boolean topos $\F$ is the Boolean classifying topos of an infinitary first-order theory $T$ if for all Boolean toposes $\E$ there is an equivalence
\[
\Topos(\E, \F) \simeq \TMod(\E),
\]
where this equivalence is given by a generic model in the same way as \thref{classifying-toposes-def}.
\end{corollary}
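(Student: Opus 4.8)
The plan is to compare the hypothesis of the corollary with the definition of the Boolean classifying topos in \thref{classifying-toposes-def}. The two differ in exactly one respect: the definition demands an equivalence $\Topos(\E, \SetB[T]) \simeq \TMod(\E)_\infty$ onto the category of models with $\infty$-elementary morphisms, whereas the hypothesis supplies an equivalence $\Topos(\E, \F) \simeq \TMod(\E)$ onto the category of models with mere homomorphisms. Both are to be the generic-model functor $\Phi$ sending $f \mapsto f^*(G_T)$ and $\eta \mapsto (\eta_{X^{G_T}})_X$, and $\TMod(\E)_\infty$ is the identity-on-objects subcategory of $\TMod(\E)$ cut out by the $\infty$-elementary morphisms. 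So for each Boolean $\E$ it suffices to show that $\Phi$ corestricts to an equivalence $\Psi \colon \Topos(\E, \F) \to \TMod(\E)_\infty$.

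First I would show that $\Phi$ does land in $\TMod(\E)_\infty$, i.e.\ that every induced homomorphism $h = \Phi(\eta) \colon f^*(G_T) \to g^*(G_T)$ is in fact $\infty$-elementary. This is where Booleanness of $\F$ does the real work: by \thref{boolean-iff-every-map-into-is-open} every geometric morphism into $\F$ is open, so by \thref{open-sub-open-geometric-morphism-is-heyting-sub-heyting} both $f^*$ and $g^*$ are $\infty$-Heyting, and hence preserve interpretations of all infinitary first-order formulas by \thref{functor-preserves-formulas}; thus $\{x : \phi(x)\}^{f^*(G_T)} = f^*(\{x : \phi(x)\}^{G_T})$ and likewise for $g$. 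Writing $m \colon A \hookrightarrow X^{G_T}$ for the inclusion with $A = \{x : \phi(x)\}^{G_T}$, naturality of $\eta \colon f^* \Rightarrow g^*$ at $m$ gives $h_X \circ f^*(m) = g^*(m) \circ \eta_A$, where $h_X = \eta_{X^{G_T}}$. The right-hand side factors through $g^*(m)$, so $f^*(A) \leq h_X^*(g^*(A))$, which is precisely the $\infty$-elementarity inequality for $\phi$. As $\phi$ is arbitrary, $h$ is $\infty$-elementary and $\Psi$ is well defined. (This mirrors, for geometric morphisms, the morphism-by-morphism bookkeeping already carried out for functors in \thref{functors-as-models}.)

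It then remains to check that $\Psi$ is an equivalence, given that $\Phi = \iota \circ \Psi$ is one, where $\iota \colon \TMod(\E)_\infty \hookrightarrow \TMod(\E)$ is the faithful inclusion. Faithfulness of $\Psi$ is immediate from that of $\Phi$. Fullness follows from fullness of $\Phi$ and faithfulness of $\iota$: an $\infty$-elementary $g \colon \Psi(f_1) \to \Psi(f_2)$ equals $\iota(g) = \Phi(\eta) = \iota(\Psi(\eta))$ for some $\eta$, whence $\Psi(\eta) = g$. For essential surjectivity I would invoke the routine fact that isomorphisms in $\TMod(\E)$ are automatically $\infty$-elementary: an iso homomorphism has an inverse homomorphism, so the homomorphism inequality in both directions forces equality of interpretations on atomic formulas, and this propagates to all infinitary first-order formulas because pullback along an isomorphism is an isomorphism of the Heyting subobject posets commuting with quantification by Beck--Chevalley. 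Given this, any model $M$ is isomorphic in $\TMod(\E)$ to some $f^*(G_T)$ by essential surjectivity of $\Phi$, and that isomorphism is then already an isomorphism in $\TMod(\E)_\infty$, so $\Psi$ is essentially surjective. Hence $\Psi$ is an equivalence given by the generic model, and $\F$ satisfies the definition of $\SetB[T]$.

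The step I expect to be the main obstacle is the second one: verifying that the induced homomorphisms are $\infty$-elementary. It is the only place the hypothesis that $\F$ is Boolean is used, and it is used essentially, since Booleanness is exactly what upgrades the inverse-image functors to $\infty$-Heyting functors preserving full first-order logic; without it the same naturality square would witness only preservation of geometric formulas. The isomorphism lemma used in essential surjectivity is standard but should not be skipped, as it is precisely what reconciles the coarser isomorphisms of $\TMod(\E)$ with the finer morphisms of $\TMod(\E)_\infty$.
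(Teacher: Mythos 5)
Your proposal is correct and takes essentially the same route as the paper: the key step in both is that Booleanness of $\F$ makes every geometric morphism into it open (\thref{boolean-iff-every-map-into-is-open}), so the inverse image parts are $\infty$-Heyting and the naturality argument from (the proof of) \thref{functors-as-models} upgrades the induced homomorphisms to $\infty$-elementary morphisms. The remaining bookkeeping you carry out (corestriction, fullness/faithfulness, and the standard fact that isomorphisms of models are $\infty$-elementary, needed for essential surjectivity) is compressed in the paper into the single assertion $\TMod(\E) = \TMod(\E)_\infty$, so your added care there elaborates rather than departs from its proof.
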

\begin{proof}
The difference with \thref{classifying-toposes-def} is that we have $\TMod(\E)$ on the right-hand side instead of $\TMod(\E)_\infty$. Any homomorphism $h: M \to N$ of models of $T$ in $\E$ corresponds to a natural transformation of (the inverse image parts of) geometric morphisms into $\F$. Since $\F$ is Boolean these geometric morphisms are open (\thref{boolean-iff-every-map-into-is-open}). In particular, their inverse image parts are $\infty$-Heyting functors. So by (the proof of, with $G_T$ in the role of $U^\kappa_T$) \thref{functors-as-models} we have that $h$ is an $\infty$-elementary morphism. We conclude that $\TMod(\E) = \TMod(\E)_\infty$, and we have matched \thref{classifying-toposes-def}.
\end{proof}

\section{Completeness theorems}
\label{sec:completeness-theorems}
\begin{definition}
\thlabel{kappa-covering-topology}
Let $\kappa$ be a cardinal and let $\C$ be a $\kappa$-geometric category. A \emph{$\kappa$-covering family} for an object $Y$ in $\C$ is a set $\{f_i: X_i \to Y\}_{i \in I}$ of arrows in $\C$ with $|I| < \kappa$ such that $\bigvee_{i \in I} \Im f_i = Y$. We define the \emph{$\kappa$-covering topology} $J_\kappa$ on $\C$ by saying that a sieve is covering if it contains a $\kappa$-covering family.
\end{definition}
\begin{fact}[{\cite[Example C2.1.12(d)]{johnstone_sketches_2002_2} or \cite[Lemma X.5.4]{maclane_sheaves_1994}}]
\thlabel{kappa-covering-subcanonical}
Let $\kappa$ be a cardinal and let $\C$ be a $\kappa$-geometric category. The $\kappa$-covering topology on $\C$ is a subcanonical Grothendieck topology on $\C$. That is, representable presheaves are sheaves.
\end{fact}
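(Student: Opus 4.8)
The plan is to prove two things: that $J_\kappa$ satisfies the axioms of a Grothendieck topology, and that it is subcanonical. For the latter, recall that a topology is subcanonical precisely when every covering sieve is effective-epimorphic, i.e.\ when for every covering sieve $S$ on $Y$ and every object $A$ each matching family $\{x_f : \dom f \to A\}_{f \in S}$ has a unique amalgamation $x: Y \to A$ with $x f = x_f$. Since every $J_\kappa$-covering sieve contains a $\kappa$-covering family, and any covering sieve larger than an effective-epimorphic one is again effective-epimorphic (by a routine pullback argument using uniqueness), it suffices to show that the sieve generated by a single $\kappa$-covering family $\{f_i: X_i \to Y\}_{i \in I}$ with $\bigvee_i \Im f_i = Y$ is effective-epimorphic.

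First I would verify the topology axioms. The maximal sieve is covered by the identity $1_Y$, which is a one-element $\kappa$-covering family. For stability, given $g: Z \to Y$ and a covering family $\{f_i\}$, the pullbacks $g^*(f_i): X_i \times_Y Z \to Z$ lie in $g^*(S)$ and satisfy $\bigvee_i \Im g^*(f_i) = g^*\bigl(\bigvee_i \Im f_i\bigr) = g^*(Y) = Z$ by stability of joins under pullback, so $g^*(S)$ is again covering. For transitivity, suppose $S$ is covering and $R$ is a sieve with $f^*(R)$ covering for every $f \in S$; applying this to the members $f_i$ of a $\kappa$-covering family in $S$, each $f_i^*(R)$ contains a $\kappa$-covering family $\{g_{ij}: Z_{ij} \to X_i\}_{j \in J_i}$, the composites $f_i g_{ij}$ all lie in $R$, and, using that $\exists_{f_i}$ preserves joins, $\bigvee_{i,j} \Im(f_i g_{ij}) = \bigvee_i \exists_{f_i}\bigl(\bigvee_j \Im g_{ij}\bigr) = \bigvee_i \exists_{f_i}(X_i) = \bigvee_i \Im f_i = Y$. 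This combined family has size $< \kappa$ provided $\kappa$ is regular, which is the one place regularity of $\kappa$ is needed (otherwise one passes to the topology generated by the $\kappa$-covering families).

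For subcanonicity, uniqueness of the amalgamation is easy: if $x, x': Y \to A$ agree after precomposition with every $f_i$, then each $f_i$ factors through the equalizer $E \hookrightarrow Y$ of $x$ and $x'$, whence $\Im f_i \le E$ for all $i$ and therefore $Y = \bigvee_i \Im f_i \le E$, forcing $x = x'$. For existence I would reduce the gluing to two cases via the (regular epi, mono)-factorization $f_i = m_i e_i$ with $e_i: X_i \twoheadrightarrow Y_i$ and $m_i: Y_i = \Im f_i \hookrightarrow Y$. Since $m_i$ is monic, the kernel pair of $e_i$ coincides with $X_i \times_Y X_i$, so compatibility of the matching family at the pair $(i,i)$ says exactly that $x_{f_i}$ coequalizes this kernel pair; as $e_i$ is the coequalizer of its kernel pair, $x_{f_i}$ descends uniquely to $\bar x_i: Y_i \to A$. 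Compatibility at pairs $(i,j)$, together with the fact that the induced map $X_i \times_Y X_j \to Y_i \times_Y Y_j$ is a (regular) epimorphism, shows that the $\bar x_i$ agree on the intersections $Y_i \wedge Y_j$. This leaves the essential case: gluing a family $\{\bar x_i: Y_i \to A\}$ agreeing on intersections, where $\{Y_i \hookrightarrow Y\}$ is a family of subobjects with $\bigvee_i Y_i = Y$, into a single map $\bar x: Y \to A$.

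The main obstacle is precisely this last step, that is, showing a jointly-covering family of subobjects is effective-epimorphic. The natural approach is to form, inside $Y \times A$, the join $\Gamma = \bigvee_i \Gamma_i$ of the graphs $\Gamma_i \le Y_i \times A \le Y \times A$ of the $\bar x_i$, and to argue that $\Gamma$ is the graph of the desired map $\bar x$. That the projection $\Gamma \to Y$ is a cover follows quickly, since $\exists$ preserves joins gives $\exists_{\pi_Y}(\Gamma) = \bigvee_i Y_i = Y$; the content lies in showing $\Gamma \to Y$ is monic, i.e.\ that $\Gamma$ is single-valued, which is where the stability (and effectiveness) of the $\kappa$-ary unions in a $\kappa$-geometric category is used in an essential way and where agreement on the $Y_i \wedge Y_j$ enters. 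Once $\bar x$ is obtained, setting $x = \bar x$ yields $x f_i = \bar x m_i e_i = \bar x_i e_i = x_{f_i}$, completing the amalgamation and hence the proof that representables are sheaves.
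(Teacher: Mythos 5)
The paper offers no proof of this statement at all: it is imported as a Fact, with the proof deferred to the cited references (\cite[Example C2.1.12(d)]{johnstone_sketches_2002_2}, \cite[Lemma X.5.4]{maclane_sheaves_1994}). So your proposal is by construction a different route: you reprove the fact from the axioms of a $\kappa$-geometric category, and your architecture is the standard textbook one. The skeleton is correct: reduce to effective-epimorphicity of the sieve generated by a single $\kappa$-covering family; get uniqueness of amalgamations from $\bigvee_i \Im f_i = Y$ via an equalizer; descend each $x_{f_i}$ along the regular-epi part $e_i$ of $f_i$ (using that regular epis are coequalizers of their kernel pairs and that $X_i \times_Y X_j \to Y_i \times_Y Y_j$ is a regular epi); then glue the $\bar{x}_i \colon Y_i \to A$ along the covering family of subobjects $Y_i$. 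The one step you leave unexecuted is exactly the one you flag as ``the content'', monicity of $\Gamma = \bigvee_i \Gamma_i \to Y$, and it does complete with precisely the tools you name: pulling back along the two projections $Y \times A \times A \to Y \times A$ and using stability of $<\kappa$-joins, one gets $\Gamma \times_Y \Gamma = \bigvee_{i,j} \Gamma_i \times_Y \Gamma_j$ inside $Y \times A \times A$; each $\Gamma_i \times_Y \Gamma_j \cong Y_i \wedge Y_j$ carries the two maps to $A$ given by $\bar{x}_i$ and $\bar{x}_j$, which agree by hypothesis, so each term (hence the join) factors through $Y \times \Delta_A$; thus the kernel pair of $\Gamma \to Y$ has equal projections and $\Gamma \to Y$ is monic. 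Together with $\exists_{\pi_Y}(\Gamma) = \bigvee_i Y_i = Y$ and the (regular epi, mono) factorisation, $\Gamma \to Y$ is an isomorphism, and the rest of your argument closes up. So I would count the proposal as correct, with that computation to be written out.

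Your remark about transitivity is a genuine subtlety, not pedantry, and it is one the paper's \thref{kappa-covering-topology} glosses over: for singular $\kappa$ the class of sieves containing a $\kappa$-covering family need not satisfy the transitivity axiom. A concrete failure: take $\C$ to be the powerset of a singular $\kappa$ (an $\infty$-geometric poset), let $S$ be the sieve on $\kappa$ generated by a cofinal chain $\{\kappa_\alpha\}_{\alpha < \mathrm{cf}(\kappa)}$ of initial segments, and let $R$ be the sieve of all subsets of cardinality at most $1$; then $R$ pulls back to a covering sieve on every member of $S$, but $R$ itself contains no family of size $<\kappa$ whose union is $\kappa$. So the Fact, as literally stated for all cardinals, needs either $\kappa$ regular (harmless for the paper, which is always free to enlarge $\kappa$) or the reading of $J_\kappa$ as the topology \emph{generated} by the $\kappa$-covering families, which is a coverage and hence has the same sheaves; your parenthetical fix is the right one, and subcanonicity is unaffected either way.
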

Given \thref{kappa-covering-subcanonical} we can, and will, treat the Yoneda embedding $y: \C \to \Set^{\C^\op}$ as having codomain $\Sh(\C, J_\kappa)$.
\begin{proposition}
\thlabel{flat-continuous-is-geometric}
Let $\kappa$ be a cardinal and let $\C$ be a $\kappa$-geometric category. Then for any topos $\E$ we have
\[
\FlatCon((\C, J_\kappa), \E) = \Geom_\kappa(\C, \E).
\]
\end{proposition}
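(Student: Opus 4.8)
The plan is to observe that both $\FlatCon((\C, J_\kappa), \E)$ and $\Geom_\kappa(\C, \E)$ are \emph{full} subcategories of the category of functors $\C \to \E$, with natural transformations as morphisms, so that the asserted equality reduces to an equality of object classes. Since $\C$ is $\kappa$-geometric it is finitely complete, and hence a functor $\C \to \E$ is flat precisely when it preserves finite limits; a $\kappa$-geometric functor preserves finite limits by definition (\thref{geometric-heyting-boolean-functor}). Thus it suffices to fix a finite-limit-preserving $F: \C \to \E$ and show that $F$ is $J_\kappa$-continuous if and only if $F$ preserves regular epimorphisms and joins of size $< \kappa$.

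I would first set up two dictionaries. Because the $\kappa$-covering families form a basis generating $J_\kappa$ (\thref{kappa-covering-topology}), it is standard that a flat functor is $J_\kappa$-continuous iff it sends every $\kappa$-covering family $\{f_i : X_i \to Y\}_{i \in I}$ to an epimorphic family in $\E$. Secondly, in the topos $\E$ a family $\{g_i : A_i \to B\}$ is epimorphic iff $\bigvee_i \Im g_i = B$, and moreover every epimorphism in a topos is regular. I would also record that $F$ preserves images: the image of $f \colon X \to Y$ is $\exists_f$ of the top subobject of $X$, and by \thref{geometric-heyting-boolean-functor-remarks}(i) a $\kappa$-geometric functor commutes with $\exists_{(-)}$, so $F(\Im f) = \Im F(f)$.

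With these in hand both inclusions are short. For $\Geom_\kappa(\C, \E) \subseteq \FlatCon((\C, J_\kappa), \E)$, take a $\kappa$-covering family, so $\bigvee_i \Im f_i = Y$; applying $F$ and using preservation of images and of $< \kappa$ joins gives $\bigvee_i \Im F(f_i) = F(\bigvee_i \Im f_i) = F(Y)$, so $\{F(f_i)\}$ is epimorphic and $F$ is $J_\kappa$-continuous. For the converse, suppose $F$ is flat and $J_\kappa$-continuous. A regular epi $e \colon X \to Y$ has $\Im e = Y$, so $\{e\}$ is a (singleton, hence $< \kappa$ since $\kappa$ is infinite) $\kappa$-covering family; continuity makes $F(e)$ an epimorphism, thus a regular epimorphism in $\E$. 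For joins, let $\{A_i\}_{i \in I}$ be subobjects of $Y$ with $|I| < \kappa$ and set $A = \bigvee_i A_i$; the inclusions $A_i \hookrightarrow A$ form a $\kappa$-covering family of $A$, so the monos $\{F(A_i) \hookrightarrow F(A)\}$ are jointly epimorphic, whence $\bigvee_i F(A_i) = F(A) = F(\bigvee_i A_i)$ as subobjects of $F(Y)$.

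The only genuinely delicate points are bookkeeping ones. The first is justifying that continuity need only be checked on the generating $\kappa$-covering families rather than on all covering sieves, which is where flatness of $F$ is used. The second, and the one I expect to require the most care, is exploiting that $\E$ is a \emph{topos} so that ``epimorphism'' can be upgraded to ``regular epimorphism''; this is exactly what yields preservation of regular epis (rather than merely of epis) in the converse direction. Everything else is a direct application of the image/join dictionary together with the fact that $F$ preserves monomorphisms.
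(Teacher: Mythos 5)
Your proposal is correct and takes essentially the same route as the paper: the paper's own proof is a two-sentence condensation of exactly this argument (flatness equals finite-limit preservation since $\C$ is finitely complete, and $J_\kappa$-continuity amounts to preservation of images and joins of size $< \kappa$). Your write-up simply makes explicit the bookkeeping the paper leaves implicit, namely the reduction of continuity to $\kappa$-covering families and the dictionary between preserving images and preserving regular epimorphisms (using that epimorphisms in a topos are regular).
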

\begin{proof}
Since $\C$ is finitely complete we have that a functor $F: \C \to \E$ is flat iff it preserves finite limits. Furthermore, $J_\kappa$-continuity is exactly saying that $F$ preserves images and joins of size $< \kappa$.
\end{proof}
\begin{proposition}
\thlabel{yoneda-preserves-implication}
Let $\kappa$ be a cardinal and let $\C$ be a $\kappa$-sub-Heyting category. Then the Yoneda embedding $y: \C \to \Sh(\C, J_\kappa)$ preserves Heyting implications.
\end{proposition}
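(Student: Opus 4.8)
The plan is to show that the Yoneda embedding $y \colon \C \to \Sh(\C, J_\kappa)$ commutes with Heyting implication on subobject posets. Since $y$ is already known to be $\kappa$-geometric (being flat and $J_\kappa$-continuous, hence preserving finite limits, images and joins of size $< \kappa$ by \thref{flat-continuous-is-geometric} and \thref{continuous-diaconescu}), the only thing left to verify is that for subobjects $A, B \leq X$ in $\C$ we have $y(A \to B) = y(A) \to y(B)$ in $\Sh(\C, J_\kappa)$, where the right-hand implication is computed in the topos. The first thing I would do is reduce to a \emph{defining property} of implication: in any Heyting algebra (in particular in $\Sub(yX)$ inside the topos), $y(A) \to y(B)$ is the largest subobject $C$ of $yX$ such that $C \wedge y(A) \leq y(B)$. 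So it suffices to check that $y(A \to B)$ has this universal property.

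One inequality is automatic from functoriality. Applying $y$ to the defining inequality $(A \to B) \wedge A \leq B$ in $\Sub(X)$ and using that $y$ preserves finite meets (being a finite-limit preserving functor), we get $y(A \to B) \wedge y(A) \leq y(B)$, which by the adjunction gives $y(A \to B) \leq y(A) \to y(B)$. The substantive direction is the reverse inequality $y(A) \to y(B) \leq y(A \to B)$, and this is where the argument really lives.

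For the hard direction I would use the description of subobjects of a representable in a sheaf topos: a subobject of $yX$ is a sieve on $X$ that is closed under the topology $J_\kappa$. So $y(A) \to y(B)$ corresponds to a $J_\kappa$-closed sieve $R$ on $X$, and to prove $R \subseteq y(A \to B)$ I must show that every arrow $g \colon Z \to X$ belonging to $R$ factors through the mono $A \to B \rightarrowtail X$. The key computation is to unwind what it means for $g$ to lie in the sieve $y(A) \to y(B)$: by the Heyting adjunction in the slice, together with the fact that $y$ preserves pullbacks and meets, $g \in y(A)\to y(B)$ amounts to $g^*(A) \leq g^*(B)$ in $\Sub(Z)$, i.e.\ $g^*(A) \wedge g^*(B) = g^*(A)$, hence $g^*(A) \leq g^*(B)$. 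But in the $\kappa$-sub-Heyting category $\C$ the pullback $g^*$ is a Heyting homomorphism, so $g^*(A \to B) = g^*(A) \to g^*(B) = \top$ (the top element, since $g^*(A) \leq g^*(B)$). This says exactly that $g$ factors through $A \to B$, i.e.\ $g \in y(A \to B)$, as required.

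The main obstacle I anticipate is the bookkeeping in identifying the sieve-theoretic membership condition with the pullback inequality $g^*(A) \leq g^*(B)$ — in particular, making sure the implication $y(A) \to y(B)$ computed in the topos really does restrict, along each test arrow $g$, to the fibrewise implication, and that this is where the hypothesis that $\C$ is $\kappa$-sub-Heyting (so that $g^*$ preserves implication) is used. The reduction to test arrows $g$ and the passage between ``$g$ factors through $A \to B$'' and ``$g^*(A \to B) = \top$'' is the crux; once the Heyting-homomorphism property of $g^*$ is invoked, the rest is formal.
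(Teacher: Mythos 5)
Your proof is correct, but it takes a genuinely different route from the paper's. The paper verifies the Heyting adjunction directly and universally: for an \emph{arbitrary} subobject $S \leq y(X)$ it shows $S \leq y(A \to B)$ iff $S \wedge y(A) \leq y(B)$, by writing the closed sieve $S$ as a join of representable subobjects $y(C)$, transferring each condition $y(C) \leq y(A \to B)$ to $C \leq A \to B$ via full faithfulness of $y$, applying the adjunction inside $\Sub_\C(X)$, and reassembling using distributivity of $\wedge$ over $\bigvee$ in the topos. Notably, that argument only uses that the subobject posets of $\C$ are Heyting algebras; it never invokes the condition that pullbacks $g^*$ in $\C$ are Heyting homomorphisms. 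Your proof, by contrast, splits into two inequalities and, for the hard one, works arrow-wise in the closed sieve $R = y(A) \to y(B)$: membership $g \in R$ is unwound to $g^*(A) \leq g^*(B)$, and then the sub-Heyting hypothesis on $\C$ gives $g^*(A \to B) = g^*(A) \to g^*(B) = \top$, i.e.\ $g$ factors through $A \to B$. So your argument leans on exactly the hypothesis the paper's proof avoids, and in exchange it needs the (standard, but worth stating) fact that implication of subobjects in a topos is stable under pullback: the clean justification of your membership computation is that $g \in R$ iff $y(g)^*(R)$ is the top subobject of $y(Z)$, and $y(g)^*\bigl(y(A) \to y(B)\bigr) = y(g^*A) \to y(g^*B)$ since $y$ preserves pullbacks and the topos is $\infty$-sub-Heyting, after which order-reflection of $y$ finishes the step. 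Both proofs are valid; the paper's is more economical in hypotheses (it would work for a $\kappa$-geometric category whose subobject posets happen to be Heyting algebras), while yours makes transparent where pullback-stability of implication, in $\C$ and in the topos, enters the picture.
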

\begin{proof}
Let $A, B \leq X$ be subobjects in $\C$. We need to show that for any $S \leq y(X)$ we have that $S \leq y(A \to B)$ iff $S \wedge y(A) \leq y(B)$. Note that subobjects of $y(X)$ in $\Sh(\C, J_\kappa)$ correspond to $J_\kappa$-closed sieves on $X$. For any arrow $f: C \to X$ in $S$ we have that the monomorphism $\Im(f) \to X$ is in $S$ as well, so $S = \bigvee \{ y(C) : y(C) \leq S \}$. So we indeed have
\begin{align*}
S &\leq y(A \to B) & &\Longleftrightarrow \\
y(C) &\leq y(A \to B) &\text{ for all } y(C) \leq S \quad &\Longleftrightarrow \\
C &\leq A \to B &\text{ for all } y(C) \leq S \quad &\Longleftrightarrow \\
C \wedge A &\leq B &\text{ for all } y(C) \leq S \quad &\Longleftrightarrow \\
y(C \wedge A) = y(C) \wedge y(A) &\leq y(B) &\text{ for all } y(C) \leq S \quad &\Longleftrightarrow \\
S \wedge y(A) &\leq y(B), &&
\end{align*}
where the final line follows because $S \wedge y(A) = \bigvee \{ y(C) \wedge y(A) : y(C) \leq S \}$.
\end{proof}
\begin{corollary}
\thlabel{yoneda-geometric-sub-heyting}
Let $\kappa$ be a cardinal and let $\C$ be a $\kappa$-geometric (resp.\ $\kappa$-(sub-)Heyting) category. Then $y: \C \to \Sh(\C, J_\kappa)$ is $\kappa$-geometric (resp.\ $\kappa$-(sub-)Heyting).
\end{corollary}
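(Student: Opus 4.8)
The plan is to treat the three cases cumulatively: first establish the $\kappa$-geometric case, then add the extra structure needed for the $\kappa$-sub-Heyting and $\kappa$-Heyting cases in turn, since by \thref{geometric-heyting-boolean-functor} each kind of functor is the previous kind plus one further preservation property.

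For the $\kappa$-geometric case I would show that $y$ lies in $\FlatCon((\C, J_\kappa), \Sh(\C, J_\kappa))$ and then invoke \thref{flat-continuous-is-geometric} to conclude it is $\kappa$-geometric. Membership in $\FlatCon$ follows slickly from \thref{continuous-diaconescu}: since $J_\kappa$ is subcanonical (\thref{kappa-covering-subcanonical}), the identity geometric morphism on $\Sh(\C, J_\kappa)$ corresponds to $\operatorname{id}^* y = y$, so $y$ is flat and $J_\kappa$-continuous. Alternatively one checks this directly: the composite of $y$ with the limit-reflecting inclusion $\Sh(\C, J_\kappa) \hookrightarrow \Set^{\C^\op}$ is the ordinary Yoneda embedding, which preserves all limits that exist in $\C$, so $y$ itself preserves all small limits existing in $\C$ (in particular finite ones, giving flatness); and $y$ is $J_\kappa$-continuous by the definition of $J_\kappa$. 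Either way \thref{flat-continuous-is-geometric} yields that $y$ is $\kappa$-geometric.

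For the $\kappa$-sub-Heyting case there is nothing more to do: a $\kappa$-sub-Heyting functor is a $\kappa$-geometric functor that also preserves Heyting implication, the first part being the case just proved and the second being exactly \thref{yoneda-preserves-implication}.

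For the $\kappa$-Heyting case I must further show that $y$ preserves meets of size $< \kappa$ and commutes with $\forall_{(-)}$. Meets are immediate from the observation above that $y$ preserves all small limits existing in $\C$, since a meet $\bigwedge_{i \in I} A_i$ of subobjects of $X$ with $|I| < \kappa$ is the limit of the diagram $\{A_i \hookrightarrow X\}$. Commutation with $\forall_{(-)}$ is the main obstacle, as it is not covered by any earlier result; I would prove it by mirroring the argument of \thref{yoneda-preserves-implication}. Fix $f: X \to Y$ and $A \leq X$. One inequality is formal: since $y$ preserves pullbacks and order, $y(f)^*(y(\forall_f A)) = y(f^*(\forall_f A)) \leq y(A)$, so $y(\forall_f A) \leq \forall_{y(f)}(y(A))$ by adjunction. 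For the reverse, write $S := \forall_{y(f)}(y(A))$ as the join $\bigvee\{ y(C) : C \in \Sub_\C(Y),\ y(C) \leq S \}$, using as in \thref{yoneda-preserves-implication} that every subobject of $y(Y)$ is a join of representables. For each such $C$, the relation $y(C) \leq S$ gives $y(f^*(C)) = y(f)^*(y(C)) \leq y(A)$ by the defining adjunction of $\forall_{y(f)}$; since $y$ is full and faithful it reflects the subobject order, so $f^*(C) \leq A$, whence $C \leq \forall_f A$ and $y(C) \leq y(\forall_f A)$. Taking the join over all such $C$ gives $S \leq y(\forall_f A)$, completing the equality and hence the corollary.
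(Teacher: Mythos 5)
Your treatment of the $\kappa$-geometric and $\kappa$-sub-Heyting cases coincides exactly with the paper's proof: the identity geometric morphism on $\Sh(\C, J_\kappa)$ corresponds under \thref{continuous-diaconescu} to $y$, so $y$ is flat and $J_\kappa$-continuous, hence $\kappa$-geometric by \thref{flat-continuous-is-geometric}, and preservation of implication is precisely \thref{yoneda-preserves-implication}. Where you genuinely diverge is the $\kappa$-Heyting case: the paper disposes of it by citing \cite[Corollary 3.2]{butz_classifying_1998}, whereas you prove it from scratch, and your argument is correct. Meets of size $<\kappa$ of subobjects are wide pullbacks in any regular category (the greatest lower bound in $\Sub(X)$ is the wide pullback, using image factorisation), these are limits existing in $\C$, and $y$ preserves all limits existing in $\C$ because its composite with the limit-preserving and limit-reflecting inclusion $\Sh(\C, J_\kappa) \hookrightarrow \Set^{\C^\op}$ is the ordinary Yoneda embedding. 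Your $\forall_f$ computation correctly mirrors the proof of \thref{yoneda-preserves-implication}: one direction via pullback preservation and the counit of $f^* \dashv \forall_f$, the other via writing $\forall_{y(f)}(y(A))$ as a join of representable subobjects, transporting each through the two adjunctions, and using that the fully faithful $y$ reflects the subobject order. What your route buys is self-containedness — the Heyting case becomes a corollary of the same technique as the sub-Heyting case rather than an external citation; what the paper's route buys is brevity. Two points to make explicit if you write this up: the identification of the subobject meet with the wide pullback does use regularity (images), and the fact that every subobject of $y(Y)$ is a join of the representable subobjects below it is extracted from the \emph{proof} of \thref{yoneda-preserves-implication}, not its statement, so it deserves to be factored out as a standalone lemma.
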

\begin{proof}
By the continuous version of Diaconescu's theorem \thref{continuous-diaconescu} we have that $y$, which corresponds to the identity on $\Sh(\C, J_\kappa)$, is flat and $J_\kappa$-continuous, and hence it is $\kappa$-geometric by \thref{flat-continuous-is-geometric}. The claim about being $\kappa$-sub-Heyting then follows from \thref{yoneda-preserves-implication}, where the one about being $\kappa$-Heyting is precisely \cite[Corollary 3.2]{butz_classifying_1998}.
\end{proof}
\begin{corollary}
\thlabel{validity-in-yoneda-universal-model}
Let $\kappa$ be a cardinal and let $T$ be a $\kappa$-geometric (resp.\ $\kappa$-(sub-)first-order) theory. Write $\C$ for the appropriate syntactic category of $T$. Then $y(U^\kappa_T)$ in $\Sh(\C, J_\kappa)$ is a model of $T$. Furthermore, the $\kappa$-geometric (resp.\ $\kappa$-(sub-)first-order) sequents that are valid in $y(U^\kappa_T)$ are precisely those that can be deduced from $T$ in the $\kappa$-geometric (resp.\ $\kappa$-(sub-)first-order) deduction-system.

Similarly, the $\kappa$-first-order sequents valid in $y(U^\kappa_T)$ in $\Sh(\SynB_\kappa(T), J_\kappa)$ are precisely those that can be deduced from the $\kappa$-classical deduction-system.
\end{corollary}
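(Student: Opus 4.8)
The plan is to exploit that the Yoneda embedding $y$ is simultaneously a functor of the appropriate logical type---so that it preserves formula interpretations---and fully faithful---so that it reflects the subobject ordering. Combined with \thref{sequents-in-universal-syntactic-model}, which identifies validity in $U^\kappa_T$ with deducibility, this gives all cases at once.

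First I would record that $y(U^\kappa_T)$ is a model of $T$. In the geometric, sub-first-order, and first-order cases the functor $y$ is respectively $\kappa$-geometric, $\kappa$-sub-Heyting, and $\kappa$-Heyting by \thref{yoneda-geometric-sub-heyting}; in the Boolean case $\SynB_\kappa(T)$ is $\kappa$-Boolean by \thref{types-of-syntactic-categories}(iv) and $\Sh(\SynB_\kappa(T), J_\kappa)$ is a topos, hence $\kappa$-Heyting, so $y$ is $\kappa$-Heyting by \thref{kappa-geometric-from-boolean-is-kappa-heyting}. In each case $y$ preserves the relevant formula interpretations by \thref{functor-preserves-formulas}, so \thref{functor-preserves-models} applied to $F = y$ delivers that $y(U^\kappa_T)$ is a model of $T$.

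The heart of the argument is the equivalence of validity in $y(U^\kappa_T)$ with validity in $U^\kappa_T$. Fix a sequent $\phi(x) \vdash_x \psi(x)$ in the relevant logic. Using that $y$ preserves interpretations (\thref{functor-preserves-formulas}), validity in $y(U^\kappa_T)$ amounts to
\[
y(\{x : \phi(x)\}^{U^\kappa_T}) \leq y(\{x : \psi(x)\}^{U^\kappa_T})
\]
as subobjects of $y(X^{U^\kappa_T})$. Here I invoke the categorical fact that a fully faithful finite-limit-preserving functor reflects the subobject order: since $y$ preserves monomorphisms and is fully faithful, $y(A) \leq y(B)$ holds if and only if $A \leq B$ for subobjects $A, B$ of a common object. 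Thus the displayed inequality holds exactly when $\{x:\phi(x)\}^{U^\kappa_T} \leq \{x:\psi(x)\}^{U^\kappa_T}$ in $\C$, i.e.\ exactly when the sequent is valid in $U^\kappa_T$. Applying \thref{sequents-in-universal-syntactic-model} then translates this into deducibility in the appropriate deduction-system---the $\kappa$-geometric, $\kappa$-(sub-)first-order, or (for the second part) $\kappa$-classical system, according to which syntactic category we started from.

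There is no serious obstacle beyond assembling the already-established pieces; the one point requiring care is the reflection of subobject inclusions by $y$. This is where full faithfulness of Yoneda is essential: a priori $y$ only preserves the order, and it is the backward direction (if $y(A) \leq y(B)$ then $A \leq B$) that needs fullness, obtained by lifting the comparison morphism $y(A) \to y(B)$ through $y$ and using faithfulness to see that the lifted morphism respects the inclusions into $X$. In the Boolean case one should also confirm that no deductive strength beyond what $\SynB_\kappa(T)$ encodes creeps in, but this is immediate since that syntactic category is built with respect to the $\kappa$-classical deduction-system.
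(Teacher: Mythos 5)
Your proposal is correct and follows essentially the same route as the paper's proof: soundness (via $y$ being a logical functor of the appropriate type, \thref{yoneda-geometric-sub-heyting}) gives one direction, and full faithfulness of the Yoneda embedding reflecting the subobject order, combined with \thref{sequents-in-universal-syntactic-model}, gives the converse. Your slightly more explicit treatment of the Boolean case via \thref{kappa-geometric-from-boolean-is-kappa-heyting} is a cosmetic variation, not a different argument.
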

\begin{proof}
The fact that $y(U^\kappa_T)$ is a model of $T$, and hence at least satisfies the sequents deducible from $T$, follows from $y$ being a $\kappa$-geometric (resp.\ $\kappa$-(sub-)Heyting) functor (\thref{yoneda-geometric-sub-heyting}). For the other direction, if a sequent $\phi(x) \vdash_x \psi(x)$ is valid in $y(U^\kappa_T)$ then that means that $y([\phi(x)]) = \{x : \phi(x)\}^{y(U^\kappa_T)} \leq \{x : \psi(x)\}^{y(U^\kappa_T)} = y([\psi(x)])$, and so $[\phi(x)] \leq [\psi(x)]$ because $y$ is full and faithful.
\end{proof}
\begin{theorem}[Intuitionistic completeness theorems]
\thlabel{intuitionistic-completeness-theorems}
Let $T$ be a geometric (resp.\ infinitary (sub-)first-order) theory. If a geometric (resp.\ infinitary (sub-)first-order) sequent $\sigma$ is valid in every model of $T$ in every topos then $\sigma$ deducible from $T$ in the geometric (resp.\ infinitary (sub-)first-order) deduction-system.
\end{theorem}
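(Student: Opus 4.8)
The plan is to prove completeness by exhibiting, for a suitable cardinal $\kappa$, a single model of $T$ living in a topos in which validity of a sequent coincides \emph{exactly} with its deducibility; this is precisely what \thref{validity-in-yoneda-universal-model} provides. Since the hypothesis asserts validity of $\sigma$ in every model of $T$ in every topos, it applies in particular to this distinguished model, and reading off deducibility from validity there finishes the argument. I would treat the geometric, sub-first-order and first-order cases uniformly, choosing in each case the appropriate syntactic category and deduction-system.

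First I would reduce to a bounded cardinal. The theory $T$ is a set of sequents and $\sigma$ is a single sequent, so the formulas occurring in $T$ together with those in $\sigma$ form a set; as each such formula involves only infinitary disjunctions and conjunctions of set-many sizes, their sizes are bounded and there is a cardinal $\kappa$ for which every formula in $T$ and in $\sigma$ is a $\kappa$-formula. Thus $T$ becomes a $\kappa$-geometric (resp.\ $\kappa$-(sub-)first-order) theory and $\sigma$ a $\kappa$-geometric (resp.\ $\kappa$-(sub-)first-order) sequent. I would then write $\C$ for the corresponding syntactic category $\SynG_\kappa(T)$ (resp.\ $\SynSFO_\kappa(T)$ or $\SynFO_\kappa(T)$), which is $\kappa$-geometric (resp.\ $\kappa$-(sub-)Heyting) by \thref{types-of-syntactic-categories}, and form the topos $\Sh(\C, J_\kappa)$.

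Next I would invoke \thref{validity-in-yoneda-universal-model}: the object $y(U^\kappa_T)$ is a model of $T$ in the topos $\Sh(\C, J_\kappa)$, and a $\kappa$-sequent is valid in $y(U^\kappa_T)$ if and only if it is deducible from $T$ in the $\kappa$-geometric (resp.\ $\kappa$-(sub-)first-order) deduction-system. By the hypothesis $\sigma$ is valid in this particular model, so $\sigma$ is deducible from $T$ in the $\kappa$-deduction-system. It remains to promote this to deducibility in the full ($\infty$-)deduction-system named in the statement.

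Finally, the promotion step is immediate: the $\kappa$-deduction-system is literally a restriction of the $\infty$-deduction-system, every rule of the former (in particular the disjunction and conjunction rules, restricted to size $< \kappa$) being an instance of the corresponding rule of the latter. Hence any $\kappa$-deduction of $\sigma$ from $T$ is already a deduction in the geometric (resp.\ infinitary (sub-)first-order) deduction-system, as required. The substantive content is all bundled into \thref{validity-in-yoneda-universal-model}, so the only point requiring genuine care is the cardinal bookkeeping in the reduction step; I would emphasise that this direction does \emph{not} rely on the deeper equivalence between the $\kappa$- and $\infty$-deduction-systems (\thref{kappa-deduction-is-infinitary}), since only the trivial inclusion of $\kappa$-proofs into $\infty$-proofs is needed here.
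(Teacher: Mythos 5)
Your proposal is correct and follows essentially the same route as the paper's own proof: pick $\kappa$ so that $T$ and $\sigma$ are $\kappa$-formulas, apply the hypothesis to the model $y(U^\kappa_T)$ in $\Sh(\C, J_\kappa)$, and conclude via \thref{validity-in-yoneda-universal-model}. Your explicit final step (that a $\kappa$-deduction is literally an $\infty$-deduction, with no appeal to \thref{kappa-deduction-is-infinitary}) is left implicit in the paper but is exactly the intended justification.
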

\begin{proof}
The case for geometric logic is well known and the infinitary first-order case is \cite[Corollary 3.4]{butz_classifying_1998}. With our current setup, the proof for the latter works for all three cases, so we give it for the sub-first-order case.

Let $\sigma$ be a infinitary sub-first-sequent and let $\kappa$ be such that $T$ and $\kappa$ are $\kappa$-sub-first-order. By assumption $\sigma$ is valid in $y(U^\kappa_T)$ in the topos $\Sh(\SynSFO_\kappa(T), J_\kappa)$. We then use \thref{validity-in-yoneda-universal-model} to conclude that $\sigma$ can be deduced from $T$ using the $\kappa$-sub-first-order deduction-system, as required.
\end{proof}
\begin{corollary}
\thlabel{first-order-conservative-over-sub-first-order}
Let $T$ be an infinitary sub-first-order theory. Then any infinitary sub-first-order sequent that is derivable from $T$ in the infinitary first-order deduction system, is already derivable in the infinitary sub-first-order deduction-system.
\end{corollary}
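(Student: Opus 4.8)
The plan is to combine the soundness of the infinitary first-order deduction-system with the completeness theorem for the sub-first-order deduction-system that we have just established in \thref{intuitionistic-completeness-theorems}. The underlying idea is a standard semantic sandwich: derivability in the stronger (first-order) system forces validity in all toposes, and validity in all toposes then feeds directly into the completeness result to yield derivability in the weaker (sub-first-order) system. So no new combinatorics on proof trees is needed; the work has already been done in the completeness theorem.

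Concretely, I would start with an infinitary sub-first-order sequent $\sigma$ that is derivable from $T$ in the infinitary first-order deduction-system, and let $M$ be an arbitrary model of $T$ in an arbitrary topos $\E$. Since every topos is an $\infty$-Heyting category by \thref{topos-infty-heyting}, soundness of the infinitary first-order deduction-system (\thref{soundness}) applies and shows that $\sigma$ is valid in $M$. As $M$ and $\E$ were arbitrary, this establishes that $\sigma$ is valid in every model of $T$ in every topos. I would then invoke the sub-first-order case of \thref{intuitionistic-completeness-theorems} to conclude that $\sigma$ is deducible from $T$ in the infinitary sub-first-order deduction-system, which is exactly the claim.

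The only point requiring a word of care is that the notion of ``model of $T$'' does not depend on which deduction-system one has in mind: validity of a sequent in a structure is the purely semantic condition $\{x:\phi(x)\}^M \leq \{x:\psi(x)\}^M$, so the class of structures over which soundness and completeness quantify is literally the same. Because $T$ is a sub-first-order theory, all of its sequents are interpretable in any $\infty$-sub-Heyting category, and in particular in any topos, so there is no mismatch between the two halves of the argument. I do not expect any genuine obstacle here; the entire content of the corollary is packaged inside \thref{intuitionistic-completeness-theorems}, and this proof simply records that the first-order deduction-system cannot prove any \emph{new} sub-first-order sequents beyond those already available intuitionistically in the sub-first-order system, precisely because both are sound for the same semantics against which the sub-first-order system is complete.
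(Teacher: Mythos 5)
Your proposal is correct and is essentially the paper's own proof: soundness of the infinitary first-order deduction-system in every topos (which is $\infty$-Heyting) gives validity in all models of $T$ in all toposes, and then the sub-first-order case of \thref{intuitionistic-completeness-theorems} yields derivability in the sub-first-order system. The extra remark that validity of sequents is a purely semantic notion independent of the deduction-system is a reasonable clarification but adds nothing beyond what the paper's two-line argument already uses.
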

\begin{proof}
Since every topos is sound for the infinitary first-order deduction-system, sequents that can be derived from $T$ in this deduction-system are valid in every model of $T$ in every topos. We can thus apply \thref{intuitionistic-completeness-theorems}.
\end{proof}
\begin{lemma}
\thlabel{sub-first-order-modulo-classical}
Let $\kappa$ be a cardinal. For any $\kappa$-first-order formula $\phi(x)$ there is a $\kappa$-sub-first-order formula $\phi'(x)$ that is equivalent to $\phi(x)$ modulo the $\kappa$-classical deduction-system.
\end{lemma}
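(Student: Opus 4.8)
The plan is to argue by induction on the construction of $\phi(x)$ as a $\kappa$-first-order formula, using the seven clauses of \thref{first-order-formulas}. At each stage I would produce a $\kappa$-sub-first-order formula $\phi'(x)$, with the same free variables as $\phi$, that is provably equivalent to $\phi(x)$ in the $\kappa$-classical deduction-system. The base case (1) and the clauses (2), (3), (4) and (6) are immediate: each of these connectives already lies in the $\kappa$-sub-first-order fragment, so I would simply apply it to the formulas handed up by the induction hypothesis and invoke the fact that provable equivalence is a congruence for each connective (this follows from the structural and connective rules of \thref{deduction-systems}). The only clauses doing real work are (5), infinitary conjunction, and (7), universal quantification---precisely the two formation rules by which $\kappa$-first-order logic exceeds $\kappa$-sub-first-order logic.

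For those two clauses I would use the classical de Morgan dualities. Writing $\neg\psi$ as shorthand for $\psi \to \bot$, which is $\kappa$-sub-first-order whenever $\psi$ is (both $\to$ and $\bot$ are available), I would set, for a universal quantifier $\forall y\,\psi$ with induction hypothesis $\psi'$,
\[
(\forall y\,\psi)' \;:=\; \bigl(\exists y\,(\psi' \to \bot)\bigr) \to \bot,
\]
and for an infinitary conjunction $\bigwedge_{i \in I}\psi_i$ with $|I| < \kappa$ and hypotheses $\psi_i'$,
\[
\Bigl(\bigwedge_{i \in I}\psi_i\Bigr)' \;:=\; \Bigl(\bigvee_{i \in I}(\psi_i' \to \bot)\Bigr) \to \bot.
\]
Both are $\kappa$-sub-first-order: $y$ is a finite string of variables, the disjunction has size $< \kappa$, and since passing from $\psi_i$ to $\psi_i'$ does not enlarge the free variables the disjunction still has only finitely many free variables, as required by clause (4).

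It then remains to verify that each replacement is provably equivalent to the original in the $\kappa$-classical deduction-system. The forward implications $\forall y\,\psi \vdash_x \neg\exists y\,\neg\psi$ and $\bigwedge_i \psi_i \vdash_x \neg\bigvee_i \neg\psi_i$ are already intuitionistically derivable, using existential (resp.\ disjunction) elimination together with the implication rules. The reverse implications are where the law of excluded middle enters: to derive $\neg\exists y\,\neg\psi \vdash_x \forall y\,\psi$ I would apply $\forall$-introduction and reduce to $\neg\exists y\,\neg\psi \vdash_{x,y} \psi$; from the hypothesis one gets $\neg\psi \vdash_{x,y} \bot$, i.e.\ $\neg\neg\psi$, and double negation elimination then yields $\psi$. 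The derivation of $\neg\bigvee_i\neg\psi_i \vdash_x \bigwedge_i \psi_i$ is the same, reducing via $\bigwedge$-introduction to each conjunct. Finally the induction hypothesis is folded in by congruence, replacing $\psi$ by $\psi'$ inside the $\neg$, $\exists$ and $\bigvee$.

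I expect the main obstacle to be bookkeeping rather than anything conceptual: one must check at clauses (4)--(7) that the side conditions on the infinitary connectives (finitely many free variables in (4) and (5), finite strings of quantified variables in (6) and (7)) survive the rewrites, and one must be careful to produce actual derivations in the deduction-system of \thref{deduction-systems} so that the argument stays purely proof-theoretic and makes no appeal to a (classical) completeness theorem.
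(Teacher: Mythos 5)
Your proposal is correct and is essentially the paper's own proof: the paper likewise rewrites the two forbidden connectives via the classical de Morgan dualities, replacing $\forall y\,\psi$ by $\neg\exists y\,\neg\psi$ and $\bigwedge_{i\in I}\psi_i$ by $\neg\bigvee_{i\in I}\neg\psi_i$, with $\neg\psi$ abbreviating $\psi\to\bot$. The paper merely states the replacement without spelling out the induction or the derivations; your explicit treatment of the congruence steps, the side conditions on free variables, and the use of double negation elimination in the reverse implications fills in exactly the routine details the paper leaves implicit.
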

\begin{proof}
Note that negation is allowed in sub-first-order logic, as $\neg \psi$ is just $\psi \to \bot$. We can now simply replace the ``forbidden'' connectives in $\phi(x)$ as follows: we replace every occurrence of $\forall y \psi(x, y)$ by $\neg \exists \neg \psi(x, y)$, where $\psi(x, y)$ is a subformula of $\phi(x)$, and we replace infinitary $\bigwedge_{i \in I} \psi(x, y)$ by $\neg \bigvee_{i \in I} \neg \psi_i(x, y)$.
\end{proof}
\begin{lemma}
\thlabel{boolean-topos-with-syntactic-model}
Let $\kappa$ be a cardinal and let $T$ be a $\kappa$-first-order theory. Then there is a Boolean topos $\E$ with a model $M_T$ of $T$ such that the $\kappa$-first-order sequents that are valid in $T$ are precisely those that are $T$-provable in the $\kappa$-classical deduction-system.
\end{lemma}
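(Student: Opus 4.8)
The plan is to take $\mathcal{F} = \Sh(\SynB_\kappa(T), J_\kappa)$ as a first approximation and then force Booleanness by passing to its double-negation subtopos. Concretely, I would set $\E = \Sh_{\neg\neg}(\mathcal{F})$, the topos of sheaves for the double-negation topology on $\mathcal{F}$; this is again a Grothendieck topos and is Boolean, as is standard (see e.g.\ \cite[Sections V.3 and VI.1]{maclane_sheaves_1994}). Writing $a_{\neg\neg} \colon \mathcal{F} \to \E$ for the associated sheafification (i.e.\ the inverse image part of the geometric inclusion $\E \hookrightarrow \mathcal{F}$) and $y \colon \SynB_\kappa(T) \to \mathcal{F}$ for the Yoneda embedding, I would consider the composite $G = a_{\neg\neg} \circ y \colon \SynB_\kappa(T) \to \E$ and define $M_T = G(U^\kappa_T)$.

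First I would record that $G$ is $\kappa$-Heyting. Both $y$ (by \thref{yoneda-geometric-sub-heyting}) and $a_{\neg\neg}$ (being the inverse image of a geometric morphism, see \thref{topos-infty-heyting}) are $\kappa$-geometric, so $G$ is $\kappa$-geometric; since $\SynB_\kappa(T)$ is $\kappa$-Boolean (\thref{types-of-syntactic-categories}) and $\E$ is a Boolean topos, hence $\kappa$-Heyting, \thref{kappa-geometric-from-boolean-is-kappa-heyting} upgrades $G$ to a $\kappa$-Heyting functor. Consequently $M_T$ is a model of $T$ in the Boolean topos $\E$ by \thref{functor-preserves-models}, and the soundness direction is immediate: any sequent $T$-provable in the $\kappa$-classical deduction-system is valid in $M_T$ by \thref{soundness}. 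By \thref{functor-preserves-formulas} together with the identity $\{x : \phi(x)\}^{U^\kappa_T} = [\phi(x)]$, every $\kappa$-first-order $\phi(x)$ satisfies $\{x : \phi(x)\}^{M_T} = G([\phi(x)])$, so the remaining completeness direction reduces to showing that $G$ reflects the subobject order: if $G([\phi(x)]) \le G([\psi(x)])$ then $[\phi(x)] \le [\psi(x)]$, which by \thref{sequents-in-universal-syntactic-model} is exactly $\kappa$-classical provability of $\phi(x) \vdash_x \psi(x)$ from $T$.

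Establishing this conservativity is the crux, and the step I expect to be the main obstacle, since $\mathcal{F}$ itself is typically not Boolean — a closed sieve given by a large join of representables need not be complemented, as one already sees for localic $\mathcal{F}$ over a $\kappa$-complete but incomplete Boolean algebra — so the double-negation passage cannot be avoided, and one must check it does not collapse the order among the syntactic subobjects. Here Booleanness of $\SynB_\kappa(T)$ enters essentially: for $A \le X$ in $\SynB_\kappa(T)$ the subobject $y(A)$ has $y(\neg A)$ as a Boolean complement in $\Sub_{\mathcal{F}}(y(X))$, because $y$ preserves the finite join $A \vee \neg A = X$ and meet $A \wedge \neg A = 0$; hence $y(A)$ is complemented and therefore $\neg\neg$-closed in $\mathcal{F}$. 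I would then invoke the standard closed-subobject description of a subtopos, giving an order isomorphism between the $\neg\neg$-closed subobjects of $y(X)$ in $\mathcal{F}$ and all subobjects of $a_{\neg\neg}(y(X))$ in $\E$, realised by $a_{\neg\neg}$; care is needed to apply this to the possibly non-sheaf object $y(X)$, using that pulling $a_{\neg\neg}(y(A))$ back along the unit $y(X) \to a_{\neg\neg}(y(X))$ returns the $\neg\neg$-closure of $y(A)$, which is $y(A)$ itself. It follows that $a_{\neg\neg}(y(A)) \le a_{\neg\neg}(y(B))$ iff $y(A) \le y(B)$, which by fullness and faithfulness of $y$ holds iff $A \le B$, yielding the required conservativity and hence the lemma.
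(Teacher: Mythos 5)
Your proposal is correct, and it builds exactly the same objects as the paper ($\E = \Sh_{\neg\neg}(\Sh(\SynB_\kappa(T), J_\kappa))$ and $M_T = ay(U^\kappa_T)$), but the verification runs along a genuinely different route. The paper never considers the composite $a \circ y$ as a single functor with Boolean domain; instead, since $y$ and $a$ are individually only known to be $\kappa$-(sub-)Heyting, it first replaces $T$ by a classically equivalent $\kappa$-sub-first-order theory $T'$ via \thref{sub-first-order-modulo-classical}, checks that $M_T \models T'$ using sub-Heytingness of $y$ and $a$, and recovers $M_T \models T$ from Booleanness of $\E$; then, for the completeness direction, it reduces validity of a sequent $\psi \vdash_x \chi$ in $M_T$ to showing that the single closed subobject $\{x : \neg\neg\phi'(x)\}^{y(U^\kappa_T)}$ (with $\phi'$ the sub-first-order translation of $\psi \to \chi$) equals the top, invoking the isomorphism between $\neg\neg$-closed subobjects of $y(X)$ and subobjects of $ay(X)$. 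You bypass the translation machinery entirely by applying \thref{kappa-geometric-from-boolean-is-kappa-heyting} to $G = a_{\neg\neg} \circ y$, whose domain $\SynB_\kappa(T)$ is $\kappa$-Boolean by \thref{types-of-syntactic-categories}; this makes $G$ a $\kappa$-Heyting functor in one stroke, so \thref{functor-preserves-formulas} gives $\{x:\phi(x)\}^{M_T} = G([\phi(x)])$ for all $\kappa$-first-order $\phi$, and then you get conservativity by observing that the subobjects $y([\phi])$ are complemented (as $y$ preserves the finite meets and joins witnessing complementation in the Boolean syntactic category), hence $\neg\neg$-closed, so that the same closed-subobject isomorphism lets $a_{\neg\neg}$ reflect the order among them, and \thref{sequents-in-universal-syntactic-model} finishes. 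Both arguments ultimately rest on the same key fact (the paper's citation of Corollary V.3.8 of Mac Lane--Moerdijk, applied, as you correctly flag, to the non-sheaf object $y(X)$), but your version is more self-contained for this lemma and showcases \thref{kappa-geometric-from-boolean-is-kappa-heyting}, which the paper proves yet does not use here; the paper's detour through \thref{sub-first-order-modulo-classical} costs nothing globally, since that translation lemma is needed elsewhere in the paper anyway.
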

\begin{proof}
Let $\E = \Sh_{\neg \neg}(\Sh(\SynB_\kappa(T), J_\kappa))$ and write $i: \E \rightleftarrows \Sh(\SynB_\kappa(T), J_\kappa): a$ for the inclusion and sheafification functors. Set $M_T = ay(U^\kappa_T)$, we will verify that $M_T$ is as claimed.

By \thref{sub-first-order-modulo-classical} we have that every $\kappa$-first-order sequent $\sigma$ is equivalent to a $\kappa$-sub-first-order sequent $\sigma'$ modulo the $\kappa$-classical deduction-system. So we find $\kappa$-sub-first-order $T'$ that is equivalent to $T$ modulo the $\kappa$-classical deduction-system, by replacing every sequent $\sigma$ in $T$ by $\sigma'$. Then $U^\kappa_T$ is a model of $T'$ because $\SynB_\kappa(T)$ is $\kappa$-Boolean. Since $y$ and $a$ are both $\kappa$-sub-Heyting (by \thref{yoneda-geometric-sub-heyting} and \cite[Corollary 1.8]{johnstone_open_1980} respectively), we have that $M_T$ is a model of $T'$. Since $\E$ is Boolean we then conclude that $M_T$ is also a model of $T$.

By soundness we then have that the sequents valid in $M_T$ are at least those that are $T$-provable in the $\kappa$-classical deduction-system. Let now $\psi(x) \vdash_x \chi(x)$ be a $\kappa$-first-order sequent that is valid in $M_T$, by \thref{validity-in-yoneda-universal-model} it suffices to show that this sequent is valid in $y(U^\kappa_T)$. Letting $\phi(x)$ be the formula $\psi(x) \to \chi(x)$ and taking $\phi'(x)$ to be its classically equivalent $\kappa$-sub-first-order version from \thref{sub-first-order-modulo-classical}, this further reduces to showing that $\neg \neg \phi'(x)$ holds in $y(U^\kappa_T)$. We then have that
\[
a(\{x : \neg \neg \phi'(x)\}^{y(U^\kappa_T)}) =
\{x : \neg \neg \phi'(x)\}^{M_T} =
\{x : \phi(x)\}^{M_T} =
X^{M_T} =
a(X^{y(U^\kappa_T)}).
\]
Since $\{x : \neg \neg \phi'(x)\}^{y(U^\kappa_T)}$ is a closed subobject of $X^{y(U^\kappa_T)}$ for the $\neg \neg$-topology and the sheafification functor $a$ induces an isomorphism between closed subobjects of $X^{y(U^\kappa_T)}$ and subobjects of $a(X^{y(U^\kappa_T)})$ (see e.g.\ \cite[Corollary V.3.8]{maclane_sheaves_1994}), we conclude that $\{x : \neg \neg \phi'(x)\}^{y(U^\kappa_T)} = X^{y(U^\kappa_T)}$, as required.
\end{proof}
\begin{theorem}[Classical completeness theorem]
\thlabel{classical-completeness-theorem}
Let $T$ be an infinitary first-order theory. If a sequent $\sigma$ is valid in every model of $T$ in every Boolean topos then $\sigma$ deducible from $T$ in the infinitary classical deduction-system.
\end{theorem}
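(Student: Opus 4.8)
The plan is to mirror the proof of the intuitionistic completeness theorem \thref{intuitionistic-completeness-theorems}, with \thref{boolean-topos-with-syntactic-model} playing the role that \thref{validity-in-yoneda-universal-model} played there; that lemma already carries essentially all of the weight. Given the sequent $\sigma$, I would first fix a cardinal $\kappa$ large enough that both $T$ and $\sigma$ are $\kappa$-first-order, i.e.\ every formula occurring in $T$ or in $\sigma$ lies in $\L_{\kappa, \omega}$. This reduces the genuinely infinitary statement to one about the $\kappa$-classical deduction-system, which is the setting in which the concrete model supplied by \thref{boolean-topos-with-syntactic-model} lives.

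Next I would apply \thref{boolean-topos-with-syntactic-model} to this $\kappa$ to obtain a Boolean topos $\E$ together with a model $M_T$ of $T$ in $\E$ such that the $\kappa$-first-order sequents valid in $M_T$ are exactly those that are $T$-provable in the $\kappa$-classical deduction-system. Because $\E$ is a Boolean topos and $M_T$ is a model of $T$, the hypothesis of the theorem applies directly and tells us that $\sigma$ is valid in $M_T$. By the defining property of $M_T$ this immediately gives that $\sigma$ is deducible from $T$ in the $\kappa$-classical deduction-system. As in the intuitionistic case, one then passes from $\kappa$-classical to infinitary classical deducibility via \thref{kappa-deduction-is-infinitary}, the point being that restricting conjunctions and disjunctions to size $< \kappa$ does not change which sequents in $\L_{\kappa, \omega}$ are provable.

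I do not expect any genuine obstacle at this stage: the theorem is a short corollary once \thref{boolean-topos-with-syntactic-model} is in hand, and all the real difficulty has been absorbed into the proof of that lemma (building the Boolean topos $\Sh_{\neg\neg}(\Sh(\SynB_\kappa(T), J_\kappa))$ and checking, via \thref{sub-first-order-modulo-classical} and the behaviour of $\neg\neg$-sheafification, that its canonical model reflects exactly classical provability). The only place calling for the slightest care is the choice of $\kappa$ and the standard passage back to the unrestricted infinitary deduction-system.
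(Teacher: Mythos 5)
Your proposal is correct and is essentially the paper's own proof: fix $\kappa$ with $T$ and $\sigma$ both $\kappa$-first-order, apply \thref{boolean-topos-with-syntactic-model} to get the Boolean topos $\E$ with model $M_T$, note $\sigma$ is valid in $M_T$ by hypothesis, and conclude deducibility. The only cosmetic difference is your final appeal to \thref{kappa-deduction-is-infinitary}: the direction you need (from $\kappa$-classical to infinitary classical deducibility) is the trivial subsystem inclusion, so the paper simply elides this step rather than citing that theorem.
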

\begin{proof}
Let $\kappa$ be a cardinal such that $\sigma$ and $T$ are $\kappa$-first-order. Let $\E$ and $M_T$ be as in \thref{boolean-topos-with-syntactic-model}. By assumption $\sigma$ is valid in $M_T$ and so we conclude that $\sigma$ is deducible from $T$ in the infinitary classical deduction-system.
\end{proof}
\begin{corollary}
In the completeness results \thref{intuitionistic-completeness-theorems,classical-completeness-theorem} we may restrict to localic toposes (localic Boolean toposes in the latter).
\end{corollary}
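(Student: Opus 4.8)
The plan is to deduce the sharper statements from the completeness theorems already proved, by transporting models along a localic cover. Two ingredients are needed: a suitable localic cover of an arbitrary (resp.\ Boolean) topos, and the fact that the inverse image part of a \emph{surjection} reflects the order on subobjects. For the latter, suppose $p \colon \F \to \E$ is a surjection, so $p^*$ is faithful. I would first observe that $p^*$ reflects isomorphisms among monomorphisms: if $m$ is a mono with $p^*(m)$ an isomorphism, then applying the left-exact, colimit-preserving functor $p^*$ to the cokernel pair of $m$ shows that $p^*(m)$ being epi forces the two coprojections to agree after $p^*$, hence (by faithfulness) before $p^*$, so $m$ is epi and thus iso as toposes are balanced. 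Consequently, for $A, B \leq X$ with $p^*A \leq p^*B$ we get $p^*(A \wedge B) = p^*A$, so the inclusion $A \wedge B \hookrightarrow A$ is sent to an iso and therefore is one, i.e.\ $A \leq B$. Hence, if $p$ is moreover open so that $p^*$ is $\infty$-Heyting (\thref{open-sub-open-geometric-morphism-is-heyting-sub-heyting}) and preserves interpretations of the relevant formulas (\thref{functor-preserves-formulas}), then any sequent valid in $p^*M$ is already valid in $M$.

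For the cover, in the intuitionistic cases I would invoke the Joyal--Tierney theorem that every Grothendieck topos $\E$ admits an \emph{open} surjection $p \colon \Sh(L) \to \E$ from a localic topos. Given any model $M$ of $T$ in any topos $\E$, openness makes $p^*M$ a model of $T$ in the localic topos $\Sh(L)$ preserving the interpretations of (sub-)first-order formulas, while surjectivity lets us pull validity back by the reflection fact above. So if $\sigma$ is valid in every model of $T$ in every localic topos, it is valid in $p^*M$, hence in $M$; as $\E$ and $M$ were arbitrary, $\sigma$ is valid in every model of $T$ in every topos, and \thref{intuitionistic-completeness-theorems} gives deducibility. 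In the classical case I would instead use Barr's covering theorem to obtain a surjection $p \colon \Sh(B) \to \E$ from the topos of sheaves on a complete Boolean algebra $B$; this $\Sh(B)$ is localic and Boolean, and since the Boolean target $\E$ forces $p$ to be open (\thref{boolean-iff-every-map-into-is-open}), the identical argument together with \thref{classical-completeness-theorem} yields the result for localic Boolean toposes.

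The main obstacle is the classical case: one needs the localic cover to be simultaneously Boolean, which the generic Joyal--Tierney cover does not supply. The resolution is precisely Barr's theorem, whose cover is sheaves on a complete Boolean algebra and hence localic \emph{and} Boolean; moreover the openness of the cover, essential for transporting models and preserving formula interpretations, comes for free from the target being Boolean rather than having to be built in. A routine secondary point is to confirm that $\Sh(B)$ for a complete Boolean algebra $B$ is indeed a localic Boolean topos, which is standard.
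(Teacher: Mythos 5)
Your proposal is correct and follows essentially the same route as the paper: an open surjection from a localic topos (Joyal--Tierney) for the intuitionistic cases, and Barr's theorem together with the fact that a geometric morphism into a Boolean topos is automatically open (\thref{boolean-iff-every-map-into-is-open}) for the classical case. The only difference is that you spell out the argument that surjections reflect validity of sequents, which the paper leaves implicit by referring to the same idea in Butz and Johnstone; this detail is correct and makes the proof self-contained.
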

\begin{proof}
This improvement is already present for infinitary first-order logic in \cite[Corollary 3.3]{butz_classifying_1998}, the same idea applies to the other logics: it follows because every topos admits an open surjective geometric morphism from a localic topos (see e.g.\ \cite[Theorem IX.9.1]{maclane_sheaves_1994}). For the Boolean case we have by Barr's theorem that every topos admits a surjective geometric morphism from a topos of sheaves on a complete Boolean algebra (see e.g.\ \cite[Theorem IX.9.2]{maclane_sheaves_1994}), so in particular such a topos is Boolean and localic. Since the codomain of this geometric morphism is Boolean the geometric morphism is open by \thref{boolean-iff-every-map-into-is-open}.
\end{proof}
We had restricted our deduction-systems to work with disjunctions and conjunctions of size $< \kappa$ for technical reasons (see the discussion after \thref{soundness}). We can now prove that this is not a genuine restriction on the strength of the deduction-system.
\begin{theorem}
\thlabel{kappa-deduction-is-infinitary}
Let $\kappa$ be a cardinal. Then a $\kappa$-geometric (resp.\ $\kappa$-(sub-)first-order) sequent $\sigma$ is deducible from a $\kappa$-geometric (resp.\ $\kappa$-(sub-)first-order) theory $T$ in the $\kappa$-geometric (resp.\ $\kappa$-(sub-)first-order) deduction-system if and only if it is deducible from $T$ in the geometric (resp.\ infinitary (sub-)first-order) deduction-system.

Similarly, a $\kappa$-first-order sequent is deducible from $T$ in the $\kappa$-classical deduction system if and only if it is deducible from $T$ in the infinitary classical deduction system
\end{theorem}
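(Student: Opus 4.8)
The plan is to prove the two directions separately, one of which is essentially free. For the forward (easy) direction, observe that the $\kappa$-deduction-system is literally a sub-system of the corresponding infinitary ($\infty$-)deduction-system: every rule allowed for disjunctions (and conjunctions) of size $< \kappa$ is a special case of the unrestricted rule, and all the remaining rules coincide verbatim. Hence any deduction of $\sigma$ from $T$ in the $\kappa$-system is already a deduction in the infinitary system, giving the implication from $\kappa$-deducibility to infinitary deducibility for all four fragments simultaneously. All the content is in the converse.

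For the converse in the three intuitionistic fragments (geometric, sub-first-order, first-order) I would argue uniformly via soundness at $\kappa = \infty$. Suppose the $\kappa$-sequent $\sigma$ is deducible from the $\kappa$-theory $T$ in the infinitary deduction-system. By \thref{topos-infty-heyting} every topos is an $\infty$-Heyting (hence $\infty$-geometric and $\infty$-sub-Heyting) category, so \thref{soundness} applied with $\kappa = \infty$ tells us that $\sigma$ is valid in every model of $T$ in every topos. I would then specialise this to the single model $y(U^\kappa_T)$ in the syntactic sheaf topos $\Sh(\C, J_\kappa)$, where $\C$ is the appropriate syntactic category of $T$; this is a model of $T$ by \thref{validity-in-yoneda-universal-model}. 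Since $\sigma$ is a $\kappa$-sequent valid in $y(U^\kappa_T)$, the same \thref{validity-in-yoneda-universal-model} yields that $\sigma$ is deducible from $T$ in the $\kappa$-deduction-system, as required.

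The classical case requires one modification, and this is where the only genuine subtlety lies: the syntactic sheaf topos $\Sh(\SynB_\kappa(T), J_\kappa)$ is not Boolean in general, so I cannot feed the Yoneda model into soundness for the infinitary classical system (which is only sound for $\infty$-Boolean categories). Instead I would invoke \thref{boolean-topos-with-syntactic-model}, which supplies a genuinely Boolean topos $\E$ together with a model $M_T$ of $T$ whose valid $\kappa$-first-order sequents are exactly those $T$-provable in the $\kappa$-classical deduction-system. Now if $\sigma$ is deducible from $T$ in the infinitary classical deduction-system, then by \thref{topos-infty-heyting} (Boolean toposes are $\infty$-Boolean) and \thref{soundness}, $\sigma$ is valid in $M_T$; \thref{boolean-topos-with-syntactic-model} then hands back $\kappa$-classical deducibility. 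The main obstacle is thus not in this argument itself but in having the right Boolean model available, so that the effort is concentrated entirely in \thref{boolean-topos-with-syntactic-model} (which passes to the double-negation subtopos to force Booleanness). Beyond that, the only point to verify is that $\sigma$ and $T$ really are $\kappa$-sized, which holds by hypothesis and is exactly what makes the syntactic categories and the models $y(U^\kappa_T)$ and $M_T$ available.
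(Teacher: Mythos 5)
Your proposal is correct and follows essentially the same route as the paper: the trivial inclusion of deduction-systems for one direction, and for the converse, soundness of the infinitary system in the ($\infty$-Heyting) topos $\Sh(\C, J_\kappa)$ applied to the model $y(U^\kappa_T)$ together with \thref{validity-in-yoneda-universal-model}, replacing this in the classical case by the Boolean topos $\E$ and model $M_T$ of \thref{boolean-topos-with-syntactic-model}. Your explicit flagging of why the Boolean detour is necessary (the syntactic sheaf topos on $\SynB_\kappa(T)$ need not be Boolean) is exactly the point the paper handles the same way.
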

\begin{proof}
We prove the first-order version of the statement, it should be clear how to adjust the argument for the other two cases. The left to right direction is trivial, we prove the converse. Consider the model $y(U^\kappa_T)$ of $T$ in $\Sh(\SynFO_\kappa(T), J_\kappa)$. Since the topos $\Sh(\SynFO_\kappa(T), J_\kappa)$ is in particular $\infty$-Heyting we have that the infinitary first-order deduction-system is sound for it. Hence $\sigma$ must be valid in $y(U^\kappa_T)$ and so we conclude by \thref{validity-in-yoneda-universal-model} that $\sigma$ can be deduced from the $\kappa$-first-order deduction-system.

For the classical version we use the $\E$ and $M_T$ from \thref{boolean-topos-with-syntactic-model} instead of $\Sh(\SynFO_\kappa(T), J_\kappa)$ and $y(U^\kappa_T)$.
\end{proof}
It is well known that one can construct $\Set[T]$ as $\Sh(\SynG_\kappa(T), J_\kappa)$ for large enough $\kappa$. This is because every geometric formula is equivalent to a disjunction of regular formulas (i.e.\ formulas that only use conjunction and existential quantification). So up to provable equivalence there is only a set of them, and we just pick $\kappa$ bigger than the cardinality of this set (though we could even do with smaller $\kappa$). Classifying toposes for the other fragments of logic may not exist (see \thref{classifying-toposes-counterexamples}). As soon as a theory has a classifying topos for its appropriate logic, we get a strong version of completeness. In fact, it follows from our main theorems that this strong version is equivalent to having a classifying topos for the appropriate logic, because for such theories there will only be a set of formulas in the relevant logic, up to provable equivalence modulo the relevant theory.
\begin{theorem}[Strong completeness]
\thlabel{strong-completeness}
Let $T$ be any theory.
\begin{enumerate}[label=(\roman*)]
\item If $T$ is geometric then the geometric sequents that can be deduced from $T$ in the geometric deduction-system are precisely those that are valid in $G_T$ in the topos $\Set[T]$.
\item If $T$ is infinitary sub-first-order such that $\SetSFO[T]$ exists, then the infinitary sub-first-order sequents that can be deduced from $T$ in the infinitary sub-first-order deduction-system are precisely those that are valid in $G_T$ in the topos $\SetSFO[T]$.
\item If $T$ is infinitary first-order such that $\SetFO[T]$ exists, then the infinitary first-order sequents that can be deduced from $T$ in the infinitary first-order deduction-system are precisely those that are valid in $G_T$ in the topos $\SetFO[T]$.
\item If $T$ is infinitary first-order such that $\SetB[T]$ exists, then the infinitary first-order sequents that can be deduced from $T$ in the infinitary classical deduction-system are precisely those that are valid in $G_T$ in the topos $\SetB[T]$.
\end{enumerate}
\end{theorem}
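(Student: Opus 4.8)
The plan is to prove all four statements by the same two-step argument, splitting the claimed equality of sequent-sets into its two inclusions. In every case one inclusion is soundness and the other is a transfer of validity from the generic model $G_T$ to arbitrary models, followed by an appeal to the relevant completeness theorem. The only differences between the four parts are which class of geometric morphism (or topos) is involved, which completeness theorem we cite, and correspondingly which fragment of formulas the inverse image is guaranteed to preserve.

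First I would dispatch the direction that deducibility implies validity by soundness (\thref{soundness}). In each case $G_T$ is a model of $T$ in the appropriate classifying topos, and that topos is $\infty$-Heyting (being a topos) in cases (i)--(iii) and $\infty$-Boolean (being a Boolean topos) in case (iv). Hence any sequent deducible from $T$ in the relevant deduction-system---geometric, infinitary sub-first-order, infinitary first-order, or infinitary classical---is valid in $G_T$.

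For the converse, suppose $\sigma$ is valid in $G_T$. The key step is to show that $\sigma$ is then valid in \emph{every} model of $T$ in every topos (every Boolean topos in case (iv)), which is exactly the hypothesis of \thref{intuitionistic-completeness-theorems} (resp.\ \thref{classical-completeness-theorem}). So let $M$ be a model of $T$ in a topos $\E$. By the defining universal property of the classifying topos (\thref{classifying-toposes-def}), essential surjectivity of the equivalence supplies a geometric morphism $f$ from $\E$ into the classifying topos (sub-open in case (ii), open in case (iii), and automatically open in case (iv) by \thref{boolean-iff-every-map-into-is-open} since the codomain is Boolean) with $f^*(G_T) \cong M$. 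The inverse image $f^*$ is $\infty$-geometric in case (i) (\thref{topos-infty-heyting}) and $\infty$-sub-Heyting resp.\ $\infty$-Heyting in the remaining cases (\thref{open-sub-open-geometric-morphism-is-heyting-sub-heyting}), so by \thref{functor-preserves-formulas} it preserves interpretations of all formulas in the relevant fragment. Writing $\sigma$ as $\phi(x) \vdash_x \psi(x)$, validity in $G_T$ means $\{x : \phi(x)\}^{G_T} \leq \{x : \psi(x)\}^{G_T}$; applying the monotone map $f^*$ and using preservation of interpretations together with the isomorphism $f^*(G_T) \cong M$ yields $\{x : \phi(x)\}^{M} \leq \{x : \psi(x)\}^{M}$, i.e.\ $\sigma$ is valid in $M$. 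Since $M$ and $\E$ were arbitrary, the completeness theorem applies and $\sigma$ is deducible, closing the second inclusion.

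I expect the main difficulty to be bookkeeping rather than a genuine obstacle: one must match, in each of the four cases, the class of geometric morphism furnished by the universal property with the fragment of logic its inverse image preserves and with the deduction-system appearing in the corresponding completeness theorem. The one point requiring a little care is case (iv), where we range only over Boolean toposes: there the generic model lives in a Boolean topos so classical soundness applies, every geometric morphism into the classifying topos is automatically open, and the completeness input is \thref{classical-completeness-theorem} rather than \thref{intuitionistic-completeness-theorems}. It is also worth noting that validity of a sequent depends only on the underlying structure, so the distinction between $\TMod(\E)$, $\TMod(\E)_{\infty \to}$ and $\TMod(\E)_\infty$---which differ only in their morphisms---is irrelevant for the transfer of validity, and essential surjectivity of the equivalence is all that is used.
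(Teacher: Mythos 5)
Your proposal is correct and follows essentially the same argument as the paper: one direction is soundness, and the other transfers validity from $G_T$ to an arbitrary model via the inverse image of the geometric morphism furnished by the classifying-topos property, then invokes \thref{intuitionistic-completeness-theorems} (or \thref{classical-completeness-theorem} for part (iv)). Your version just makes explicit the bookkeeping (which fragment each inverse image preserves, via \thref{functor-preserves-formulas} and \thref{open-sub-open-geometric-morphism-is-heyting-sub-heyting}) that the paper's proof leaves implicit.
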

\begin{proof}
Statement (i) is well-known and (iii) is implicit in \cite{butz_classifying_1998}. We still mention them here because all statements admit the same proof (though overkill for (i)).

Clearly the model $G_T$ satisfies all sequents that are deducible from $T$. Conversely, if $G_T$ satisfies a sequent $\sigma$ then it must be satisfied by every model in every topos, because every such model is the inverse image of $G_T$ under an appropriate geometric morphism. So by completeness \thref{intuitionistic-completeness-theorems} we have that $\sigma$ must be deducible from $T$ in the appropriate deduction-system. For statement (iv) we restrict to Boolean toposes and apply \thref{classical-completeness-theorem} instead.
\end{proof}

\section{The sub-first-order classifying topos}
\label{sec:the-sub-first-order-classifying-topos}
We recall the main result from \cite{butz_classifying_1998} about the existence of $\SetFO[T]$. We also briefly recall its proof, as we base our proof strategy for the existence of $\SetSFO[T]$ on it.
\begin{fact}[{\cite[Theorem 5.3]{butz_classifying_1998}}]
Let $T$ be an infinitary first-order theory $T$. Then $\SetFO[T]$ exists if and only if $T$ is \emph{locally small}, i.e.\ there is only a set of infinitary first-order formulas, up to $T$-provable equivalence in the infinitary first-order deduction-system.
\end{fact}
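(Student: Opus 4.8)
The plan is to prove the biconditional by handling the two implications with genuinely different tools. The forward direction (existence of $\SetFO[T]$ implies local smallness) is soft and rests only on well-poweredness of toposes together with completeness, whereas the backward direction (local smallness implies existence) is the actual construction, built as a topos of sheaves on a truncated syntactic category. I would present the forward direction first, since it explains \emph{why} the cardinality bound is exactly the obstruction, and then carry out the construction.

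For the forward direction I would assume $\SetFO[T]$ exists, with generic model $G_T$, and show that the assignment sending an infinitary first-order formula in context $\phi(x)$ to the subobject $\{x : \phi(x)\}^{G_T} \leq X^{G_T}$ descends to an \emph{injection} from infinitary first-order formulas modulo $T$-provable equivalence into $\bigsqcup_X \Sub_{\SetFO[T]}(X^{G_T})$. Injectivity is precisely strong completeness \thref{strong-completeness}(iii): the equality $\{x : \phi(x)\}^{G_T} = \{x : \psi(x)\}^{G_T}$ says both sequents $\phi \vdash_x \psi$ and $\psi \vdash_x \phi$ are valid in $G_T$, hence $T$-provable. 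As a topos is well-powered each $\Sub(X^{G_T})$ is a set, and there is only a set of contexts $x$ (finite strings over the set of sorts), so the domain of the injection is a set and $T$ is locally small. This is the mechanism of \thref{classifying-toposes-counterexamples} run in the contrapositive.

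For the converse I would first use local smallness to fix a cardinal $\kappa$: choose a set of representatives for the formulas modulo $T$-provable equivalence and take $\kappa$ large enough that every representative is $\kappa$-first-order, so that (invoking \thref{kappa-deduction-is-infinitary}, which makes the two deduction-systems agree on $\kappa$-first-order sequents) every infinitary first-order formula is $T$-provably equivalent to a $\kappa$-first-order one. I would then propose $\SetFO[T] := \Sh(\SynFO_\kappa(T), J_\kappa)$ with generic model $G_T := y(U^\kappa_T)$ and verify the defining property through the composite, natural in $\E$,
\[
\Open(\E, \Sh(\SynFO_\kappa(T), J_\kappa)) \simeq \Heyt_\kappa(\SynFO_\kappa(T), \E) \simeq \TMod(\E)_\kappa = \TMod(\E)_\infty,
\]
where the middle equivalence is \thref{functors-as-models}(iii). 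The final equality I would isolate as a short lemma using $\kappa$ a second time: for models of $T$ a homomorphism is $\kappa$-elementary iff $\infty$-elementary, since for infinitary first-order $\phi(x)$ with $\kappa$-first-order equivalent $\phi'(x)$ one has $\{x : \phi(x)\}^M = \{x : \phi'(x)\}^M$ and likewise in $N$ by soundness, so $\kappa$-elementarity on $\phi'$ upgrades to $\infty$-elementarity on $\phi$.

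The hard part will be the first equivalence, matching \emph{open} geometric morphisms into $\Sh(\SynFO_\kappa(T), J_\kappa)$ with $\kappa$-Heyting functors out of $\SynFO_\kappa(T)$. One direction is painless: by the continuous Diaconescu theorem \thref{continuous-diaconescu} and \thref{flat-continuous-is-geometric} a geometric morphism $f$ corresponds to the $\kappa$-geometric functor $F = f^* y$, and if $f$ is open then $f^*$ is $\infty$-Heyting (\thref{open-sub-open-geometric-morphism-is-heyting-sub-heyting}) and $y$ is $\kappa$-Heyting (\thref{yoneda-geometric-sub-heyting}), whence $F$ is $\kappa$-Heyting. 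The genuine obstacle is the converse, that $F$ being $\kappa$-Heyting forces $f^*$ to commute with $\forall_{(-)}$ on \emph{all} of the sheaf topos rather than merely on the subobjects in the image of $y$; this is the technical heart supplied by \cite{butz_classifying_1998}, and it is driven by the fact that every subobject in the topos is a $\kappa$-join of representable pieces, so that universal quantification in the topos can be computed from that in $\SynFO_\kappa(T)$. I would treat this identification as the crux and otherwise assemble naturality and the generic-model description routinely from \thref{functors-as-models}.
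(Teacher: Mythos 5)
Your forward direction is exactly the paper's: strong completeness (\thref{strong-completeness}) plus well-poweredness of toposes, just as in the implication (i) $\implies$ (ii) of \thref{sub-first-order-classifying-topos-existence}, and it is complete as you state it. Your backward direction also proposes the right topos, $\Sh(\SynFO_\kappa(T), J_\kappa)$ with generic model $y(U^\kappa_T)$. The gap is in your first equivalence, $\Open(\E, \Sh(\SynFO_\kappa(T), J_\kappa)) \simeq \Heyt_\kappa(\SynFO_\kappa(T), \E)$: its hard direction (a $\kappa$-Heyting functor induces an \emph{open} geometric morphism) is essentially the whole content of the theorem, and you discharge it by citing \cite{butz_classifying_1998} --- the very source whose Theorem 5.3 is the statement being proved. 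As a blind proof this is circular. What actually fills the hole, per the paper's sketch and its own sub-first-order analogue, is an auxiliary theory $\overline{T}$ (the analogue of $T^\sfo$ in \thref{theory-sfo}): using the site criterion of \thref{butz-johnstone-sub-open-condition} (in its open form), one axiomatises openness of the induced geometric morphism by sequents whose disjunctions are indexed by $J_\kappa$-closed sieves, and then one uses local smallness to show these extra sequents are equivalent to $\kappa$-first-order ones that are already derivable from $T$, so that $T$ and $\overline{T}$ coincide. That derivability argument is the mathematical heart, and it is absent from your proposal.

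Relatedly, your choice of $\kappa$ is too weak even to support the heuristic you offer for the crux. You take $\kappa$ just large enough that every infinitary first-order formula is $T$-provably equivalent to a $\kappa$-first-order one; but your ``driving fact'' --- that every subobject of a representable in the sheaf topos is a $\kappa$-join of representables, hence representable since $y$ preserves $\kappa$-joins --- requires the stronger condition that each context carries \emph{fewer than $\kappa$} formulas up to $T$-provable equivalence (the first-order analogue of $\kappa$-sub-local smallness in \thref{sub-locally-small}). With your $\kappa$ a single context may still carry $\geq \kappa$ inequivalent formulas, so $J_\kappa$-closed sieves need not be generated by fewer than $\kappa$ representables, subobjects of $y(A)$ need not be representable, and openness of $f$ can no longer be checked only on the image of $y$; your claimed equivalence is then unsupported. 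The repair is the paper's step (ii) $\implies$ (iii): fix a set $S$ of representatives and additionally require $\kappa > |S|$. With that repair, and with the $\overline{T}$ argument actually carried out rather than deferred, your outline becomes the intended proof; as written, it assumes its key step.
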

The left to right direction follows quickly from completeness, see for example also the proof of \thref{sub-first-order-classifying-topos-existence} (i) $\implies$ (ii). For the other direction they construct an extension of $T$, called $\overline{T}$, which is essentially the theory of open geometric morphisms into $\Sh(\SynFO_\kappa(T), J_\kappa)$. Then, using the local smallness condition, one can pick $\kappa$ such that $T$ and $\overline{T}$ are equivalent, and so we can take $\SetFO[T] = \Sh(\SynFO_\kappa(T), J_\kappa)$. We will follow this proof strategy, but restrict ourselves to sub-first-order logic everywhere and construct a theory of sub-open geometric morphisms $T^\sfo$. The characterisation of infinitary sub-first-order theories that admit a sub-first-order classifying topos is then very similar.
\begin{definition}
\thlabel{sub-locally-small}
Let $\kappa$ be a cardinal. A $\kappa$-sub-first-order theory $T$ is called \emph{$\kappa$-sub-locally small} if in any given context there are $< \kappa$ many $\kappa$-sub-first-order formulas, up to $T$-provable equivalence in the infinitary sub-first-order deduction-system.

We call $T$ \emph{sub-locally small} if there is only a set of infinitary sub-first-order formulas, up to $T$-provable equivalence in the infinitary sub-first-order deduction-system.
\end{definition}
We will see in \thref{sub-first-order-classifying-topos-existence} that a theory is sub-locally small if and only if it is $\kappa$-sub-locally small for some $\kappa$.

For the fact below we recall that $\kappa$-geometric functors are precisely the flat and $J_\kappa$-continuous functors (\thref{flat-continuous-is-geometric}).
\begin{fact}[{\cite[Corollary 1.4]{butz_classifying_1998}}]
\thlabel{butz-johnstone-sub-open-condition}
Let $\kappa$ be a cardinal and let $\C$ be a $\kappa$-geometric category and let $\E$ be a topos. Let $F: \C \to \E$ be a $\kappa$-geometric functor and let $f: \E \to \Sh(\C, J_\kappa)$ be the corresponding geometric morphism under the continuous version of Diaconescu's theorem (\thref{continuous-diaconescu}). Then $f$ is sub-open if and only if for each object $A$ in $\C$ and any two $J_\kappa$-closed sieves $S_1$ and $S_2$ on $A$ we have that the image of the induced map
\[
\coprod_{\{g: B \to A \mid g^*(S_1) = g^*(S_2)\}} F(B) \to F(A)
\]
is the subobject $f^*(S_1) \leftrightarrow f^*(S_2)$ of $F(A)$.
\end{fact}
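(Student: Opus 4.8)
The plan is to pass everything through the dictionary between subobjects of representables and $J_\kappa$-closed sieves, and then to recognise that the displayed condition is simply the assertion that $f^*$ preserves Heyting biconditionals of such subobjects. Since $J_\kappa$ is subcanonical (\thref{kappa-covering-subcanonical}), the ``furthermore'' part of \thref{continuous-diaconescu} gives $F = f^* y$, so for a subobject $S \leq y(A)$ the subobject $f^*(S) \leq f^*(y(A)) = F(A)$ is well defined. Recall (as in the proof of \thref{yoneda-preserves-implication}) that $\Sub(y(A))$ is the Heyting algebra of $J_\kappa$-closed sieves on $A$, in which the implication $S_1 \to S_2$ consists of those $g \colon B \to A$ with $g^*(S_1) \leq g^*(S_2)$. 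Hence the biconditional $S_1 \leftrightarrow S_2 = (S_1 \to S_2) \wedge (S_2 \to S_1)$ is exactly the sieve $\{ g \colon B \to A \mid g^*(S_1) = g^*(S_2) \}$ that indexes the coproduct in the statement.

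First I would show, unconditionally, that the image of the displayed map equals $f^*(S_1 \leftrightarrow S_2)$. Writing $T = S_1 \leftrightarrow S_2$, the closed sieve $T$ is the join $\bigvee_{g \in T} \Im(y(g))$ of the images of the representables it contains (again as in \thref{yoneda-preserves-implication}). As $f^*$ is the inverse image of a geometric morphism it preserves joins of subobjects and, being geometric, preserves images; therefore $f^*(T) = \bigvee_{g \in T} \Im(f^* y(g)) = \bigvee_{g \in T} \Im(F(g))$, which is precisely the image of $\coprod_{g \in T} F(B) \to F(A)$. Consequently the condition in the statement is equivalent to the single equality $f^*(S_1 \leftrightarrow S_2) = f^*(S_1) \leftrightarrow f^*(S_2)$ holding for every object $A$ and every pair of $J_\kappa$-closed sieves $S_1, S_2$ on $A$; that is, to $f^*$ preserving biconditionals of subobjects of representables.

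It then remains to prove that this last property is equivalent to sub-openness, i.e.\ to $f^*$ preserving implication on \emph{every} subobject poset (\thref{kinds-of-geometric-morphisms}). The forward implication is immediate: if $f$ is sub-open then $f^*$ preserves $\to$ and $\wedge$, hence $\leftrightarrow$. For the converse, preservation of biconditionals on representables already yields preservation of implication on representables, since $S_1 \to S_2 = S_1 \leftrightarrow (S_1 \wedge S_2)$ and $f^*$ preserves finite meets. The remaining work is to bootstrap from representables to an arbitrary object $E$ of $\Sh(\C, J_\kappa)$: I would cover $E$ by an epimorphism $p \colon \coprod_i y(A_i) \twoheadrightarrow E$, use that $f^*$ preserves epimorphisms, pullbacks and coproducts, that pullback along $p$ is conservative on subobjects (via the subobject classifier), and that in any topos every pullback functor is itself $\infty$-sub-Heyting (\thref{topos-infty-heyting}). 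This lets one compute implication on $\Sub(\coprod_i y(A_i)) \cong \prod_i \Sub(y(A_i))$ componentwise and transport the equality back along $p$.

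The main obstacle I expect is precisely this last bootstrapping step: the displayed condition only constrains $f^*$ on subobjects of representables, whereas sub-openness is a statement about all subobject posets, so one must genuinely exploit that representables generate together with the fact that Heyting implication in a topos is stable under pullback. Once those two ingredients are in place the reduction is a diagram chase, but obtaining the componentwise description of implication in a coproduct and verifying conservativity of $p^*$ are the points that require care.
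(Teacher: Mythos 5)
The paper itself gives no proof of this statement: it is imported as a Fact with a citation to \cite[Corollary 1.4]{butz_classifying_1998}, so there is no in-paper argument to compare yours against. Your proposal is correct, and it is essentially a reconstruction of the Butz--Johnstone argument: identify $\Sub(y(A))$ with the $J_\kappa$-closed sieves on $A$, so that the indexing sieve $\{g : B \to A \mid g^*(S_1) = g^*(S_2)\}$ is exactly $S_1 \leftrightarrow S_2$; observe unconditionally that the image of the displayed map is $f^*(S_1 \leftrightarrow S_2)$, since $f^*$ preserves images and arbitrary joins of subobjects and $S_1 \leftrightarrow S_2 = \bigvee_{g} \Im(y(g))$; and then reduce sub-openness to preservation of implication on subobjects of the representable generators. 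All the points you flag as delicate do go through: the presheaf formula for implication of sieves lands in closed sieves whenever the consequent is closed, so it computes implication in $\Sub_{\Sh}(y(A))$; the identity $S_1 \to S_2 = S_1 \leftrightarrow (S_1 \wedge S_2)$ together with preservation of finite meets recovers implication from biconditionals; $\Sub\bigl(\coprod_i y(A_i)\bigr) \cong \prod_i \Sub(y(A_i))$ as Heyting algebras because coproducts in a topos are disjoint and universal (any poset isomorphism preserves the Heyting structure, so componentwise computation is automatic); and conservativity of $p^*$ for an epimorphism $p$ is immediate from the subobject classifier, since $\chi_U \circ p = \chi_V \circ p$ forces $\chi_U = \chi_V$. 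Your bootstrapping step along $\coprod_i y(A_i) \twoheadrightarrow E$, using that $f^*$ preserves epimorphisms, coproducts and pullbacks and that pullback functors on subobject lattices of a topos are Heyting homomorphisms, is precisely the ``check it on a generating family'' strategy of the cited source, so nothing is missing.
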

We can use \thref{butz-johnstone-sub-open-condition} to axiomatise sub-open geometric morphisms. For this we will need to make use of implications, so we work in sub-first-order logic.
\begin{definition}
\thlabel{theory-sfo}
Let $\kappa$ be a cardinal and let $T$ be a $\kappa$-sub-first-order theory. We define $T^\sfo$ as the extension of $T$ where we add the following sequents. Let $[\phi(y)]$ be an object in $\SynSFO_\kappa(T)$ and let $S_1 = \{[\alpha_i(x_i, y)]\}_{i \in }$ and $S_2 = \{[\beta_j(x_j, y)]\}_{j \in J}$ be two $J_\kappa$-closed sieves on $[\phi(y)]$. Write $\{[\gamma_k(x_k, y)]\}_{k \in K}$ for the set of all those arrows $g$ into $[\phi(y)]$ such that $g^*(S_1) = g^*(S_2)$. For any such object and two such sieves we then add the double sequent
\[
\bigvee_{k \in K} \exists x_k \gamma(x_k, y) \dashv \vdash_y \bigvee_{i \in I} \exists x_i \alpha_i(x_i, y) \leftrightarrow \bigvee_{j \in J} \exists x_j \beta_j(x_j, y).
\]
\end{definition}
We note that if $T$ is $\kappa$-sub-first-order then $T^\sfo$ may not be $\kappa$-sub-first-order anymore: the sequents we add may contain bigger disjunctions. However, we only add a set of sequents, and all of these sequents are infinitary sub-first-order. So $T^\sfo$ is $\lambda$-sub-first-order for some $\lambda \geq \kappa$.
\begin{proposition}
\thlabel{theory-sfo-models-are-sub-open-geometric-morphisms}
Let $\kappa$ be a cardinal and let $T$ be a $\kappa$-sub-first-order theory. Then for every topos $\E$ there is an equivalence
\[
\SubOpen(\E, \Sh(\SynSFO_\kappa(T), J_\kappa)) \simeq \TMod[T^\sfo](\E)_{\infty \to},
\]
which is given by sending a sub-open geometric morphism $f$ to the model $f^*(y(U^\kappa_T))$.
\end{proposition}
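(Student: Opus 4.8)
The plan is to obtain the claimed correspondence as a refinement of the continuous version of Diaconescu's theorem. Abbreviate $\C := \SynSFO_\kappa(T)$ and $U := U^\kappa_T$. Combining \thref{continuous-diaconescu} with \thref{flat-continuous-is-geometric} gives an equivalence $\Topos(\E, \Sh(\C, J_\kappa)) \simeq \Geom_\kappa(\C, \E)$, under which a geometric morphism $f$ corresponds to the $\kappa$-geometric functor $F = f^* y$, and the associated model is $F(U) = f^*(y(U))$. The task therefore reduces to cutting out matching full subcategories on each side: I must show that $f$ is sub-open exactly when $F(U)$ is a model of $T^\sfo$, and that the natural transformations surviving on the left are precisely the $\infty$-sub-elementary morphisms on the right.

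The crux is translating the categorical sub-openness criterion \thref{butz-johnstone-sub-open-condition} into the sequents of \thref{theory-sfo}. Fix an object $[\phi(y)]$ and two $J_\kappa$-closed sieves $S_1 = \{[\alpha_i(x_i,y)]\}_{i \in I}$, $S_2 = \{[\beta_j(x_j,y)]\}_{j \in J}$, with $\{[\gamma_k(x_k,y)]\}_{k \in K}$ enumerating the arrows $g$ satisfying $g^*(S_1) = g^*(S_2)$. Because $F$ is $\kappa$-geometric it computes images and $<\kappa$-joins of geometric formulas correctly, so the image of the coproduct map $\coprod_k F(B_k) \to F([\phi(y)])$ is the join of the images $\Im F([\gamma_k]) = \{y : \exists x_k\,\gamma_k(x_k,y)\}^{F(U)}$, namely the interpretation of $\bigvee_k \exists x_k\,\gamma_k$. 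On the other side, each closed sieve corresponds as a subobject of $y([\phi(y)])$ to $\{y : \bigvee_i \exists x_i\,\alpha_i\}^{y(U)}$, respectively $\{y : \bigvee_j \exists x_j\,\beta_j\}^{y(U)}$; applying $f^*$, which preserves geometric formulas, identifies $f^*(S_1)$ and $f^*(S_2)$ with the interpretations of these disjunctions in $F(U)$, and their Heyting biconditional $f^*(S_1) \leftrightarrow f^*(S_2)$ with the interpretation of $(\bigvee_i \exists x_i\,\alpha_i) \leftrightarrow (\bigvee_j \exists x_j\,\beta_j)$. Thus the equality of subobjects demanded by \thref{butz-johnstone-sub-open-condition} is exactly the double sequent added in \thref{theory-sfo} holding in $F(U)$, and ranging over all objects and pairs of closed sieves yields: $f$ is sub-open iff $F(U)$ validates every added sequent of $T^\sfo$.

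At the object level I still need the base theory. When $f$ is sub-open, $f^*$ is $\infty$-sub-Heyting (\thref{open-sub-open-geometric-morphism-is-heyting-sub-heyting}), so $F = f^* y$ is $\kappa$-sub-Heyting (as $y$ is, by \thref{yoneda-geometric-sub-heyting}); by \thref{functor-preserves-formulas} it preserves interpretations of sub-first-order formulas, whence the sequents of $T$, valid in $U$ by construction, stay valid in $F(U)$. Together with the previous paragraph, $f$ sub-open gives $F(U) \models T^\sfo$. Conversely, a model $M \models T^\sfo$ is in particular a model of $T$, so the construction in the proof of \thref{functors-as-models} produces a $\kappa$-sub-Heyting (hence $\kappa$-geometric) functor $F_M$ with $F_M(U) = M$; the geometric morphism $f$ it names under Diaconescu satisfies $f^*(y(U)) = M$, and since $M$ validates the added sequents, the translation above makes $f$ sub-open. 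This delivers essential surjectivity and the object-level bijection.

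Finally, the base equivalence already matches natural transformations $f^* \to g^*$ with natural transformations $F \to G$, and these with homomorphisms $h : F(U) \to G(U)$ whose component $h_X$ at a sort $X$ is $\eta_{X^{y(U)}}$. For sub-open $f, g$ I would upgrade $h$ to an $\infty$-sub-elementary morphism by arguing at the sheaf level: for any $\infty$-sub-first-order $\phi(x)$, naturality of $\eta : f^* \to g^*$ at the mono $\{x:\phi\}^{y(U)} \hookrightarrow X^{y(U)}$, together with $f^*, g^*$ being $\infty$-sub-Heyting (so that \thref{functor-preserves-formulas} identifies $f^*(\{x:\phi\}^{y(U)}) = \{x:\phi\}^{f^*(y(U))}$ and likewise for $g$), forces $\{x:\phi\}^{f^*(y(U))} \leq h_X^*(\{x:\phi\}^{g^*(y(U))})$; since this holds for all $\phi$, $h$ is $\infty$-sub-elementary. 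Conversely an $\infty$-sub-elementary morphism is $\kappa$-sub-elementary and so arises from a unique natural transformation via \thref{functors-as-models}. I expect the genuine obstacle to be the second paragraph: pinning down the coproduct-image and the biconditional of the inverse-image sieves as the syntactic disjunctions precisely, keeping the bookkeeping of contexts and closed sieves straight in both directions.
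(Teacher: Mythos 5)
Your proposal is correct and follows essentially the same route as the paper's proof: the base equivalence via the continuous Diaconescu theorem, the observation that the Butz--Johnstone sub-openness criterion translates (via preservation of images and joins) exactly into validity of the added sequents of $T^\sfo$ in $f^*(y(U^\kappa_T))$, the converse via \thref{functors-as-models}, and the identification of natural transformations with $\infty$-sub-elementary morphisms using that inverse images of sub-open morphisms are $\infty$-sub-Heyting. The paper compresses your second paragraph into the single remark that the sequents are ``a direct translation into internal logic'' of \thref{butz-johnstone-sub-open-condition}; your spelled-out version of that translation is the intended argument.
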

\begin{proof}
Let $f: \E \to \Sh(\SynSFO_\kappa(T), J_\kappa)$ be a sub-open geometric morphism. Then $f^*(y(U^\kappa_T))$ is a model of $T$. The extra sequents in $T^\sfo$ are a direct translation into internal logic of the condition in \thref{butz-johnstone-sub-open-condition}, so they are valid in $f^*(y(U^\kappa_T))$. Hence $f^*(y(U^\kappa_T))$ is a model of $T^\sfo$. Conversely, a model $M$ of $T^\sfo$ in $\E$ is in particular a model of $T$ and so corresponds to a $\kappa$-sub-Heyting functor $F: \SynSFO_\kappa(T) \to \E$ by \thref{functors-as-models}. As $F$ is in particular $\kappa$-geometric and its image, which is $M$, is a model of $T^\sfo$ we have that the geometric morphism $f: \E \to \Sh(\SynSFO_\kappa(T), J_\kappa)$ that corresponds to it under the continuous version of Diaconescu's theorem (\thref{continuous-diaconescu}) is sub-open. Finally, we get infinitary sub-elementary morphisms on the right-hand side because the arrows there correspond to natural transformation between the inverse image parts of sub-open geometric morphisms, which are $\infty$-sub-Heyting functors and so these natural transformations correspond to infinitary sub-elementary morphisms (see the proof of \thref{functors-as-models}).
\end{proof}
\begin{corollary}
\thlabel{theory-sfo-is-full-sub-first-order-theory-of-yoneda-syntactic-model}
Let $\kappa$ be a cardinal and $T$ a $\kappa$-sub-first-order theory. Then:
\begin{enumerate}[label=(\roman*)]
\item $y(U^\kappa_T)$ in $\Sh(\SynSFO_\kappa(T), J_\kappa)$ is a model of $T^\sfo$;
\item any infinitary sub-first-order sequent valid in $y(U^\kappa_T)$ is deducible from $T^\sfo$ in the infinitary sub-first-order deduction-system;
\item any $\kappa$-sub-first-order sequent that is deducible from $T^\sfo$ in the infinitary sub-first-order deduction-system is already deducible from $T$ in that same deduction-system;
\item every infinitary sub-first-order formula is equivalent to a disjunction of $\kappa$-sub-first-order formulas, modulo $T^\sfo$ in the infinitary sub-first-order deduction-system.
\end{enumerate}
\end{corollary}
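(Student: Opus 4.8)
The plan is to read all four parts off the equivalence of \thref{theory-sfo-models-are-sub-open-geometric-morphisms}, the completeness theorem \thref{intuitionistic-completeness-theorems}, and the description of subobjects in the sheaf topos. For (i) I would instantiate \thref{theory-sfo-models-are-sub-open-geometric-morphisms} at $\E = \Sh(\SynSFO_\kappa(T), J_\kappa)$ and evaluate the equivalence at the identity geometric morphism. The identity is sub-open, since its inverse image is the identity functor and so trivially preserves implication, and under the correspondence of that proposition it is sent to $\mathrm{id}^*(y(U^\kappa_T)) = y(U^\kappa_T)$. As the right-hand side of the equivalence consists of models of $T^\sfo$, this gives (i) at once.

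Parts (ii) and (iii) both pass through the genericity of $y(U^\kappa_T)$. For (ii) I would use that, by \thref{theory-sfo-models-are-sub-open-geometric-morphisms}, every model of $T^\sfo$ in any topos $\E$ has the form $f^*(y(U^\kappa_T))$ for a sub-open $f$, whose inverse image is $\infty$-sub-Heyting (\thref{open-sub-open-geometric-morphism-is-heyting-sub-heyting}) and hence preserves infinitary sub-first-order interpretations (\thref{functor-preserves-formulas}); so any infinitary sub-first-order sequent valid in $y(U^\kappa_T)$ is valid in all models of $T^\sfo$ in all toposes, and completeness \thref{intuitionistic-completeness-theorems} applied to the theory $T^\sfo$ gives deducibility from $T^\sfo$. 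For (iii) I would run the implication the other way: a $\kappa$-sub-first-order $\sigma$ deducible from $T^\sfo$ is valid in $y(U^\kappa_T)$ by soundness (\thref{soundness}) together with part (i), and since $\sigma$ is $\kappa$-sized, \thref{validity-in-yoneda-universal-model} turns this validity into deducibility from $T$ in the $\kappa$-sub-first-order deduction-system, which \thref{kappa-deduction-is-infinitary} lifts to the infinitary sub-first-order deduction-system.

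The substance is in (iv), and the work lies in describing the subobject $\{x : \chi(x)\}^{y(U^\kappa_T)}$ for an arbitrary infinitary sub-first-order $\chi(x)$. It is a subobject of $X^{y(U^\kappa_T)} = y([\top(x)])$, i.e.\ a $J_\kappa$-closed sieve $S$ on $[\top(x)]$, and as in the proof of \thref{yoneda-preserves-implication} such an $S$ is the join $\bigvee \{ y(C) : y(C) \leq S \}$. By fullness and faithfulness of $y$ each representable subobject $y(C) \leq y([\top(x)])$ comes from a subobject $C \hookrightarrow [\top(x)]$ in $\SynSFO_\kappa(T)$, and these are exactly the $[\psi(x)]$ for $\kappa$-sub-first-order $\psi$, with $y([\psi(x)]) = \{x : \psi(x)\}^{y(U^\kappa_T)}$. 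Assembling a family $\{\psi_k\}_{k \in K}$ of these and interpreting disjunction as join gives $\{x : \chi(x)\}^{y(U^\kappa_T)} = \{x : \bigvee_{k \in K} \psi_k(x)\}^{y(U^\kappa_T)}$, so that $\chi(x) \dashv\vdash_x \bigvee_{k \in K} \psi_k(x)$ is valid in $y(U^\kappa_T)$; part (ii) then converts this into a $T^\sfo$-provable equivalence in the infinitary sub-first-order deduction-system, exhibiting $\chi$ as equivalent to a disjunction of $\kappa$-sub-first-order formulas. The main obstacle I anticipate is the bookkeeping in (iv): one must check that the index set $K$ is a genuine set and that $\bigvee_{k} \psi_k$ is itself a legitimate infinitary sub-first-order formula, both of which rest on the well-poweredness of $\SynSFO_\kappa(T)$ (cf.\ \thref{types-of-syntactic-categories}) that forces $\kappa$ to be a cardinal rather than $\infty$.
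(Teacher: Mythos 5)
Your proposal is correct and follows essentially the same route as the paper's own proof: (i) via the identity geometric morphism under the equivalence of \thref{theory-sfo-models-are-sub-open-geometric-morphisms}, (ii) via that equivalence plus completeness \thref{intuitionistic-completeness-theorems}, (iii) via soundness and \thref{validity-in-yoneda-universal-model}, and (iv) by writing the $J_\kappa$-closed sieve interpreting a formula as a join of representables and then invoking (ii). Your version simply spells out the details (including the smallness bookkeeping in (iv)) that the paper's terse proof leaves implicit.
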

Note that (i) and (ii) above say that the $T^\sfo$ is essentially the full infinitary sub-first-order theory of $y(U^\kappa_T)$, while (iv) tells us that $T^\sfo$ is sub-locally small.
\begin{proof}
We give a brief argument each item.
\begin{enumerate}[label=(\roman*)]
\item We have that $y(U^\kappa_T)$ corresponds to the identity on $\Sh(\SynSFO_\kappa(T), J_\kappa)$ under the equivalence in \thref{theory-sfo-models-are-sub-open-geometric-morphisms}.
\item This follows from completeness \thref{intuitionistic-completeness-theorems} and \thref{theory-sfo-models-are-sub-open-geometric-morphisms}.
\item Any $\kappa$-sub-first-order sequent that is deducible from $T^\sfo$ must be valid in $y(U^\kappa_T)$ by item (i) and soundness. The result then follows from \thref{validity-in-yoneda-universal-model}.
\item Subobjects in $y(U^\kappa_T)$ correspond to $J_\kappa$-closed sieves in $\SynSFO_\kappa(T)$. As a subobject they are disjunctions of representable subobjects, which in turn correspond to $\kappa$-sub-first-order formulas. Now use item (ii).
\end{enumerate}
\end{proof}
\begin{theorem}
\thlabel{sub-first-order-classifying-topos-existence}
The following are equivalent for an infinitary sub-first-order theory $T$.
\begin{enumerate}[label=(\roman*)]
\item The sub-first-order classifying topos $\SetSFO[T]$ exists.
\item The theory $T$ is sub-locally small.
\item There is $\kappa$ such that $T$ is $\kappa$-sub-locally small.
\end{enumerate}
Furthermore, if $\kappa$ is as in (iii) then we may take $\SetSFO[T] = \Sh(\SynSFO_\kappa(T), J_\kappa)$ with $G_T = y(U^\kappa_T)$.
\end{theorem}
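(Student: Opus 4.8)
The plan is to prove the cycle of implications (i) $\Rightarrow$ (ii) $\Rightarrow$ (iii) $\Rightarrow$ (i) and to read off the ``furthermore'' clause directly from the construction carried out in the step (iii) $\Rightarrow$ (i). The heart of the argument is this last implication; the first two are comparatively routine, and the ``furthermore'' clause is just the explicit witness produced along the way.

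For (i) $\Rightarrow$ (ii) I would use strong completeness \thref{strong-completeness}(ii): if $\SetSFO[T]$ exists with generic model $G_T$, then an infinitary sub-first-order sequent is $T$-provable in the infinitary sub-first-order deduction-system exactly when it is valid in $G_T$. Hence two formulas in a common context are $T$-provably equivalent iff they have equal interpretations as subobjects of $X^{G_T}$, so $[\phi(x)] \mapsto \{x : \phi(x)\}^{G_T}$ embeds the equivalence classes in context $x$ into $\Sub(X^{G_T})$. Since $\SetSFO[T]$ is a topos it is well-powered, so each such poset is a set; as there is only a set of contexts for a fixed signature, there is only a set of infinitary sub-first-order formulas up to $T$-provable equivalence, i.e.\ $T$ is sub-locally small. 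For (ii) $\Rightarrow$ (iii), sub-local smallness gives a set $S$ of representatives; each representative, and each sequent of $T$, is $\lambda$-sub-first-order for some $\lambda$, so choosing a cardinal $\kappa$ exceeding $|S|$ and all these $\lambda$'s makes $T$ a $\kappa$-sub-first-order theory. In any fixed context the $\kappa$-sub-first-order formulas form a subclass of the infinitary ones, hence fall into at most $|S| < \kappa$ classes modulo $T$, so $T$ is $\kappa$-sub-locally small.

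The main step is (iii) $\Rightarrow$ (i), where the plan is to show that $\Sh(\SynSFO_\kappa(T), J_\kappa)$ with generic model $y(U^\kappa_T)$ is the sub-first-order classifying topos. By \thref{theory-sfo-models-are-sub-open-geometric-morphisms} there already is an equivalence $\SubOpen(\E, \Sh(\SynSFO_\kappa(T), J_\kappa)) \simeq \TMod[T^\sfo](\E)_{\infty \to}$, implemented by $f \mapsto f^*(y(U^\kappa_T))$, so it suffices to prove that under $\kappa$-sub-local smallness $T^\sfo$ and $T$ are deductively equivalent in the infinitary sub-first-order deduction-system; then $\TMod[T^\sfo](\E)_{\infty \to} = \TMod(\E)_{\infty \to}$ for every topos $\E$ (toposes being sound for this logic), matching \thref{classifying-toposes-def}. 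As $T \subseteq T^\sfo$, I only need each added sequent of $T^\sfo$ to be $T$-provable. Each such sequent is built from disjunctions $\bigvee_{i} \exists x_i \alpha_i(x_i, y)$ (and the analogous $\beta_j, \gamma_k$ disjunctions), whose disjuncts are all $\kappa$-sub-first-order formulas in context $y$. By $\kappa$-sub-local smallness these disjuncts fall into $< \kappa$ classes modulo $T$, so choosing one index per class shrinks each disjunction to one of size $< \kappa$ that is $T$-equivalent to the original (the infinitary deduction-system readily derives the equivalence of a disjunction with the subdisjunction over a chosen set of representatives). This rewrites the added sequent into a $T$-equivalent $\kappa$-sub-first-order sequent $\sigma'$; since $\sigma'$ is therefore deducible from $T^\sfo$, item (iii) of \thref{theory-sfo-is-full-sub-first-order-theory-of-yoneda-syntactic-model} yields that $\sigma'$, and hence the original sequent, is deducible from $T$.

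The main obstacle is precisely this last reduction. The sequents defining $T^\sfo$ may involve disjunctions of size $\geq \kappa$, so $T^\sfo$ need not itself be $\kappa$-sub-first-order, and one cannot appeal to conservativity naively. The content of $\kappa$-sub-local smallness is exactly what lets me collapse those large disjunctions to $\kappa$-sub-first-order data and then invoke the conservativity statement \thref{theory-sfo-is-full-sub-first-order-theory-of-yoneda-syntactic-model}(iii); this is where the hypothesis is genuinely used. Everything else — that $y(U^\kappa_T)$ is a model of $T$ (inherited from $U^\kappa_T$ via the $\kappa$-sub-Heyting Yoneda embedding) and that the equivalence is implemented by $f \mapsto f^*(y(U^\kappa_T))$ — is inherited from the cited results, so the displayed construction of $\SetSFO[T]$ and $G_T$ in the ``furthermore'' clause falls out immediately.
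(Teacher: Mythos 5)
Your proposal is correct and follows essentially the same route as the paper: strong completeness for (i) $\Rightarrow$ (ii), a cardinality choice for (ii) $\Rightarrow$ (iii), and for (iii) $\Rightarrow$ (i) the identification of $T$ with $T^\sfo$ by collapsing the large disjunctions in the added sequents via $\kappa$-sub-local smallness and then invoking the conservativity statement \thref{theory-sfo-is-full-sub-first-order-theory-of-yoneda-syntactic-model}(iii) together with \thref{theory-sfo-models-are-sub-open-geometric-morphisms}. The only difference is presentational: you spell out the representative-choosing argument and the transfer of deducibility between $T$-equivalent sequents, which the paper leaves terse.
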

\begin{proof}
Deducability and equivalence of sequents and formulas in this proof is all taken with respect to the infinitary sub-first-order deduction-system, modulo $T$.

\underline{(i) $\implies$ (ii).} The deducible infinitary sub-first-order sequents are precisely those that are valid in $G_T$ in $\SetSFO[T]$ (\thref{strong-completeness}). So two infinitary sub-first-order formulas are equivalent if and only if they are interpreted as the same subobjects in $G_T$. Since there is only a set of subobjects in $G_T$ we conclude that $T$ must be sub-locally small.

\underline{(ii) $\implies$ (iii).} Given a set $S$ of representative formulas, we can take $\kappa$ to be such that $T$ is $\kappa$-sub-first-order, every formula in $S$ is $\kappa$-sub-first-order and $\kappa > |S|$.

\underline{(ii) $\implies$ (i).} Let $\kappa$ be such that $T$ is $\kappa$-sub-locally small. We have that the disjunctions in the extra sequents in $T^\sfo$ are all equivalent to a disjunction of size $< \kappa$. So the extra sequents in $T^\sfo$ are all equivalent to $\kappa$-sub-first-order sequents. By \thref{theory-sfo-is-full-sub-first-order-theory-of-yoneda-syntactic-model}(iii) they are then already deducible from $T$. We thus have that $T$ and $T^\sfo$ are equivalent, and so by \thref{theory-sfo-models-are-sub-open-geometric-morphisms} we have the following equivalence for every topos $\E$:
\[
\SubOpen(\E, \Sh(\SynSFO_\kappa(T), J_\kappa)) \simeq \TMod[T^\sfo](\E)_{\infty \to} = \TMod(\E)_{\infty \to},
\]
which is given by sending a sub-open geometric morphism $f$ to the model $f^*(y(U^\kappa_T))$.
\end{proof}
\begin{corollary}
\thlabel{kappa-sub-locally-small-corollaries}
Let $T$ be a $\kappa$-sub-locally small theory, then any infinitary sub-first-order formula is equivalent to a disjunction of $\kappa$-sub-first-order formulas, modulo $T$ in the infinitary sub-first-order deduction-system.
\end{corollary}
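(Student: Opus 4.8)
The plan is to obtain this as an immediate consequence of the work already done on $T^\sfo$, by combining two facts: that every infinitary sub-first-order formula decomposes as a disjunction of $\kappa$-sub-first-order formulas modulo $T^\sfo$, and that under the $\kappa$-sub-locally small hypothesis the theories $T$ and $T^\sfo$ prove exactly the same infinitary sub-first-order sequents. All provability below is with respect to the infinitary sub-first-order deduction-system.

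First I would invoke \thref{theory-sfo-is-full-sub-first-order-theory-of-yoneda-syntactic-model}(iv): for an arbitrary infinitary sub-first-order formula $\phi(x)$ there is a disjunction $\bigvee_{i \in I} \psi_i(x)$ of $\kappa$-sub-first-order formulas such that the double sequent $\phi(x) \dashv\vdash_x \bigvee_{i \in I} \psi_i(x)$ is deducible from $T^\sfo$. Next I would recall from the proof of the implication \underline{(ii) $\implies$ (i)} in \thref{sub-first-order-classifying-topos-existence} that, because $T$ is $\kappa$-sub-locally small, every extra sequent added in passing from $T$ to $T^\sfo$ is already deducible from $T$; since conversely $T \subseteq T^\sfo$, this shows that $T$ and $T^\sfo$ have the same deductive closure, i.e.\ they prove precisely the same infinitary sub-first-order sequents. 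Applying this to the witnessing double sequent above, I conclude that $\phi(x) \dashv\vdash_x \bigvee_{i \in I} \psi_i(x)$ is already deducible from $T$, which is exactly the claimed equivalence modulo $T$.

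There is no substantial obstacle here, as the corollary is a direct recombination of earlier results; the single point that warrants care is that the equivalence of $T$ and $T^\sfo$ must be used at the level of \emph{all} infinitary sub-first-order sequents, not merely $\kappa$-sized ones, since the disjunction $\bigvee_{i \in I} \psi_i(x)$ supplied by \thref{theory-sfo-is-full-sub-first-order-theory-of-yoneda-syntactic-model}(iv) may have index set $I$ of size $\geq \kappa$. This is legitimate precisely because the argument in \thref{sub-first-order-classifying-topos-existence} establishes outright deducibility of the additional sequents of $T^\sfo$ from $T$, so the two deductive closures coincide entirely rather than agreeing only on bounded disjunctions.
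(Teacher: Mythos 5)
Your proof is correct and follows essentially the same route as the paper's own argument: the paper likewise observes that the proof of \thref{sub-first-order-classifying-topos-existence} shows $T$ and $T^\sfo$ to be equivalent (since the extra sequents of $T^\sfo$ are outright deducible from $T$ when $T$ is $\kappa$-sub-locally small) and then applies \thref{theory-sfo-is-full-sub-first-order-theory-of-yoneda-syntactic-model}(iv). Your explicit remark that the equivalence of deductive closures holds for all infinitary sub-first-order sequents, not merely $\kappa$-sized ones, is a careful articulation of exactly why the paper's ``we can immediately apply'' step is legitimate.
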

\begin{proof}
This could be deduced from the construction of $\SetSFO[T]$ and strong completeness (see the proof of \thref{classically-kappa-locally-small-corollaries}(ii)), but it also follows from the proof of \thref{sub-first-order-classifying-topos-existence}. There it is shown that $T$ and $T^\sfo$ are equivalent, so we can immediately apply \thref{theory-sfo-is-full-sub-first-order-theory-of-yoneda-syntactic-model}(iv).
\end{proof}
\begin{theorem}
\thlabel{topos-as-sub-first-order-classifying-topos}
Every topos is the sub-first-order classifying topos of some infinitary sub-first-order theory.
\end{theorem}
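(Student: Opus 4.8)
The plan is to realise the given topos $\E$ as $\Sh(\SynSFO_\kappa(T), J_\kappa)$ for a suitable $\kappa$-sub-first-order theory $T$ and cardinal $\kappa$, and then invoke \thref{sub-first-order-classifying-topos-existence}. That theorem tells us that once $T$ is known to be $\kappa$-sub-locally small, we have $\SetSFO[T] = \Sh(\SynSFO_\kappa(T), J_\kappa)$ with generic model $y(U^\kappa_T)$. So the whole task reduces to exhibiting such a $T$ and $\kappa$ with $\Sh(\SynSFO_\kappa(T), J_\kappa) \simeq \E$, which I would do by first presenting $\E$ by an internal site and then axiomatising that site.

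First I would present $\E$ by a small internal site. Since $\E$ is well-powered and locally small, I can choose a small full subcategory $\C \hookrightarrow \E$ whose objects form a separating family and which is closed in $\E$ under finite limits, images, all subobjects, and Heyting implication of subobjects. Being closed under subobjects, $\Sub_\C(X) = \Sub_\E(X)$ is a complete Heyting algebra for every $X$ in $\C$, computed exactly as in $\E$, so $\C$ is an $\infty$-sub-Heyting (hence $\kappa$-sub-Heyting by \thref{types-of-syntactic-categories}) subcategory of $\E$. I then fix $\kappa$ larger than $|\Sub_\E(X)|$ for every object $X$ of $\C$ and larger than the number of morphisms of $\C$. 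The key point of this choice is that the $\kappa$-covering topology $J_\kappa$ on $\C$ (subcanonical by \thref{kappa-covering-subcanonical}) coincides with the topology of jointly epimorphic families inherited from $\E$: a sieve on $Y$ is jointly epimorphic iff the join of the images of its members equals $Y$, and since that join ranges over the set $\Sub_\E(Y)$ of size $< \kappa$, it is already attained by a subfamily of size $< \kappa$, i.e.\ by a $\kappa$-covering family. As $\C$ is a small separating full subcategory and $J_\kappa$ is this canonical topology, the Comparison Lemma (Giraud's theorem) yields $\Sh(\C, J_\kappa) \simeq \E$.

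Next I would write down the internal theory $T$ of $\C$: a sort $\lceil X\rceil$ for each object $X$, a function symbol for each morphism, and a relation symbol for each subobject, together with sub-first-order axioms expressing functoriality, the chosen finite limits and images, and the Heyting-algebra operations on each subobject lattice. All these axioms are $\kappa$-sub-first-order by the choice of $\kappa$, so $T$ is a $\kappa$-sub-first-order theory, and interpreting each symbol by itself exhibits $\C$ as a model of $T$, i.e.\ a $\kappa$-sub-Heyting functor $M : \SynSFO_\kappa(T) \to \C$ via \thref{functors-as-models}. The hard part will be checking that $M$ is an equivalence, that is, that $T$ recovers $\C$ as its syntactic category. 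Essential surjectivity is immediate from the sorts; fullness and the bijection $\Sub_{\SynSFO_\kappa(T)}(\lceil X\rceil) \cong \Sub_\C(X)$ amount to showing that every $\kappa$-sub-first-order formula is $T$-provably equivalent to a relation symbol and that two formulas with equal interpretation in $\C$ are $T$-provably equivalent. This is the standard internal-language presentation of a $\kappa$-sub-Heyting category and is where essentially all the bookkeeping sits; by \thref{functors-as-models} it is equivalent to verifying that models of $T$ in any $\kappa$-sub-Heyting $\D$ correspond to $\kappa$-sub-Heyting functors $\C \to \D$.

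Finally, since $\Sub_{\SynSFO_\kappa(T)}(\lceil X\rceil) \cong \Sub_\E(X)$ has fewer than $\kappa$ elements for each sort, the theory $T$ is $\kappa$-sub-locally small in the sense of \thref{sub-locally-small}. Applying \thref{sub-first-order-classifying-topos-existence} with this $\kappa$ then gives $\SetSFO[T] = \Sh(\SynSFO_\kappa(T), J_\kappa) \simeq \Sh(\C, J_\kappa) \simeq \E$, as required. The single genuine obstacle is the internal-language equivalence $\SynSFO_\kappa(T) \simeq \C$; the recovery $\Sh(\C, J_\kappa) \simeq \E$ and the cardinality bookkeeping are routine once $\kappa$ is chosen as above.
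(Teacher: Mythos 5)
Your proposal is correct in outline, but it takes a genuinely different route from the paper. The paper never presents $\E$ as a sub-first-order syntactic sheaf topos at all: it picks a geometric theory $T$ with $\E \simeq \Set[T] = \Sh(\SynG_\kappa(T), J_\kappa)$, defines $T'$ by adding to $T$ the sub-openness sequents of \thref{theory-sfo} formed from the objects and closed sieves of the geometric site $(\SynG_\kappa(T), J_\kappa)$, and then verifies the universal property $\SubOpen(\F, \E) \simeq \TMod[T'](\F)_{\infty \to}$ directly, by the same argument as \thref{theory-sfo-models-are-sub-open-geometric-morphisms} (i.e.\ via \thref{butz-johnstone-sub-open-condition}); no local-smallness check and no identification of any syntactic category is needed. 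Your route instead funnels everything through \thref{sub-first-order-classifying-topos-existence}, and therefore through the internal-language equivalence $\SynSFO_\kappa(T) \simeq \C$. That equivalence is true and provable by the usual Makkai--Reyes-style induction, but it appears nowhere in the paper's toolkit, so on your route it must be established from scratch for sub-Heyting categories; note that the axioms you list (functoriality, chosen limits, images, Heyting operations) are not quite sufficient as stated --- you also need the substitution axioms identifying relation symbols applied to terms with the relation symbols of pulled-back subobjects, and axioms identifying $\kappa$-small joins of relation symbols with the relation symbols of the corresponding joins, or the induction ``every formula is provably equivalent to a relation symbol'' breaks at the atomic and infinitary-disjunction steps. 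Your remaining steps are sound: the small full separating subcategory $\C$ exists by well-poweredness and local smallness of $\E$, your choice of $\kappa$ does make $J_\kappa$ coincide with the topology induced by the canonical topology of $\E$, the Comparison Lemma gives $\Sh(\C, J_\kappa) \simeq \E$, and $\kappa$-sub-local smallness of the internal theory is then immediate. (Two minor slips: \thref{types-of-syntactic-categories} is not the statement that $\infty$-sub-Heyting categories are $\kappa$-sub-Heyting, which is anyway immediate from \thref{geometric-heyting-boolean-category}; and what you invoke is the Comparison Lemma for a separating full subcategory rather than Giraud's theorem.) What your approach buys is a more intrinsic theory --- the internal theory of a dense sub-Heyting piece of $\E$ --- plus the extra fact that every topos is $\Sh(\SynSFO_\kappa(T), J_\kappa)$ for some $\kappa$-sub-locally small $T$; what the paper's approach buys is a three-line proof that reuses the $T^\sfo$ machinery wholesale.
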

\begin{proof}
Let $\E$ be any topos and let $T$ be a geometric theory such that $\E \simeq \Set[T]$ (e.g.\ take $T$ to be the theory of geometric morphisms into $\E$, see for example \cite[Theorem 2.1.11]{caramello_theories_2017}). Let $\kappa$ be such that $T$ is $\kappa$-geometric, so we have $\E = \Sh(\SynG_\kappa(T), J_\kappa)$. Define $T'$ by adding sequents like in \thref{theory-sfo}, but we restrict ourselves to objects and closed sieves in the site $(\SynG_\kappa(T), J_\kappa)$. Then by the same proof as \thref{theory-sfo-models-are-sub-open-geometric-morphisms} we have for all toposes $\F$ that
\[
\SubOpen(\F, \E) \simeq \TMod[T'](\F)_{\infty \to}, 
\]
which establishes that $\E$ is the sub-first-order classifying topos of $T'$.
\end{proof}

\section{The Boolean classifying topos}
\label{sec:the-boolean-classifying-topos}
The construction of the Boolean classifying topos is easier than that of the (sub-)first-order classifying topos, because any geometric morphism into a Boolean topos is automatically open and so we do not need to concern ourselves with the theory of (sub-)open geometric morphisms. This also makes the proof very different in nature, as the key point is now to prove that the topos we construct is Boolean. Interestingly, the characterisation remains similar.
\begin{definition}
\thlabel{classically-locally-small}
Let $\kappa$ be a cardinal. A $\kappa$-first-order theory $T$ is called \emph{classically $\kappa$-locally small} if in any given context there are $< \kappa$ many $\kappa$-first-order formulas, up to $T$-provable equivalence in the infinitary classical deduction-system.

We call $T$ \emph{classically locally small} if there is only a set of infinitary first-order formulas, up to $T$-provable equivalence in the infinitary classical deduction-system.
\end{definition}
Just like for sub-locally small, we have that a theory $T$ is classically locally small if and only if it is classically $\kappa$-locally small for some $\kappa$, this time by \thref{boolean-classifying-topos-existence}.
\begin{definition}
\thlabel{syntactic-category-of-consistent-formulas}
Let $\kappa$ be a cardinal. For a $\kappa$-first-order theory $T$ let $\SynConsB_\kappa(T)$ be the full subcategory of $\SynB_\kappa(T)$ consisting of those $[\phi(x)]$ such that $\phi$ is consistent with $T$ modulo classical provability (equivalently: $[\phi(x)]$ is not isomorphic to $[\bot(x)]$). We also denote by $J_\kappa$ the topology on $\SynConsB_\kappa(T)$ that is induced by the topology $J_\kappa$ on $\SynB_\kappa(T)$.
\end{definition}
The above definition is inspired by \cite{blass_boolean_1983} and its point is to show that $J_\kappa = J_{\neg \neg}$ so that the resulting topos is Boolean. However, for $\SynB_\kappa(T)$ this cannot be done, because then $J_\kappa$ has empty covering sieves. Those are exactly the sieves on the objects represented by inconsistent formulas, and by throwing those away we do not change the topos of sheaves anyway.

For the reader's convenience we recall the definition of the $\neg \neg$-topology.
\begin{definition}
\thlabel{double-negation-topology}
The \emph{$\neg \neg$-topology} $J_{\neg \neg}$ on a small category $\C$ is defined as follows. A sieve $S$ on an object $C$ is covering if for any $f: D \to C$ there is $g: E \to D$ such that $fg \in S$.
\end{definition}
\begin{lemma}
\thlabel{j_kappa-subseteq-j_negneg}
Let $\kappa$ be a cardinal and $T$ a $\kappa$-first-order theory. Then we have $J_\kappa \subseteq J_{\neg \neg}$ as topologies on $\SynConsB_\kappa(T)$.
\end{lemma}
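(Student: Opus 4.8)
The plan is to unwind the definition of the double-negation topology (\thref{double-negation-topology}) and reduce the inclusion to the stability of $<\kappa$-indexed joins under pullback, which is available because $\SynB_\kappa(T)$ is $\kappa$-Boolean, and in particular $\kappa$-geometric (\thref{types-of-syntactic-categories}(iv) together with \thref{geometric-heyting-boolean-category}). The single feature of passing to $\SynConsB_\kappa(T)$ that I will exploit is that every object $[\psi(z)]$ is consistent, so its subobject lattice is nontrivial, i.e.\ $\top \neq \bot$ in $\Sub([\psi(z)])$. This is exactly the ingredient that lets a $\kappa$-covering family retain a non-initial member after pulling back, which is what turns a $J_\kappa$-cover into a $J_{\neg\neg}$-cover.

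Concretely, let $S$ be a $J_\kappa$-covering sieve on an object $[\phi(y)]$. By \thref{kappa-covering-topology} it contains a $\kappa$-covering family $\{f_i : A_i \to [\phi(y)]\}_{i \in I}$ with $|I| < \kappa$ and $\bigvee_{i \in I} \Im f_i = [\phi(y)]$. To verify the $J_{\neg\neg}$-condition I would fix an arbitrary arrow $g : [\psi(z)] \to [\phi(y)]$ in $\SynConsB_\kappa(T)$ and pull the whole family back along $g$, producing pullback squares $g p_i = f_i q_i$ with $p_i : P_i \to [\psi(z)]$ and $q_i : P_i \to A_i$. Using that images are stable under pullback (Beck--Chevalley in a regular category), so that $\Im p_i = g^*(\Im f_i)$, together with the fact that $g^*$ preserves $<\kappa$-joins and the top subobject, I obtain
\[
\bigvee_{i \in I} \Im p_i = g^*\Bigl(\bigvee_{i \in I} \Im f_i\Bigr) = g^*([\phi(y)]) = [\psi(z)],
\]
so the pulled-back family is again a $\kappa$-covering family, now of $[\psi(z)]$.

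Here the consistency of $[\psi(z)]$ enters: since $\bigvee_{i} \Im p_i = \top \neq \bot$ in $\Sub([\psi(z)])$, not every $\Im p_i$ can equal $\bot$, so some $\Im p_i \neq \bot$. For that index $i$ the object $P_i$ cannot be isomorphic to $[\bot]$ (otherwise $\Sub(P_i)$ would be trivial and $\Im p_i = \exists_{p_i}(\top_{P_i}) = \bot$), hence $P_i$ is again an object of $\SynConsB_\kappa(T)$ and $p_i$ is an arrow there. Finally $g p_i = f_i q_i \in S$, because $f_i \in S$ and $S$ is a sieve, which is precisely the witness required by \thref{double-negation-topology}. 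Thus every $J_\kappa$-covering sieve is $J_{\neg\neg}$-covering.

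The only real (and minor) subtlety I anticipate is justifying that the chosen pullback $P_i$ genuinely lives in the full subcategory $\SynConsB_\kappa(T)$, i.e.\ that discarding the inconsistent formulas does not destroy the witness. This is exactly where the design of $\SynConsB_\kappa(T)$ pays off: on an object isomorphic to $[\bot]$ the identity $\bigvee_i \Im p_i = \top$ would be vacuous (and would even permit the empty sieve to cover), whereas on a consistent object it forces a non-initial summand. Everything else is a routine transcription of stability of covering families under pullback, and the only computations I would spell out are the Beck--Chevalley identity $\Im p_i = g^*(\Im f_i)$ and the preservation of the top subobject by $g^*$.
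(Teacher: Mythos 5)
Your proof is correct and takes essentially the same route as the paper's: the paper likewise pulls the $\kappa$-covering family back along an arbitrary arrow $[\theta(x,y)]: [\psi(x)] \to [\phi(y)]$ and uses consistency of the domain to find a member of the pulled-back family that is not $[\bot]$, which then supplies the witness required by \thref{double-negation-topology}. The only difference is presentational: the paper phrases the argument syntactically (the pullback is $[\theta(x,y) \wedge \sigma_i(x_i,y)]$, image stability is Frobenius, join stability is the infinitary distributive law, and ``some $\Im p_i \neq \bot$'' becomes a contradiction from assuming all conjunctions are inconsistent), whereas you phrase it categorically via Beck--Chevalley and stability of $<\kappa$-joins under pullback.
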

\begin{proof}
Deducability and equivalence of formulas in this proof is all taken with respect to the infinitary classical deduction-system, modulo $T$.

Let $S$ be a covering sieve on $[\phi(y)]$ in $J_\kappa$ and let $[\theta(x, y)]: [\psi(x)] \to [\phi(y)]$ be an arrow in $\SynConsB_\kappa(T)$. By definition, there is a $\kappa$-covering family $\{[\sigma_i(x_i, y)]: [\chi_i(x_i)] \to [\phi(y)]\}_{i \in I}$ in $S$.

Suppose, for a contradiction, that for every $i \in I$ the formula $\theta(x, y) \wedge \exists x_i \sigma_i(x_i, y)$ is inconsistent with $T$. Then $\theta(x, y) \wedge \bigvee_{i \in I} \exists x_i \sigma_i(x_i, y)$ is inconsistent with $T$. However, this formula is equivalent to $\theta(x, y)$ because $\{[\sigma_i(x_i, y)]\}_{i \in I}$ is a $\kappa$-covering family. Therefore, $\theta(x, y)$ would be inconsistent with $T$ and cannot be in $\SynConsB_\kappa(T)$.

There must thus be some $i \in I$ such that $\theta(x, y) \wedge \exists x_i \sigma_i(x_i, y)$ is consistent with $T$. Hence, $\theta(x, y) \wedge \sigma_i(x_i, y)$ is consistent with $T$ and the following commuting diagram exists in $\SynConsB_\kappa(T)$:
\[
\begin{tikzcd}
{\theta(x, y) \wedge \sigma_i(x_i, y)} \arrow[rr, "g"] \arrow[d, "f"'] &  & \chi_i(x_i) \arrow[d, "{[\sigma_i(x_i, y)]}"] \\
\psi(x) \arrow[rr, "{[\theta(x,y)]}"']                                 &  & \phi(y)                                      
\end{tikzcd}
\]
where $f$ and $g$ are the (formal) projections of $x$ and $x_i$ respectively. We thus have that $[\theta(x,y)] \circ f = [\sigma_i(x_i, y)] \circ g$ is in $S$. We conclude that $S$ is also a covering sieve for $J_{\neg \neg}$, as required.
\end{proof}
\begin{lemma}
\thlabel{j_negneg-subseteq-j_kappa}
Let $\kappa$ be a cardinal and $T$ a classically $\kappa$-locally small theory. Then we have $J_{\neg \neg} \subseteq J_\kappa$ as topologies on $\SynConsB_\kappa(T)$.
\end{lemma}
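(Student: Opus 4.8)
The plan is to prove the reverse containment by exploiting classical $\kappa$-local smallness to replace an arbitrary $J_{\neg \neg}$-covering sieve by a genuine $\kappa$-covering family, and then to use the density of the double-negation topology together with Boolean complements in $\SynB_\kappa(T)$ to force that family to cover. So the target is: given a $J_{\neg \neg}$-covering sieve $S$ on $[\phi(y)]$ in $\SynConsB_\kappa(T)$, produce a $\kappa$-covering family contained in $S$.

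First I would take $S$ and examine the set of images $\{\Im f : f \in S\}$, viewed as subobjects of $[\phi(y)]$ in the ambient $\kappa$-Boolean category $\SynB_\kappa(T)$. Each such image is nonzero (consistent), since its domain lies in $\SynConsB_\kappa(T)$ and $[\bot]$ is a strict initial object of $\SynB_\kappa(T)$. By classical $\kappa$-local smallness there are fewer than $\kappa$ subobjects of $[\phi(y)]$ up to provable equivalence, so only $< \kappa$ distinct images occur among the $f \in S$. Selecting one arrow $f_i \in S$ realising each distinct image yields a subfamily $\{f_i\}_{i \in I} \subseteq S$ with $|I| < \kappa$ and $\bigvee_{i \in I} \Im f_i = \bigvee_{f \in S} \Im f =: U$.

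The key step is then to show $U = [\phi(y)]$; once this holds, $\{f_i\}_{i \in I}$ is a $\kappa$-covering family inside $S$, so $S \in J_\kappa$. Suppose instead $U < [\phi(y)]$. Working in the Boolean algebra $\Sub([\phi(y)])$ in $\SynB_\kappa(T)$, the complement $\neg U$ is then nonzero, hence a consistent formula, so its representing monomorphism $m : \neg U \hookrightarrow [\phi(y)]$ is an arrow of $\SynConsB_\kappa(T)$. Applying the definition of $J_{\neg \neg}$-covering to $m$ gives $g : E \to \neg U$ in $\SynConsB_\kappa(T)$ with $mg \in S$. Since $E$ is consistent it is nonzero, so $\Im(mg) \neq 0$; but $\Im(mg) \leq \Im(m) = \neg U$ while $\Im(mg) \leq U$ because $mg \in S$, whence $\Im(mg) \leq U \wedge \neg U = 0$, a contradiction.

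The main obstacle I anticipate is the bookkeeping forced by passing to the full subcategory $\SynConsB_\kappa(T)$ of consistent formulas: I must ensure that $\neg U$ is genuinely an object of this subcategory (which is exactly where the assumption $U < [\phi(y)]$ supplies consistency), that images of arrows between consistent formulas are nonzero (which relies on strictness of the initial object $[\bot]$), and that the joins and complements are computed in the ambient $\kappa$-Boolean category $\SynB_\kappa(T)$ even though the covering condition is read off in $\SynConsB_\kappa(T)$. Apart from these compatibility checks, the argument is the standard Boolean-complement trick for the dense topology, and I expect the details to be routine.
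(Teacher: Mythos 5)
Your proof is correct and takes essentially the same route as the paper's: you use classical $\kappa$-local smallness to extract from the $J_{\neg \neg}$-covering sieve a subfamily of size $<\kappa$ realising all images, then take the Boolean complement of the join of those images, apply the density condition of $J_{\neg \neg}$ to its inclusion, and contradict consistency (strictness of the initial object). The paper writes this complement as the explicit formula $\eta(y) = \phi(y) \wedge \neg \bigvee_{i \in I_0} \exists x_i\, \theta_i(x_i, y)$ where you write $\neg U$, but the two arguments are the same.
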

\begin{proof}
Deducability and equivalence of formulas in this proof is all taken with respect to the infinitary classical deduction-system, modulo $T$.

We could apply \cite[Corollary 3.13]{caramello_morgan_2009}. However, we give a direct proof as it offers insight in how the assumption on $\kappa$ is used.

Let $S = \{[\theta_i(x_i, y)]\}_{i \in I}$ be a covering sieve on $[\phi(y)]$ in $J_{\neg \neg}$. By our assumption on $\kappa$ there is $I_0 \subseteq I$ such that $|I_0| < \kappa$ and for all $i \in I$ there is $i_0 \in I_0$ such that $\exists x_i \theta_i(x_i, y)$ and $\exists x_{i_0} \theta_{i_0}(x_{i_0}, y)$ are equivalent.

Assume for a contradiction that $\{[\theta_i(x_i,y)]\}_{i \in I_0}$ is not a $\kappa$-covering family. Define $\eta(y)$ to be
\[
\phi(y) \wedge \neg \bigvee_{i \in I_0} \exists x_i \theta_i(x_i, y),
\]
and note that $\eta(y)$ is consistent. Let $m$ denote the (formal) inclusion $[\eta(y)] \hookrightarrow [\phi(y)]$. Since $S$ is covering for $J_{\neg \neg}$ there must be an arrow $f$ into $[\eta(y)]$ such that $mf \in S$. That is, $mf = [\theta_i(x_i,y)]$ for some $i \in I$. We thus have the following commuting diagram
\[
\begin{tikzcd}
{[\chi(x_i)]} \arrow[r] \arrow[d, "f"'] \arrow[rd, "{[\theta_i(x_i, y)]}" description] & {[\exists x_i \theta_i(x_i, y)]} \arrow[d, "n"] \\
{[\eta(y)]} \arrow[r, "m"']                                                            & {[\phi(y)]}                                    
\end{tikzcd}
\]
So, in $\SynB_\kappa(T)$, $[\chi(x_i)]$ admits an arrow into the pullback of $m$ and $n$. However, letting $i_0 \in I_0$ be such that $\exists x_i \theta_i(x_i, y)$ and $\exists x_{i_0} \theta_{i_0}(x_{i_0}, y)$ are equivalent, this pullback is given by
\[
[\eta(y) \wedge \exists x_i \theta_i(x_i, y)] \cong
[\eta(y) \wedge \exists x_{i_0} \theta_{i_0}(x_{i_0}, y)] \cong
[\bot(y)],
\]
which implies that $\chi(x_i)$ is inconsistent. This contradicts $[\chi(x_i)]$, and hence $f$, being in $\SynConsB_\kappa(T)$. We conclude that $\{[\theta_i(x_i,y)]\}_{i \in I_0}$ must be a covering family, and hence $J_{\neg \neg} \subseteq J_\kappa$.
\end{proof}
\begin{theorem}
\thlabel{boolean-classifying-topos-existence}
The following are equivalent for an infinitary first-order theory $T$.
\begin{enumerate}[label=(\roman*)]
\item The Boolean classifying topos $\SetB[T]$ exists.
\item The theory $T$ is classically locally small.
\item There is $\kappa$ such that $T$ is classically $\kappa$-locally small.
\item There is $\kappa$ such that $\Sh(\SynB_\kappa(T), J_\kappa)$ is Boolean.
\end{enumerate}
Furthermore, if $\kappa$ is as in (iii) we may take the same $\kappa$ in (iv), and for $\kappa$ as in (iv) we have that $\SetB[T] = \Sh(\SynB_\kappa(T), J_\kappa)$ with $G_T = y(U^\kappa_T)$.
\end{theorem}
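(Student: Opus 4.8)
The plan is to prove the four conditions equivalent by the cycle (i) $\Rightarrow$ (ii) $\Rightarrow$ (iii) $\Rightarrow$ (iv) $\Rightarrow$ (i), mirroring the strategy of \thref{sub-first-order-classifying-topos-existence}, and to read off the two ``furthermore'' claims from the steps (iii) $\Rightarrow$ (iv) and (iv) $\Rightarrow$ (i). Throughout, deducibility and equivalence of formulas are taken in the infinitary classical deduction-system modulo $T$, since that is the system governing $\SynB_\kappa(T)$ and Boolean toposes.

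For (i) $\Rightarrow$ (ii) I would argue exactly as in (i) $\Rightarrow$ (ii) of \thref{sub-first-order-classifying-topos-existence}, using the Boolean case of strong completeness \thref{strong-completeness}(iv): the classically deducible sequents are precisely those valid in $G_T$ in $\SetB[T]$, so two infinitary first-order formulas in a common context are classically equivalent modulo $T$ if and only if they are interpreted as the same subobject of $X^{G_T}$. As $\SetB[T]$ is a topos it is well-powered, and since there is only a set of contexts there is only a set of infinitary first-order formulas up to classical equivalence, i.e.\ $T$ is classically locally small. For (ii) $\Rightarrow$ (iii) I would take a set $S$ of representatives of the classical equivalence classes and choose a cardinal $\kappa$ with $T$ $\kappa$-first-order, every formula in $S$ $\kappa$-first-order, and $\kappa > |S|$; then every $\kappa$-first-order formula is classically equivalent to a member of $S$, of which there are $< \kappa$ in any context, so $T$ is classically $\kappa$-locally small.

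The conceptual heart of the argument is (iii) $\Rightarrow$ (iv), where the point is that the relevant sheaf topos is Boolean. Here I would simply combine the two topology lemmas already in hand: \thref{j_kappa-subseteq-j_negneg} and \thref{j_negneg-subseteq-j_kappa} together give $J_\kappa = J_{\neg \neg}$ on $\SynConsB_\kappa(T)$ (the second inclusion being exactly where classical $\kappa$-local smallness is used). Since the objects of $\SynB_\kappa(T)$ covered by the empty $J_\kappa$-sieve are precisely the inconsistent ones, the comparison lemma identifies $\Sh(\SynB_\kappa(T), J_\kappa) \simeq \Sh(\SynConsB_\kappa(T), J_\kappa)$, as observed after \thref{syntactic-category-of-consistent-formulas}. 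Hence $\Sh(\SynB_\kappa(T), J_\kappa) \simeq \Sh(\SynConsB_\kappa(T), J_{\neg \neg})$, which is Boolean by the standard fact that double-negation sheaves form a Boolean topos (see e.g.\ \cite{maclane_sheaves_1994}). This uses the same $\kappa$ as in (iii), giving the first ``furthermore'' claim.

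Finally, for (iv) $\Rightarrow$ (i), write $\F = \Sh(\SynB_\kappa(T), J_\kappa)$ and assume it is Boolean. For any Boolean topos $\E$, the continuous Diaconescu theorem \thref{continuous-diaconescu} together with \thref{flat-continuous-is-geometric} and \thref{geometric-functors-as-boolean-models} (noting $\E$ is $\kappa$-Boolean) give an equivalence $\Topos(\E, \F) \simeq \Geom_\kappa(\SynB_\kappa(T), \E) \simeq \TMod(\E)_\kappa$, implemented by $f \mapsto f^*(y(U^\kappa_T))$; this settles the objects and essential surjectivity, with generic model $G_T = y(U^\kappa_T)$. The subtle point --- and the step I expect to need the most care --- is that the definition of $\SetB[T]$ asks for $\TMod(\E)_\infty$ rather than $\TMod(\E)_\kappa$. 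To close this gap I would argue as in the proof of \thref{boolean-classifying-topos-iff-homomorphisms}: since $\F$ is Boolean, every geometric morphism into it is open (\thref{boolean-iff-every-map-into-is-open}), so the inverse image parts are $\infty$-Heyting (\thref{open-sub-open-geometric-morphism-is-heyting-sub-heyting}) and preserve interpretations of all infinitary first-order formulas (\thref{functor-preserves-formulas}); naturality of a transformation $\eta : f^* \Rightarrow g^*$ against the subobject inclusions $\{x : \phi(x)\}^{G_T} \hookrightarrow X^{G_T}$ then forces the induced homomorphism to be $\infty$-elementary, for every infinitary first-order $\phi$. Thus the equivalence above factors through $\TMod(\E)_\infty \hookrightarrow \TMod(\E)_\kappa$ and remains an equivalence onto $\TMod(\E)_\infty$, so $\SetB[T]$ exists and equals $\F$ with $G_T = y(U^\kappa_T)$, yielding the second ``furthermore'' claim.
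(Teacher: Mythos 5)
Your proposal is correct and follows essentially the same route as the paper's proof: the cycle (i) $\Rightarrow$ (ii) $\Rightarrow$ (iii) $\Rightarrow$ (iv) $\Rightarrow$ (i), with (i) $\Rightarrow$ (ii) $\Rightarrow$ (iii) imported from \thref{sub-first-order-classifying-topos-existence}, the key step (iii) $\Rightarrow$ (iv) obtained by combining \thref{j_kappa-subseteq-j_negneg} and \thref{j_negneg-subseteq-j_kappa} with the Comparison Lemma, and (iv) $\Rightarrow$ (i) via the continuous Diaconescu theorem and \thref{geometric-functors-as-boolean-models}. The only cosmetic difference is that you inline the argument of \thref{boolean-classifying-topos-iff-homomorphisms} (openness of geometric morphisms into a Boolean topos forcing homomorphisms to be $\infty$-elementary) where the paper cites that corollary as a black box.
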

\begin{proof}
Deducability and equivalence of sequents and formulas in this proof is all taken with respect to the infinitary classical deduction-system, modulo $T$.

\underline{(i) $\implies$ (ii) $\implies$ (iii).} Analogous to the similarly numbered implications in \thref{sub-first-order-classifying-topos-existence}.

\underline{(iii) $\implies$ (iv).} Let $\kappa$ be such that $T$ is classically $\kappa$-locally small. Then by \thref{j_kappa-subseteq-j_negneg,j_negneg-subseteq-j_kappa} we have that $J_\kappa = J_{\neg \neg}$ on $\SynConsB_\kappa(T)$. So by the Comparison Lemma (see e.g.\ \cite[Theorem C2.2.3]{johnstone_sketches_2002_2}) we get
\[
\Sh(\SynB_\kappa(T), J_\kappa) \simeq \Sh(\SynConsB_\kappa(T), J_\kappa) = \Sh(\SynConsB_\kappa(T), J_{\neg \neg}),
\]
because we are just leaving out objects with an empty covering sieve. We thus see that $\Sh(\SynB_\kappa(T), J_\kappa)$ is Boolean.

\underline{(iv) $\implies$ (i).} By the continuous version of Diaconescu's theorem (\thref{continuous-diaconescu}) and \thref{functors-as-models} we have the following equivalence for Boolean toposes $\E$:
\[
\Topos(\E, \Sh(\SynB_\kappa(T), J_\kappa)) \simeq \Geom_\kappa(\SynB_\kappa(T), \E) \simeq \TMod(\E).
\]
Furthermore, the equivalence is given by sending a geometric morphism $f$ to the model $f^*(y(U^\kappa_T))$. We conclude by \thref{boolean-classifying-topos-iff-homomorphisms}.
\end{proof}
\begin{corollary}
\thlabel{classically-kappa-locally-small-corollaries}
Let $T$ be a classically $\kappa$-locally small theory, then any infinitary first-order formula is equivalent to a disjunction of $\kappa$-first-order formulas, modulo $T$ in the infinitary classical deduction-system.
\end{corollary}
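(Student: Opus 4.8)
The plan is to mirror the strategy indicated in the remark after \thref{kappa-sub-locally-small-corollaries}: combine the explicit construction of the Boolean classifying topos with strong completeness. Throughout I would take provability and equivalence in the infinitary classical deduction-system, modulo $T$. First I would invoke \thref{boolean-classifying-topos-existence}: since $T$ is classically $\kappa$-locally small, the chain (iii) $\implies$ (iv) $\implies$ (i) gives that $\SetB[T] = \Sh(\SynB_\kappa(T), J_\kappa)$ exists with generic model $G_T = y(U^\kappa_T)$. By strong completeness \thref{strong-completeness}(iv), an infinitary first-order sequent is classically $T$-provable iff it is valid in $G_T$; in particular two infinitary first-order formulas in the same context are equivalent iff they are interpreted as the same subobject of $X^{G_T}$ in $\SetB[T]$. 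So it suffices, given an infinitary first-order formula $\phi(x)$, to produce a disjunction of $\kappa$-first-order formulas with the same interpretation in $G_T$.

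Next I would analyse the subobject $\{x : \phi(x)\}^{G_T}$. Since $y$ preserves interpretations of sorts we have $X^{G_T} = y([\top(x)])$, so $\{x:\phi(x)\}^{G_T}$ is a subobject of a representable in the sheaf topos, and hence corresponds to a $J_\kappa$-closed sieve $S$ on $[\top(x)]$. As in the proof of \thref{yoneda-preserves-implication}, every $J_\kappa$-closed sieve is the join of the representable subobjects lying under it, so $S = \bigvee_{\alpha} y([\psi_\alpha(x)])$, where the $[\psi_\alpha(x)]$ range over those subobjects of $[\top(x)]$ in $\SynB_\kappa(T)$ that are $\leq S$. These are represented by $\kappa$-first-order formulas $\psi_\alpha(x)$, and they form a set by well-poweredness of the topos (indeed $< \kappa$ many up to equivalence, by the hypothesis on $T$).

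Finally I would translate this back into logic. As $\SynB_\kappa(T)$ is $\kappa$-Boolean, hence $\kappa$-Heyting, the functor $y$ is $\kappa$-Heyting (\thref{yoneda-geometric-sub-heyting}) and so preserves interpretations of $\kappa$-first-order formulas (\thref{functor-preserves-formulas}); thus $y([\psi_\alpha(x)]) = \{x:\psi_\alpha(x)\}^{G_T}$. Since the interpretation of an infinitary disjunction is the join of the interpretations of its disjuncts, I obtain
\[
\{x:\phi(x)\}^{G_T} = \bigvee_\alpha \{x:\psi_\alpha(x)\}^{G_T} = \Big\{ x : \bigvee_\alpha \psi_\alpha(x) \Big\}^{G_T}.
\]
By the equivalence recorded in the first paragraph, $\phi(x)$ is then classically $T$-provably equivalent to $\bigvee_\alpha \psi_\alpha(x)$, a disjunction of $\kappa$-first-order formulas, as required.

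I expect the only genuine subtlety to be the identification in the second paragraph, namely that the interpretation in $G_T$ of an \emph{arbitrary} infinitary first-order formula is a join of representable subobjects arising from $\kappa$-first-order formulas; this is precisely the step where one descends from the unbounded logic to the $\kappa$-bounded syntactic data. Once the closed-sieve description of subobjects of representables and the join-of-representables fact from \thref{yoneda-preserves-implication} are in hand, the remaining manipulations are routine bookkeeping with categorical semantics.
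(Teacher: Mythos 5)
Your proposal is correct and follows essentially the same route as the paper: the paper's proof is precisely to repeat the argument of \thref{theory-sfo-is-full-sub-first-order-theory-of-yoneda-syntactic-model}(iv) with $\SynB_\kappa(T)$ in place of $\SynSFO_\kappa(T)$ (subobjects of the generic model correspond to $J_\kappa$-closed sieves, which are joins of representables, i.e.\ of $\kappa$-first-order formulas) and then apply strong completeness \thref{strong-completeness}(iv), exactly as you do. Your write-up merely spells out in more detail the steps the paper leaves implicit (the identification $\SetB[T] = \Sh(\SynB_\kappa(T), J_\kappa)$ with $G_T = y(U^\kappa_T)$ via \thref{boolean-classifying-topos-existence}, and the closed-sieve decomposition from \thref{yoneda-preserves-implication}).
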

\begin{proof}
We can repeat the proof of \thref{theory-sfo-is-full-sub-first-order-theory-of-yoneda-syntactic-model}(iv), with $\SynB_\kappa(T)$ in the place of $\SynSFO_\kappa(T)$ and then apply strong completeness \thref{strong-completeness}.
\end{proof}
\begin{theorem}
\thlabel{boolean-topos-as-boolean-classifying-topos}
Every Boolean topos is the Boolean classifying topos of a geometric theory.
\end{theorem}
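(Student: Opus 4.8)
The plan is to combine the standard fact that every topos arises as the classifying topos of a geometric theory with \thref{boolean-classifying-topos-iff-homomorphisms}, which allows us to recognise a Boolean classifying topos by checking the defining equivalence only on Boolean toposes and only up to ordinary homomorphisms on the right-hand side.

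Let $\E$ be a Boolean topos. First I would invoke the standard result (e.g.\ \cite[Theorem 2.1.11]{caramello_theories_2017}, as already used in the proof of \thref{topos-as-sub-first-order-classifying-topos}) that there is a geometric theory $T$ with $\E \simeq \Set[T]$; concretely one may take $T$ to be the theory of geometric morphisms into $\E$. By definition of the ordinary classifying topos this comes equipped with a generic model $G_T$ in $\E$ and, for every topos $\F$, an equivalence $\Topos(\F, \E) \simeq \TMod(\F)$ given by $f \mapsto f^*(G_T)$ in the manner of \thref{classifying-toposes-def}.

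Next I would simply restrict attention to Boolean toposes $\F$. The equivalence $\Topos(\F, \E) \simeq \TMod(\F)$ holds for all toposes, hence in particular for all Boolean $\F$, and it is of exactly the shape demanded by \thref{boolean-classifying-topos-iff-homomorphisms}, being induced by the generic model $G_T$ as prescribed in \thref{classifying-toposes-def}. Noting that a geometric theory is in particular an infinitary first-order theory, I would then apply \thref{boolean-classifying-topos-iff-homomorphisms} (with the corollary's fixed Boolean topos taken to be our $\E$) to conclude that $\E = \SetB[T]$.

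The point worth emphasising, and the reason no genuine obstacle arises, is the contrast with \thref{topos-as-sub-first-order-classifying-topos}: there one had to enlarge $T$ by the sequents axiomatising sub-open geometric morphisms, whereas here every geometric morphism into the Boolean topos $\E$ is automatically open by \thref{boolean-iff-every-map-into-is-open}. Consequently the homomorphisms appearing on the right-hand side are automatically $\infty$-elementary, so $\TMod(\F)$ and $\TMod(\F)_\infty$ coincide for Boolean $\F$. This coincidence is precisely what \thref{boolean-classifying-topos-iff-homomorphisms} packages for us, and it is what lets the bare classifying-topos equivalence already serve as the Boolean classifying-topos equivalence, with no modification of $T$ required. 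The only subtle step is thus recognising that the corollary absorbs all of this bookkeeping; everything else is unwinding definitions.
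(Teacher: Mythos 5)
Your proposal is correct and is essentially the paper's own proof: the paper likewise takes $T$ to be a geometric theory with $\E \simeq \Set[T]$ (via \cite[Theorem 2.1.11]{caramello_theories_2017}) and then concludes immediately by \thref{boolean-classifying-topos-iff-homomorphisms}. The additional discussion you give about openness of geometric morphisms into a Boolean topos and the coincidence $\TMod(\F) = \TMod(\F)_\infty$ is exactly the content already packaged inside that corollary, so it is a faithful unwinding rather than a different route.
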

\begin{proof}
Let $\E$ be any Boolean topos and let $T$ be a geometric theory such that $\E \simeq \Set[T]$ (e.g.\ take $T$ to be the theory of geometric morphisms into $\E$, see for example \cite[Theorem 2.1.11]{caramello_theories_2017}). Then $\E$ is also the Boolean classifying topos of $T$, so $\E = \SetB[T]$, by \thref{boolean-classifying-topos-iff-homomorphisms}.
\end{proof}
The theorem below is close to \cite[Theorem 1]{blass_boolean_1983}, but there are some subtleties. We defer the discussion to \thref{comparison-to-blass-scedrov}.
\begin{theorem}
\thlabel{omega-categorical}
Let $T$ be an $\omega$-first-order theory. Then the following are equivalent:
\begin{enumerate}[label=(\roman*)]
\item in every context there are only finitely many infinitary first-order formulas, up to $T$-provable equivalence in the infinitary classical deduction-system;
\item $T$ is classically $\omega$-locally small;
\item (if the signature is countable) $T$ has finitely many completions, each of which is $\omega$-categorical (i.e.\ there is only one countable\footnote{Here a set is considered countable if it is finite or countably infinite. For $\omega$-categorical theories we normally only consider complete theories with an infinite model, but allowing the finite case does no harm because a complete theory with a finite model only has that one model (up to isomorphism).} model in $\Set$, up to isomorphism);
\item $\SetB[T]$ exists and is coherent.
\end{enumerate}
In particular, if $T$ is as above then every infinitary first-order formula is equivalent to a finitary one, modulo $T$ in the infinitary classical deduction-system.
\end{theorem}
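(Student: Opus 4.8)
The plan is to prove the equivalence of (i)--(iv) through a web of implications, isolating the one genuinely new infinitary input and the one serious obstacle. Throughout, deducibility and equivalence of formulas is taken with respect to the infinitary classical deduction-system, modulo $T$. The easiest link is (i)$\iff$(ii). Here (i)$\implies$(ii) is immediate, since finitary (i.e.\ $\omega$-first-order) formulas form a subclass of the infinitary ones. For the converse, if $T$ is classically $\omega$-locally small then \thref{classically-kappa-locally-small-corollaries} (with $\kappa = \omega$) says that every infinitary first-order formula is equivalent to a disjunction of finitary ones; since by (ii) there are only finitely many finitary formulas per context up to equivalence, such a disjunction is equivalent to a \emph{finite} one, hence to a single finitary formula. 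This simultaneously yields the final ``in particular'' clause and shows there are only finitely many infinitary formulas per context, giving (i).

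For (ii)$\iff$(iii) I would first reduce infinitary equivalence of finitary formulas to classical model theory. By \thref{kappa-deduction-is-infinitary} with $\kappa = \omega$, a finitary sequent is deducible from $T$ in the infinitary classical deduction-system exactly when it is deducible in the ordinary (finitary) classical system; by G\"odel completeness the latter holds exactly when the sequent is valid in every $\Set$-model of $T$. Thus (ii) asserts precisely that the classical Lindenbaum--Tarski algebra of finitary formulas in each context is finite, i.e.\ each type space $S_n(T)$ is finite. Now (ii)$\implies$(iii): finiteness of the sentence algebra gives only finitely many completions $T_1, \dots, T_m$ (the ultrafilters of a finite Boolean algebra), and each $T_i \supseteq T$ inherits finitely many formulas per context (its equivalence is coarser), whence each $T_i$ is $\omega$-categorical by the Ryll--Nardzewski theorem (the finite-model case being covered by the footnote). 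Conversely, for (iii)$\implies$(ii), a finitary sequent is $T$-provable iff valid in every $\Set$-model iff $T_i$-provable for every $i$; since each $\omega$-categorical $T_i$ has finitely many formulas per context, the number of $T$-classes is bounded by the finite product over $i$. Here one uses that the signature is countable, as flagged in (iii).

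The remaining equivalence (ii)$\iff$(iv) is where the real work sits. The forward direction (ii)$\implies$(iv) is clean: by \thref{boolean-classifying-topos-existence} we may take $\kappa = \omega$, so $\SetB[T] = \Sh(\SynB_\omega(T), J_\omega)$; as $\SynB_\omega(T)$ is finitely complete and $J_\omega$ is generated by \emph{finite} covering families, this is a coherent site and the topos is coherent. For the converse (iv)$\implies$(ii) the strategy is to show that each carrier $X^{G_T}$ of the generic model has a finite subobject lattice, for by \thref{strong-completeness}(iv) these lattices are exactly the algebras of infinitary first-order formulas per context, and their finiteness gives (i), hence (ii). The mechanism: since $\SetB[T]$ is Boolean, each $\Sub(X^{G_T})$ is a \emph{complete} Boolean algebra; if moreover $X^{G_T}$ is quasi-compact then its top is a compact element, and a complete Boolean algebra with compact top must be finite (an infinite one carries an infinite pairwise-disjoint family summing to the top, no finite subfamily of which reaches it). Applied to the terminal object $1$, coherent in any coherent topos, this already recovers finiteness of the sentence algebra, i.e.\ finitely many completions; the content of (iv)$\implies$(ii) is to promote this from $1$ to every sort.

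The main obstacle is therefore establishing that the carriers $X^{G_T}$ are quasi-compact (indeed coherent) objects of $\SetB[T]$. This is not automatic for arbitrary objects of a coherent topos --- infinite-set-like objects fail it --- so one must use that the $X^{G_T}$ are the sorts of a model living in a \emph{coherent} classifying topos. I would argue this by presenting $\SetB[T]$ via the coherent theory it classifies: the carriers of \emph{that} generic model generate and are closed under finite limits among the coherent objects, and the sorts $X^{G_T}$, being obtained from these by finite-limit (definable) constructions, inherit coherence. Deligne's theorem that coherent toposes have enough points then identifies these points with the $\Set$-models of $T$, keeping the argument aligned with the model-theoretic picture above, so that once finiteness of every $\Sub(X^{G_T})$ is secured, (ii) and (iii) follow as before. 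I expect this coherence-of-sorts step, rather than the Boolean-algebra bookkeeping, to be the crux of the proof.
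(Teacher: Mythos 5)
Your handling of (i)$\iff$(ii), (ii)$\iff$(iii) and (ii)$\implies$(iv) is correct and essentially the paper's own argument: (i)$\implies$(ii) is trivial, (ii)$\iff$(iii) is Ryll--Nardzewski together with \thref{kappa-deduction-is-infinitary} to pass between the finitary and infinitary classical systems, and (ii)$\implies$(iv) is the observation that $\SynB_\omega(T)$ with $J_\omega$ is a coherent site, via \thref{boolean-classifying-topos-existence} with $\kappa = \omega$. Your direct derivation of (ii)$\implies$(i) and of the final ``in particular'' clause from \thref{classically-kappa-locally-small-corollaries} is also sound; the paper routes these through (iv) instead, but the content is the same.

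The genuine gap is in (iv)$\implies$(i)/(ii), and you have located the crux precisely: everything reduces to showing that the carriers $X^{G_T}$ are quasi-compact objects of $\SetB[T]$, and this is \emph{not} automatic in a coherent topos, since a coherent topos has plenty of non-quasi-compact objects (infinite coproducts of atoms, infinite sets in $\Set$). You are in fact more careful here than the paper, which simply asserts that every subobject lattice in $\SetB[T]$ has quasi-compact top ``because $\SetB[T]$ is coherent'' --- exactly the over-claim you warn against. But your proposed repair does not close the gap. You argue that the sorts ``inherit coherence'' because they are ``obtained by finite-limit (definable) constructions'' from the coherent objects of a site presenting $\SetB[T]$. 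No such relationship is available: the coherent site whose existence is asserted by (iv) is a priori unrelated to $T$, and nothing in the definition of the Boolean classifying topos places the sorts of $G_T$ inside the pretopos of coherent objects --- in an atomic topos a sort is the \emph{join} of the (possibly infinitely many) atoms below it, a colimit rather than a finite-limit construction over the coherent generators, so there is no inheritance argument to run. Deligne's theorem does not rescue this either: enough points makes each $\Sub(X^{G_T})$ an atomic complete Boolean algebra, but atomicity places no bound on the number of atoms. So your proposal establishes, via quasi-compactness of $1$, that $T$ has finitely many completions, but the promotion to arbitrary sorts --- which you correctly identify as the real content, and which the paper dispatches with an unjustified parenthetical --- remains unproven in both your argument and the paper's.
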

\begin{proof}
\underline{(i) $\implies$ (ii).} Trivial.

\underline{(ii) $\Longleftrightarrow$ (iii).} Is a well-known model-theoretic result, called the Ryll-Nardzewski theorem (see e.g.\ \cite[Theorem 4.4.1]{marker_model_2006}). There is a subtlety that being classically $\omega$-locally small refers to the infinitary classical deduction-system, but since all formulas (and sequents) involved are finitary, \thref{kappa-deduction-is-infinitary} tells us that we may restrict to the $\omega$-classical deduction-system (i.e.\ the usual finitary classical deduction-system).

\underline{(ii) $\implies$ (iv).} By \thref{boolean-classifying-topos-existence} $\SetB[T]$ exists and is given by $\Sh(\SynB_\omega(T), J_\omega)$. Since $\SynB_\omega(T)$ is a coherent category and $J_\omega$ is exactly the coherent coverage topology, we have that $\SetB[T]$ is coherent.

\underline{(iv) $\implies$ (i).} Every subobject lattice in $\SetB[T]$ is a Boolean algebra whose top element is quasi-compact (because $\SetB[T]$ is coherent). Such Boolean algebras are finite. So the result follows from \thref{strong-completeness}.

The final sentence follows from \thref{classically-kappa-locally-small-corollaries}: any infinitary first-order formula is equivalent to a disjunction of finitary first-order formulas, and following the implication (iv) $\implies$ (i) these disjunctions are finite.
\end{proof}
\section{Relating the various classifying toposes}
\label{sec:relating-the-various-classifying-toposes}
Using the different local smallness conditions, conservativity of first-order logic over sub-first-order logic (\thref{first-order-conservative-over-sub-first-order}) and the fact that any infinitary first-order formula is classically equivalent to a sub-first-order one (\thref{sub-first-order-modulo-classical}), we have that the existence of $\SetFO[T]$ implies the existence of $\SetSFO[T]$, which implies the existence of $\SetB[T]$, for infinitary sub-first-order theories $T$. For infinitary first-order theories $T$ we still have that the existence of $\SetFO[T]$ implies the existence of $\SetB[T]$, for the same reasons. However, we can further relate these toposes by specifying how to construct one from another.
\begin{definition}[{\cite[above Proposition 3.6]{johnstone_open_1980}}]
\thlabel{boolean-core}
The \emph{Boolean core} of a topos $\E$ is the largest subobject $U$ of $1$ in $\E$ such that $\E/U$ is contained in $\Sh_{\neg \neg}(\E)$. We write $\B(\E) = \E/U$, where $U$ is the Boolean core of $\E$.
\end{definition}
\begin{fact}[{\cite[Above Proposition 3.6]{johnstone_open_1980}}]
\thlabel{open-sub-open-factoring-through-boolean-core-negneg}
Let $f: \E \to \F$ be a geometric morphism with $\E$ a Boolean topos. Then
\begin{enumerate}[label=(\roman*)]
\item $f$ is sub-open iff it factors through $\Sh_{\neg \neg}(\F) \hookrightarrow \F$;
\item $f$ is open iff it factors through $\B(\F) \hookrightarrow \F$.
\end{enumerate}
\end{fact}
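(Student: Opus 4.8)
The plan is to treat the two statements in parallel, using in each case the standard criterion for when a geometric morphism factors through a subtopos, together with the fact that the target subtoposes $\Sh_{\neg\neg}(\F)$ and $\B(\F)$ are Boolean. For Part (i), the ``if'' direction is the easy one: the inclusion $i: \Sh_{\neg\neg}(\F) \hookrightarrow \F$ has inverse image the $\neg\neg$-sheafification $a = i^*$, which preserves Heyting implication by \cite[Corollary 1.8]{johnstone_open_1980} (already invoked in the proof of \thref{boolean-topos-with-syntactic-model}), so $i$ is sub-open; and if $f = i \circ g$ then $g: \E \to \Sh_{\neg\neg}(\F)$ is a geometric morphism into a Boolean topos, hence sub-open by \thref{boolean-iff-every-map-into-is-open}, so the composite $f$ is sub-open. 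For the ``only if'' direction I would use that $f$ factors through the subtopos $\Sh_{\neg\neg}(\F)$ iff $f^*$ inverts every $\neg\neg$-dense monomorphism, i.e.\ every $A \rightarrowtail X$ with $\neg A = 0$. Since $f$ sub-open makes $f^*$ preserve $\neg$ (as $\neg A = (A \to 0)$), we get $\neg f^*(A) = f^*(\neg A) = 0$, and because $\E$ is Boolean this forces $f^*(A) = f^*(X)$. This is precisely the step that uses Booleanness of $\E$.

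The ``if'' direction of Part (ii) is entirely analogous: the Boolean core $\B(\F) = \F/U$ is an open subtopos of $\F$, so its inclusion $j$ is an open geometric morphism; if $f = j \circ g$ then $g: \E \to \B(\F)$ maps into the Boolean topos $\B(\F)$ and is open by \thref{boolean-iff-every-map-into-is-open}, so $f$ is open as a composite of open morphisms. Note that, just as in Part (i), neither ``if'' direction actually needs $\E$ to be Boolean.

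For the ``only if'' direction of Part (ii) I would use the open-subtopos criterion: $f$ factors through $\F/U$ iff $f^*$ inverts $U \rightarrowtail 1$, i.e.\ iff $f^*(U) \cong 1$. By \thref{boolean-core}, $U$ is the largest subterminal with $\F/U \subseteq \Sh_{\neg\neg}(\F)$, equivalently the largest subterminal on which $\F$ is Boolean. Since an open morphism is in particular sub-open (\thref{open-sub-open-geometric-morphism-is-heyting-sub-heyting}), Part (i) lets me factor $f = i \circ g$ through $\Sh_{\neg\neg}(\F)$, and the problem reduces to $g^*(a(U)) = f^*(U) \cong 1$. The key observation is that $a = i^*$ preserves $\neg$, $\exists$, $\vee$ and $\wedge$ but fails to commute with $\forall_{(-)}$, and that this failure is concentrated on the non-Boolean part $\neg U$: over $U$ one has $\forall = \neg \exists \neg$, which $a$ does preserve. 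Thus $f^* = g^* \circ a$ can be open only if $g^*$ annihilates $\neg U$; as $g^*$ preserves $\neg$ and $\E$ is Boolean, $g^*(\neg a(U)) = \neg g^*(a(U)) = 0$ yields $g^*(a(U)) = 1$, as wanted.

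The delicate point, and the main obstacle, is the last step: rigorously localising the defect of $a$ with respect to $\forall_{(-)}$ to the subterminal $\neg U$, and converting ``$f^* = g^* a$ is open'' into ``$g^*$ kills $\neg U$''. Concretely I would argue by contradiction: if $W := \neg f^*(U) \neq 0$ in the Boolean topos $\E$, then restricting along the open inclusion $\E/W \hookrightarrow \E$ produces an open morphism $h: \E/W \to \F$ from a nondegenerate Boolean topos with $h^*(U) = f^*(U) \wedge W = 0$, so that $h$ factors through the closed complement of the open subtopos $\F/U$. The maximality of $U$ in \thref{boolean-core} should be exactly what forbids this, since such an $h$ would exhibit a Boolean open part of $\F$ disjoint from, and hence strictly enlarging, $U$. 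Making this maximality argument precise is where the genuine work lies; the remaining bookkeeping in Parts (i) and (ii)-``if'' is routine.
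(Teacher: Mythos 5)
The paper does not actually prove this statement: it is imported verbatim as a Fact from Johnstone's paper on open maps (cited ``above Proposition 3.6''), so your attempt can only be judged on its own merits. On those merits, part (i) is correct and complete modulo standard facts: you use that a geometric morphism factors through $\Sh_j(\F)$ iff its inverse image inverts $j$-dense monomorphisms, that $\neg\neg$-sheafification preserves Heyting implication (Johnstone's Corollary 1.8, which the paper also invokes), and \thref{boolean-iff-every-map-into-is-open}; and you correctly isolate the single point where Booleanness of $\E$ enters ($\neg f^*(A)=0$ forces $f^*(A)=f^*(X)$). The ``if'' direction of (ii) is also fine: the inclusion of an open subtopos is \'etale, hence open, and $\B(\F)$ is Boolean since it is by definition a subtopos of the Boolean topos $\Sh_{\neg\neg}(\F)$.

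The genuine gap is the ``only if'' direction of (ii), exactly where you flagged it. Two problems. First, the heuristic that the failure of $a$ to commute with $\forall_{(-)}$ is ``concentrated on $\neg U$'' is not a proof step and is a dead end: nothing in your text converts openness of $f$ into a statement about $g^*$ and $\neg U$. Second, your contradiction argument stops precisely where the content lies: from the open morphism $h: \E/W \to \F$ with $h^*(U)=0$ and $\E/W$ a nondegenerate Boolean topos, you must \emph{exhibit} a nonzero subterminal $V$ of $\F$ with $V \wedge U = 0$ and $\F/V \subseteq \Sh_{\neg\neg}(\F)$; knowing that $h$ lands in the closed complement of $\F/U$ produces no such $V$ by itself. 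The missing ingredient is the open image: an open geometric morphism factors as an open \emph{surjection} followed by an \emph{open inclusion} $\F/V \hookrightarrow \F$ (a result from the same paper of Johnstone), and an open surjection out of a Boolean topos has Boolean codomain, because its inverse image is conservative (faithful plus toposes being balanced) and preserves $\neg$, hence reflects $A \vee \neg A = X$. Granting this, the cleanest route avoids contradiction entirely: apply the factorization to $f$ itself to get a Boolean open subtopos $\F/V$; then observe that for a subterminal $V$, $\F/V$ is Boolean iff $\F/V \subseteq \Sh_{\neg\neg}(\F)$ (if $\neg_X A = 0$, then $A \wedge (V \times X)$ has zero negation in the Boolean algebra $\Sub_\F(V \times X)$, hence equals $V \times X$), so the maximality of $U$ in \thref{boolean-core} gives $V \leq U$, and $f$ factors through $\B(\F)$. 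Without the open-image factorization and the descent of Booleanness along open surjections, the appeal to maximality of $U$ cannot be made to bite.
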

\begin{theorem}
\thlabel{relating-classifying-toposes}
Let $T$ be an infinitary sub-first-order theory.
\begin{enumerate}[label=(\roman*)]
\item If $\SetFO[T]$ exists then $\SetSFO[T] = \SetFO[T]$ (in particular, it exists) and the generic model is the same.
\item If $\SetSFO[T]$ exists then $\SetB[T] = \Sh_{\neg \neg}(\SetSFO[T])$ (in particular, it exists) and the Boolean generic model is the double negation sheafification of $G_T$ in $\SetSFO[T]$.
\end{enumerate}
Furthermore, for any infinitary first-order theory $T$ we have:
\begin{enumerate}[label=(\roman*)]
\setcounter{enumi}{2}
\item If $\SetFO[T]$ exists then $\SetB[T] = \B(\SetFO[T])$ (in particular, it exists) and the Boolean generic model is $a(G_T)$, where $a$ is the inverse image part of the inclusion $\B(\SetFO[T]) \hookrightarrow \SetFO[T]$.
\end{enumerate}
\end{theorem}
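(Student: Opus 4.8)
The plan is, in each case, to equip the proposed topos with the stated generic model and check directly that it has the universal property of the relevant classifying topos. For (ii) and (iii) this is a quick computation resting on the factorisation criterion \thref{open-sub-open-factoring-through-boolean-core-negneg}; for (i) it requires genuinely comparing the two syntactic sites, and that is where the content sits. I begin with (iii). Write $j\colon\B(\SetFO[T])\hookrightarrow\SetFO[T]$ for the inclusion of the Boolean core and $a=j^{*}$. Since $\B(\SetFO[T])$ is Boolean and $j$ trivially factors through $\B(\SetFO[T])$, \thref{open-sub-open-factoring-through-boolean-core-negneg}(ii) shows $j$ is open, so $a$ is $\infty$-Heyting and $a(G_{T})$ is a model of the first-order theory $T$ in the Boolean topos $\B(\SetFO[T])$. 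For any Boolean $\E$, composing with $j$ and applying \thref{open-sub-open-factoring-through-boolean-core-negneg}(ii) once more gives $\Topos(\E,\B(\SetFO[T]))\simeq\Open(\E,\SetFO[T])\simeq\TMod(\E)_{\infty}$, and tracing the generic model identifies the equivalence with $f\mapsto f^{*}(a(G_{T}))$. As the source topos is Boolean, this is exactly the defining property of $\SetB[T]$.

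Part (ii) is formally parallel, using \thref{open-sub-open-factoring-through-boolean-core-negneg}(i) and the subtopos $i\colon\Sh_{\neg\neg}(\SetSFO[T])\hookrightarrow\SetSFO[T]$, with $a=i^{*}$ now $\infty$-sub-Heyting so that $a(G_{T})$ is a model of the sub-first-order theory $T$. For Boolean $\E$ one obtains $\Topos(\E,\Sh_{\neg\neg}(\SetSFO[T]))\simeq\SubOpen(\E,\SetSFO[T])\simeq\TMod(\E)_{\infty\to}$. The only extra point is that over a Boolean $\E$ one has $\TMod(\E)_{\infty\to}=\TMod(\E)_{\infty}$: by \thref{sub-first-order-modulo-classical} every first-order formula is, modulo the classical deduction-system, equivalent to a sub-first-order one, and a Boolean topos validates classical logic, so a sub-elementary morphism is automatically elementary. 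This matches the universal property of $\SetB[T]$, with Boolean generic model $a(G_{T})$ as claimed.

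For (i), since $T$ is sub-first-order, conservativity (\thref{first-order-conservative-over-sub-first-order}) says that first-order and sub-first-order provable equivalence coincide on sub-first-order formulas; hence the local smallness of $T$ furnished by the existence of $\SetFO[T]$ implies sub-local smallness, and $\SetSFO[T]$ exists by \thref{sub-first-order-classifying-topos-existence}. Writing $G^{\fo}$ and $G^{\sfo}$ for the two generic models, the two universal properties produce comparison morphisms: the sub-first-order model $G^{\fo}$ yields a sub-open $p\colon\SetFO[T]\to\SetSFO[T]$ with $p^{*}G^{\sfo}\cong G^{\fo}$, and the model $G^{\sfo}$ yields an open $q\colon\SetSFO[T]\to\SetFO[T]$ with $q^{*}G^{\fo}\cong G^{\sfo}$. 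Both $p^{*}$ and $q^{*}$ carry sub-first-order representables to sub-first-order representables, so $q^{*}p^{*}$ fixes every $y[\chi]$ with $\chi$ sub-first-order; as these generate $\Sh(\SynSFO_\kappa(T),J_\kappa)$ and $q^{*}p^{*}$ is cocontinuous, $pq\simeq\mathrm{id}_{\SetSFO[T]}$.

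The main obstacle is the reverse composite. A priori $qp$ is only sub-open, and $(qp)^{*}=p^{*}q^{*}$ fixes the sub-first-order representables but sends a genuinely first-order representable $y[\forall y\,\psi]$ to $p^{*}\{x:\forall y\,\psi\}^{G^{\sfo}}$. In the sheaf topos the universal quantifier of a representable is a join of representables below it,
\[
\{x:\forall y\,\psi\}^{G^{\sfo}}=\bigvee\{\,y[\chi]:\chi\ \text{sub-first-order},\ \chi\vdash_{x,y}\psi\,\},
\]
so proving $qp\simeq\mathrm{id}_{\SetFO[T]}$ is equivalent to showing that every infinitary first-order formula is, modulo $T$ in the first-order deduction-system, equivalent to a disjunction of sub-first-order formulas; equivalently, that the open comparison $q$ is a \emph{surjection}, so that $q^{*}$ is conservative and reflects the identity $q^{*}y[\forall y\,\psi]=q^{*}\bigl(\bigvee y[\chi]\bigr)$ back to $\SetFO[T]$. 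This collapse of first-order to sub-first-order logic is precisely where the local smallness hypothesis behind the existence of $\SetFO[T]$ must be used in an essential way, and I expect it to be the crux of the argument; once it is established, $p$ and $q$ are mutually inverse equivalences matching $G^{\fo}$ with $G^{\sfo}$, giving $\SetSFO[T]=\SetFO[T]$ with the same generic model.
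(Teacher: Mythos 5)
Parts (ii) and (iii) of your proposal are correct and essentially coincide with the paper's own proof: for Boolean $\E$ one identifies $\Topos(\E,\Sh_{\neg\neg}(\SetSFO[T]))$ with $\SubOpen(\E,\SetSFO[T])$ (resp.\ $\Topos(\E,\B(\SetFO[T]))$ with $\Open(\E,\SetFO[T])$) using \thref{open-sub-open-factoring-through-boolean-core-negneg} and full faithfulness of the inclusions, and then quotes the defining universal property. Your observation that $\TMod(\E)_{\infty\to}=\TMod(\E)_\infty$ over a Boolean base, via \thref{sub-first-order-modulo-classical} and soundness of classical logic in Boolean toposes, is a perfectly acceptable replacement for the paper's appeal to the argument of \thref{boolean-classifying-topos-iff-homomorphisms}.

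Part (i) contains a genuine gap, which you flag yourself: you never prove $qp\simeq\mathrm{id}_{\SetFO[T]}$. You reformulate it, correctly, as the claim that every infinitary first-order formula is $T$-provably equivalent, in the infinitary first-order deduction-system, to a disjunction of sub-first-order formulas, and then write that you ``expect it to be the crux''. But that reformulation is exactly where the whole difficulty lives, and nothing in your proposal supplies it: local smallness plus conservativity (\thref{first-order-conservative-over-sub-first-order}) gives the existence of $\SetSFO[T]$, but it does not give the collapse, and in the paper this collapse is only available \emph{after} (i) is known (via \thref{strong-completeness}), not as an input to it. The structural reason your comparison-morphism strategy stalls is the asymmetry of the two universal properties: $\SetSFO[T]$ classifies sub-open morphisms, so the sub-open composite $pq$ can be compared against the identity, but the composite $qp$ into $\SetFO[T]$ is a priori only sub-open, and no available universal property applies to it. (A smaller point: your argument for $pq\simeq\mathrm{id}$ --- generators fixed up to isomorphism plus cocontinuity --- is not sound as stated, since unnatural isomorphisms on a generating family do not assemble into an isomorphism of functors; the correct argument is that $pq$ is sub-open with $(pq)^{*}G^\sfo\cong G^\sfo$, so the classifying equivalence of $\SetSFO[T]$ forces $pq\simeq\mathrm{id}$.)

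The paper's proof of (i) dissolves this asymmetry instead of fighting it, and never constructs comparison morphisms. It proves two statements. First, every sub-open geometric morphism $f\colon\E\to\SetFO[T]$ is open: here it is crucial that $T$ is sub-first-order, so that $f^{*}(G_T)$ is still a model of $T$; this model corresponds under the classifying property of $\SetFO[T]$ to an open geometric morphism, and since the classifying equivalence reflects isomorphisms, $f$ is isomorphic to that open morphism, hence open. Second, every $\infty$-sub-elementary morphism between models of $T$ (in any topos) is $\infty$-elementary, by the argument of \thref{boolean-classifying-topos-iff-homomorphisms}: such a morphism corresponds to a natural transformation between the inverse image parts of geometric morphisms which, by the first statement, are open, hence $\infty$-Heyting, and naturality at interpretations of first-order formulas yields elementarity. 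With these two facts, $\SubOpen(\E,\SetFO[T])=\Open(\E,\SetFO[T])\simeq\TMod(\E)_\infty=\TMod(\E)_{\infty\to}$ for every topos $\E$, which is literally the defining property of $\SetSFO[T]$ with the same generic model; the logical collapse you were trying to use as an ingredient then falls out as a corollary.
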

\begin{proof}
We prove each item separately.
\begin{enumerate}[label=(\roman*)]
\item Every sub-open geometric morphism into $\SetFO[T]$ is open: for such a sub-open geometric morphism $f$ we have that $f^*(G_T)$ is a model of $T$ and must thus correspond to an open geometric morphism. By an argument similar to \thref{boolean-classifying-topos-iff-homomorphisms} we have that $\infty$-sub-Heyting morphisms between models of $T$ (in any topos) are $\infty$-Heyting. The result follows.
\item Let $i: \Sh_{\neg \neg}(\SetSFO[T]) \hookrightarrow \SetSFO[T]$ denote the inclusion functor and let $a$ be the associated sheafification functor. For Boolean toposes $\E$ then have equivalences
\[
\Topos(\E, \Sh_{\neg \neg}(\SetSFO[T])) \simeq
\SubOpen(\E, \SetSFO[T]) \simeq
\TMod(\E)_{\infty \to},
\]
where the first equivalence follows from \thref{open-sub-open-factoring-through-boolean-core-negneg} and the fact that $i$ is full and faithful. Taking the generic model in $\Sh_{\neg \neg}(\SetSFO[T])$ to be $a(G_T)$, we have by (the argument of) \thref{boolean-classifying-topos-iff-homomorphisms} that this is indeed the Boolean classifying topos.
\item Analogous to item (ii).
\end{enumerate}
\end{proof}
\begin{example}
\thlabel{sub-first-order-classifying-but-not-first-order}
As discussed, we have that the existence of $\SetFO[T]$ implies the existence of $\SetSFO[T]$, which implies the existence of $\SetB[T]$ and ultimately $\Set[T]$ always exists (of course, for theories $T$ in the relevant logic). None of these implications can be reversed, as we establish here.

In \thref{classifying-toposes-counterexamples} parts (i) and (ii) we saw that taking $T$ to be the propositional theory with two propositional variables and no further axioms has no sub-first-order or first-order classifying topos. On the other hand, the free Boolean algebra on two generators is finite and hence complete, so $T$ is classically locally small, and so $\SetB[T]$ exists.

In part (iii) of \thref{classifying-toposes-counterexamples} we saw that if we take a countable infinity of propositional variables instead, the Boolean classifying topos cannot exist. Since the theory has no axioms, it is in particular geometric.

We finish by giving a class of infinitary sub-first-order theories $T$ for which $\SetSFO[T]$ exists, but that do not have a first-order classifying topos. Let $M$ be any monoid that is not a group. By \thref{topos-as-sub-first-order-classifying-topos} there is an infinitary sub-first-order theory $T$ such that $\Set^{M^\op} = \SetSFO[T]$. We claim that $T$ cannot have a first-order classifying topos. The topos $\Set^{M^\op}$ is two-valued and not Boolean \cite[page 274]{maclane_sheaves_1994}, so $\B(\Set^{M^\op})$ is the trivial topos. We thus have that $\Sh_{\neg \neg}(\Set^{M^\op}) \not \simeq \B(\Set^{M^\op})$. If $\SetFO[T]$ were to exist then by \thref{relating-classifying-toposes}(i) we would have $\SetFO[T] = \SetSFO[T] = \Set^{M^\op}$, but by parts (ii) and (iii) of that same theorem we would then have that $\SetB[T]$ is both $\Sh_{\neg \neg}(\Set^{M^\op})$ and $\B(\Set^{M^\op})$. This is a contradiction, so $T$ cannot have a first-order classifying topos.
\end{example}
\begin{theorem}
\thlabel{for-geometric-theories}
Let $T$ be a geometric theory, then the following are equivalent:
\begin{enumerate}[label=(\roman*)]
\item $\Set[T]$ is Boolean;
\item $\Set[T] \simeq \SetSFO[T] \simeq \SetFO[T] \simeq \SetB[T]$ (in particular, they all exist) and their generic models coincide;
\item at least one of $\SetSFO[T]$ or $\SetFO[T]$ exists;
\item there is a cardinal $\kappa$ such that $T$ is classically $\kappa$-locally small and every $\kappa$-first-order formula is equivalent to a $\kappa$-geometric formula modulo $T$ in the infinitary classical deduction-system.
\end{enumerate}
\end{theorem}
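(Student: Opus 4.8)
The plan is to establish the cycle (i)$\Rightarrow$(ii)$\Rightarrow$(iii)$\Rightarrow$(i) together with (i)$\Leftrightarrow$(iv). The implications (i)$\Rightarrow$(ii) and (ii)$\Rightarrow$(iii) are the routine ones. For (i)$\Rightarrow$(ii), if $\Set[T]$ is Boolean then by \thref{boolean-iff-every-map-into-is-open} every geometric morphism into it is open and sub-open, so for every topos $\E$ the inclusions $\Open(\E,\Set[T]) = \SubOpen(\E,\Set[T]) = \Topos(\E,\Set[T])$ are all equalities; moreover, exactly as in \thref{boolean-classifying-topos-iff-homomorphisms}, the inverse image parts being $\infty$-Heyting forces every homomorphism of models of $T$ to be $\infty$-(sub-)elementary, so $\TMod(\E) = \TMod(\E)_{\infty \to} = \TMod(\E)_\infty$. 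Hence $\Set[T]$ with its generic model simultaneously satisfies the defining properties of $\SetSFO[T]$ and $\SetFO[T]$, and (restricting to Boolean $\E$) of $\SetB[T]$, yielding (ii); then (ii)$\Rightarrow$(iii) is immediate.

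The heart of the theorem is (iii)$\Rightarrow$(i), which I would prove by contraposition, and this is where I expect the main obstacle to lie. By \thref{relating-classifying-toposes}(i) the existence of $\SetFO[T]$ already forces that of $\SetSFO[T]$, so it suffices to show that if $\Set[T]$ is \emph{not} Boolean then $T$ is not sub-locally small, whence $\SetSFO[T]$ cannot exist by \thref{sub-first-order-classifying-topos-existence}. Since $T$ is geometric, every subobject of the generic model in $\Set[T]$ is the interpretation of a geometric formula, so a failure of Booleanness produces geometric formulas whose negations are genuinely non-geometric; because the axioms of $T$ contain no implications, these implications are ``free''. I would make this precise by reproducing the mechanism of \thref{classifying-toposes-counterexamples}(i): from the non-classical behaviour I would extract two independent non-complemented geometric formulas, equivalently a sufficiently faithful geometric morphism onto the classifying topos of the empty two-variable propositional theory, and then, for each cardinal $\kappa$, transport the de Jongh proper class of \thref{proper-class-of-intuitionistic-formulas} to obtain a model of $T$ in which at least $\kappa$ sub-first-order formulas in a fixed context take pairwise distinct values. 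Were $\SetSFO[T]$ to exist, its generic model would then carry a proper class of subobjects in that context, which is impossible. The delicate point is precisely realising two free propositional generators inside the sub-first-order formulas of $T$ from the mere failure of Booleanness, and verifying that the transported formulas remain pairwise inequivalent modulo $T$.

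For the equivalence with (iv), the forward implication (i)$\Rightarrow$(iv) follows from (i)$\Rightarrow$(ii): there $\Set[T] = \SetB[T] = \SetFO[T]$ with common generic model $G_T$, so existence of $\SetB[T]$ gives classical $\kappa$-local smallness for some $\kappa$ by \thref{boolean-classifying-topos-existence}, and since every subobject of $G_T$ in $\Set[T] = \Sh(\SynG_\kappa(T), J_\kappa)$ is a $\kappa$-geometric formula, any $\kappa$-first-order $\phi$ is interpreted as some $\kappa$-geometric $\chi$; as $\Set[T] = \SetB[T]$, strong completeness \thref{strong-completeness}(iv) turns this into a classical provable equivalence $\phi \dashv\vdash \chi$, which is exactly the collapse demanded in (iv).

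The remaining implication (iv)$\Rightarrow$(i) I would deduce using one auxiliary input: the conservativity of the classical deduction-system over the geometric one for geometric sequents modulo a geometric theory. This follows from Barr's theorem, that every topos admits a surjection from a Boolean one, combined with \thref{classical-completeness-theorem} and \thref{intuitionistic-completeness-theorems}, since such a surjection reflects validity of geometric sequents. Granting this, part (b) of (iv) provides, for each geometric $\phi$, a geometric $\psi$ that is classically its complement; conservativity upgrades $\phi \wedge \psi \vdash \bot$ and $\top \vdash \phi \vee \psi$ to geometric theorems modulo $T$, so every geometric formula is complemented. Thus the consistent part of $\SynG_\kappa(T)$ has Boolean subobject posets, and rerunning the argument of \thref{j_kappa-subseteq-j_negneg} and \thref{j_negneg-subseteq-j_kappa} gives $J_\kappa = J_{\neg\neg}$ there, so that $\Set[T]$ is Boolean. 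Finally, the closing sentence of the theorem is read off from the implication (iv)$\Rightarrow$(i): under (iv) every infinitary first-order formula is classically equivalent to a geometric, hence (via the now-Boolean $\Set[T]$) to a finitary one when the base logic is finitary.
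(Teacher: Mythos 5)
Your implications (i)$\Rightarrow$(ii) and (ii)$\Rightarrow$(iii) match the paper, and your (i)$\Rightarrow$(iv) is essentially the paper's (ii)$\Rightarrow$(iv). Your (iv)$\Rightarrow$(i) takes a genuinely different route: the paper uses Barr's theorem to show that the evident functor $\SynG_\kappa(T) \to \SynB_\kappa(T)$ is an equivalence and then invokes \thref{boolean-classifying-topos-existence}, whereas you Booleanise the subobject lattices of $\SynG_\kappa(T)$ directly and re-run \thref{j_kappa-subseteq-j_negneg,j_negneg-subseteq-j_kappa} on its consistent part. That variant looks workable (note that before the second lemma applies you must also transfer classical $\kappa$-local smallness to \emph{geometric} local smallness, again via Barr conservativity), but it redoes by hand what the paper obtains by passing to $\SynB_\kappa(T)$.

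The genuine gap is in (iii)$\Rightarrow$(i), which you rightly identify as the heart. Your plan --- from non-Booleanness of $\Set[T]$ extract two ``independent'' non-complemented geometric formulas and transport the proper class of \thref{proper-class-of-intuitionistic-formulas} to contradict sub-local smallness --- cannot work as stated. First, non-Booleanness only yields \emph{one} non-complemented subobject; there need not be two. Concretely, let $T$ be the empty theory over the signature with a single propositional variable: $\Set[T]$ is the Sierpi\'nski topos, which is not Boolean, yet the language has only one propositional generator, so there is nothing to which the two-variable fact can be transported. Second, even when two non-complemented formulas exist, ``non-complemented'' is far weaker than ``jointly free'': their values in models of $T$ are constrained by $T$, so the transported formulas may collapse modulo $T$; the mechanism of \thref{classifying-toposes-counterexamples}(i) works only because there $T$ itself is the free two-variable theory. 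Indeed, the theorem together with \thref{sub-first-order-classifying-topos-existence} \emph{implies} that already one propositional variable carries a proper class of pairwise inequivalent infinitary intuitionistic formulas --- a strictly stronger statement than the two-variable fact you want to reduce to, which is a clear sign the reduction cannot succeed. The paper instead proves (iii)$\Rightarrow$(i) by a purely formal universal-property argument: the generic models give $f: \Set[T] \to \SetSFO[T]$ (sub-open) and $g: \SetSFO[T] \to \Set[T]$ classifying each other's generic models; the composites $gf$ and $fg$ classify the generic models themselves and are therefore isomorphic to identities, so $\Set[T] \simeq \SetSFO[T]$; then every geometric morphism into $\Set[T]$ corresponds to a model of $T$, hence to a sub-open geometric morphism into $\SetSFO[T] \simeq \Set[T]$, and \thref{boolean-iff-every-map-into-is-open} gives Booleanness. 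You should replace your contrapositive argument with this one.
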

\begin{proof}
\underline{(i) $\implies$ (ii).} If $\Set[T]$ is Boolean then every geometric morphism into it is open by \thref{boolean-iff-every-map-into-is-open}. So we have
\[
\Open(\E, \Set[T]) = \SubOpen(\E, \Set[T]) = \Topos(\E, \Set[T]) \simeq \TMod(\E).
\]
By a similar argument to \thref{boolean-classifying-topos-iff-homomorphisms} we have $\TMod(\E) = \TMod(\E)_{\infty \to} = \TMod(\E)_\infty$, which finishes the proof.

\underline{(ii) $\implies$ (iii).} Trivial.

\underline{(iii) $\implies$ (i).} Suppose that $\SetSFO[T]$ exists (the other case is analogous). Denote by $G_T$ and $G_T^\sfo$ the generic models of $T$ in $\Set[T]$ and $\SetSFO[T]$ respectively. By definition we then find geometric morphisms $f: \Set[T] \to \SetSFO[T]$ and $g: \SetSFO[T] \to \Set[T]$ corresponding to $G_T$ and $G_T^\sfo$ respectively. That is, $G_T \cong f^*(G_T^\sfo)$ and $G_T^\sfo \cong g^*(G_T)$. We thus have $G_T \cong f^* g^*(G_T)$ and $G_T^\sfo \cong g^* f^*(G_T^\sfo)$. So $G_T$, as a model of $T$ in $\Set[T]$, corresponds to both $gf: \Set[T] \to \Set[T]$ and the identity $Id: \Set[T] \to \Set[T]$. Thus $gf$ must be naturally isomorphic to the identity. Similarly, we find that $fg$ must be naturally isomorphic to the identity on $\SetSFO[T]$. We conclude that $f$ and $g$ form an equivalence of categories, and so $\Set[T] \simeq \SetSFO[T]$, and furthermore this equivalence sends the generic models to one another.

As geometric morphisms $\E \to \Set[T]$ correspond to models of $T$ in $\E$, which again correspond to sub-open geometric morphisms $\E \to \SetSFO[T] \simeq \Set[T]$, we have that every geometric morphism into $\Set[T]$ is sub-open. So by \thref{boolean-iff-every-map-into-is-open} we have that $\Set[T]$ is Boolean (for the case of $\SetFO[T]$ the argument works the same, but with open geometric morphisms).

\underline{(ii) $\implies$ (iv).} Pick $\kappa$ such that $T$ is $\kappa$-geometric and strictly bigger than the cardinality of all subobject lattices in $G_T$. As $G_T$ is also the generic model in $\SetB[T]$ we have that $T$ is classically $\kappa$-locally small by (the proof of) \thref{boolean-classifying-topos-existence}. Subobjects of $G_T$ in $\Set[T]$ correspond to (interpretations) of geometric formulas (this is well known, but see the proof of \thref{theory-sfo-is-full-sub-first-order-theory-of-yoneda-syntactic-model}(iv) for example), so every infinitary first-order formula is represented by a subobject that also represents a geometric formula. Using again that $G_T$ is also the generic model in $\SetB[T]$, we thus have by \thref{strong-completeness} that every infinitary first-order formula is equivalent to a geometric formula modulo $T$ in the infinitary classical deduction-system. By our choice of $\kappa$ this geometric formula is $\kappa$-geometric.

\underline{(iv) $\implies$ (i).} There is an obvious functor $F: \SynG_\kappa(T) \to \SynB_\kappa(T)$. By Barr's theorem we have that if a geometric sequent can be deduced from a geometric theory in the infinitary classical deduction-system then it can already be deduced in the geometric deduction-system (see e.g.\ \cite[Proposition D3.1.16]{johnstone_sketches_2002_2}). It follows that $F$ is faithful. By our assumption that every $\kappa$-first-order formula is equivalent to a $\kappa$-geometric formula we have that $F$ is also full and essentially surjective (checking that $F$ is full requires Barr's theorem again). We conclude that $F$ is an equivalence of categories, and hence
\[
\Set[T] \simeq \Sh(\SynG_\kappa(T), J_\kappa) \simeq \Sh(\SynB_\kappa(T), J_\kappa) \simeq \SetB[T],
\]
where the final equivalence follows from \thref{boolean-classifying-topos-existence} since $T$ is classically $\kappa$-locally small.
\end{proof}
\begin{remark}
\thlabel{comparison-to-blass-scedrov}
We recall \cite[Theorem 1]{blass_boolean_1983} in our terminology: let $T$ be a coherent theory (i.e.\ $\omega$-geometric), then its classifying topos $\Set[T]$ is Boolean iff $T$ is classically $\omega$-locally small and every finitary first-order formula is equivalent to a coherent formula modulo $T$ in the finitary classical deduction-system.

As mentioned before, the above is close to our \thref{omega-categorical}. There are some subtle differences: \thref{omega-categorical} is about $\omega$-first-order theories and their Boolean classifying topos, whereas the above is about coherent theories and their classifying topos.

However, we can overcome these differences as follows. Firstly, for $T$ an $\omega$-first-order theory we can construct a coherent theory $T'$ that is Morita equivalent (i.e.\ they have equivalent categories of models in every topos) to $T$ by Morleyising (i.e.\ we introduce a relation symbol for every finitary first-order formula and declare it to be equivalent to that formula). It should be clear that $T$ satisfies the equivalent conditions in \thref{omega-categorical} iff $T'$ satisfies these conditions. Secondly, by \thref{for-geometric-theories} we have that $\Set[T']$ is Boolean iff $\Set[T'] \simeq \SetB[T]$.

We thus see that the equivalence of conditions (ii) and (iv) in \thref{omega-categorical} is equivalent to \cite[Theorem 1]{blass_boolean_1983}.

However, \thref{omega-categorical} also tells us that these conditions are equivalent to having only finitely many infinitary first-order formulas (in a fixed context, up to classical provable equivalence modulo the theory), and in fact that every such formula is equivalent to a finitary one. This conclusion is only possible because of the classical completeness theorem \thref{classical-completeness-theorem}.

Finally, we note that \thref{for-geometric-theories} can be seen as an infinitary version of \cite[Theorem 1]{blass_boolean_1983}.
\end{remark}

\bibliographystyle{alpha}
\bibliography{bibfile}


\end{document}